\newtheorem{theorem}{Theorem}[section]
\newtheorem{teo}[theorem]{Theorem}
\newtheorem{lm}[theorem]{Lemma}
\newtheorem{tr}[theorem]{Theorem}
\newtheorem{cor}[theorem]{Corollary}
\newtheorem{df}[theorem]{Definition}
\newtheorem{rem}[theorem]{Remark}
\newtheorem{ex}[theorem]{Example}
\newtheorem{pr}[theorem]{Proposition}
\newcommand{\la}{\lambda}
\begin{document}
\title[Even-primitive vectors in induced supermodules]{Even-primitive vectors in induced supermodules for general linear supergroups and in costandard supermodules for Schur superalgebras}

\author{Franti\v sek~ Marko}

\email{fxm13@psu.edu}

\address{Penn State Hazleton, 76 University Drive, Hazleton, PA 18202, USA}

\begin{abstract} Let $G=GL(m|n)$ be the general linear supergroup over an algebraically closed field $K$ of characteristic zero
and let $G_{ev}=GL(m)\times GL(n)$  be its even subsupergroup. The induced supermodule $H^0_G(\la)$, corresponding to a dominant weight $\la$ of $G$,  
can be represented as $H^0_{G_{ev}}(\la)\otimes \Lambda(Y)$, 
where $Y=V_m^*\otimes V_n$ is a tensor product of the dual of the natural $GL(m)$-module $V_m$ and the natural $GL(n)$-module $V_n$, and $\Lambda(Y)$ is 
the exterior algebra of $Y$.
For a dominant weight $\la$ of $G$, we construct explicit $G_{ev}$-primitive vectors in $H^0_G(\la)$. Related to this, we give explicit formulas for $G_{ev}$-primitive vectors of the supermodules $H^0_{G_{ev}}(\la)\otimes \otimes^k Y$.
Finally, we describe a basis of $G_{ev}$-primitive vectors in the largest polynomial subsupermodule $\nabla(\la)$ of $H^0_G(\la)$ (and therefore in the costandard supermodule of the corresponding Schur superalgebra $S(m|n)$). This yields a description of a basis of $G_{ev}$-primitive vectors in arbitrary induced supermodule $H^0_G(\la)$.
\end{abstract}

\keywords{general linear supergroup, primitive vectors, Schur superalgebra}
\subjclass[2010]{15A15 (primary) 17A70, 20G05, 15A72, 13A50, 05E15 (secondary)} 
\maketitle
\section*{Introduction}

Throughout the paper, let $G=GL(m|n)$ be the general linear supergroup defined over an algebraically closed field $K$  of characteristic zero and 
$G_{ev}=GL(m)\times GL(n)$ be its even subsupergroup.

In our earlier paper \cite{fm}, we gave explicit formulae for certain $G_{ev}$-primitive (or shortly, even-primitive) vectors $\pi_{I|J}$. 
These vectors form a basis of even-primitive vectors of $H^0_G(\la)$ in some special weight spaces (which are described using the concept of robustness) - see Theorem 4.4 of \cite{fm}.

The main obstacle is that vectors $\pi_{I|J}$ do not belong to $H^0_G(\la)$ in general. In the comments following Theorem 4.4 of \cite{fm} we have discussed the possibility that all even-primitive vectors in $H^0_G(\la)$ could be written as a certain linear combination of vectors $\pi_{I|J}$. 
One of the purposes of this paper is to confirm this speculation for even-primitive vectors in the largest polynomial subsupermodule $\nabla(\la)$ of $H^0_G(\la)$.
We also give explicit formulae for certain even-primitive vectors in induced $G$-supermodules $H^0_G(\la)$ and describe an explicit basis of even-primitive vectors of 
$\nabla(\la)$.
Since, for polynomial weights $\la$, the multiplicity of even-primitive vectors is given by certain Littlewood-Richardson coefficients, one can view the results of this paper as an "algebraisation" of these combinatorial quantities.

The combinatorial techniques which we use are related to Young tableaux, and are appropriate for the description of $\nabla(\la)$. The supermodules $\nabla(\la)$ are costandard supermodules in a Schur superalgebra $S(m|n,r)$ of an  appropriate degree $r$, which are of independent interest, and our results bring an understanding of their $G_{ev}$-structure.  Results about even-primitive vectors were previously known only for special cases of Schur superalgebras $S(1|1)$, $S(2|1)$, $S(3|1)$ and $S(2|2)$, and were given in papers \cite{mz0,gmz,gm,fm1}. Once all even-primitive vectors in $\nabla(\la)$ are known, we can tensor with appropriate powers of "even" determinants and obtain a description of even-primitive vectors for arbitraty induced supermodule $H^0_G(\la)$.

Understanding of certain even-primitive vectors in induced supermodules $H^0_G(\la)$ was one of the ingredients used in the proof of the linkage principle for $GL(m|n)$ in arbitrary characteristic $p$ different from $2$ (it was recently proved in \cite{mz2}). The description of even-primitive vectors in modules $\nabla(\la)$ is likely connected to the linkage principle for Schur superalgebras. In a forthcoming paper \cite{fm3}, we applied this description of even-primitive vectors to derive results related to odd linkage for $G$. This bring a combinatorial perspective and provide additional valuable insight into representation theory of $GL(m|n)$.

The structure of the paper is as follows.
In Section 1 we fix notation related to the general linear supergroup $G$ and induced supermodules $H^0_G(\la)$ and $H^0_{G_{ev}}(\la)$. In particular, we work with the induced supermodule $H^0_G(\la)$ represented as $H^0_{G_{ev}}(\la)\otimes \Lambda(Y)$, 
where $Y=V_m^*\otimes V_n$ is the tensor product of the dual of the natural $GL(m)$-module $V_m$ and the natural $GL(n)$-module $V_n$, and $\Lambda(Y)$ is 
the exterior algebra of $Y$. 
Also, we describe even-primitive elements $\pi_{I|J}$ and their building blocks $\rho_{I|J}$.
In Section 2 we derive congruences for some transposition operators involving $\rho_{I|J}$ modulo certain bideterminants. In Section 3 we prove numerous auxiliary determinantal identities used in the following sections.
In Section 4 we explain general setup of diagrams and tableaux, define operators $\sigma^+$, $\sigma^-$, $\sigma$ and positioning maps. Using these tools we 
construct $G_{ev}$-primitive vectors in $H^0_{G_{ev}}(\la)\otimes \otimes^k Y$ and in the floors $F_k=H^0_{G_{ev}}(\la)\otimes \wedge^k Y$ of $H^0_G(\la)$.
In Section 5 we define operators $\tau^+$, $\tau^-$ and $\tau$ on tableaux $T$. We also discuss properties of these operators and of repositioning maps.
Using these operators we are able to describe certain even-primitive vectors of $H^0_G(\la)$. In Section 6 we disscuss Clausen preorders, linear independence of even-primitive vectors and action of the operator $\tau$ on Littlewood-Richardson tableaux. Also, for hook partition $\la$ and irreducible $H^0_G(\la)$, we obtain a basis 
of even-primitive vectors in $H^0_G(\la)$.
In Section 7 we consider Schur superalgebra $S(m|n)$ and determine a basis of all even-primitive vectors in $\nabla(\la)$, the largest polynomial subsupermodule of $H^0_G(\la)$, and of the corresponding costandard supermodule for $S(m|n)$. 
We also provide a connection of our combinatorial construction to pictures in the sense of Zelevinski \cite{zel}.

\section{Background and notation}

For more information about the general linear supergroup $G=GL(m|n)$, its even subsupergroup $G_{ev}$ and their coordinate and distribution algebras, superderivations, simple and induced supermodules within the context relevant to this paper see \cite{z,fm,bk}.

\subsection{General linear supergroups}
Let the parity of the index $1\leq i\leq m$ be $|i|=0$ and the parity of the index $m+1\leq j\leq n$ be $|j|=1$.  
Let $A(m|n)$ be the superalgebra freely generated by elements $c_{ij}$ for $1\leq i,j \leq m+n$
subject to the supercommutativity relation \[c_{ij}c_{kl}=(-1)^{|c_{ij}||c_{kl}|} c_{kl}c_{ij},\]
where $|c_{ij}|\equiv |i|+|j| \pmod 2$ is the parity of the element $c_{ij}$.
Denote by $A(m|n)_0$ and $A(m|n)_1$, respectively, the subsets of $A(m|n)$ consisting of elements of even and odd parity, respectively. 
There is a natural grading on $A(m|n)$ given by degree $r$. The homogeneous component of $A(m|n)$ corresponding to degree $r\geq 0$ will be 
denoted by $A(m|n,r)$. The dual of $A(m|n,r)$ is the Schur superalgebra denoted by $S(m|n,r)$.

Denote by $A_{ev}(m|n)$ the subsuperalgebra of $A(m|n)$ spanned by the elements $c_{ij}$ such that $|i|=|j|$.
The set $A(m|n)_0$ is a subsuperalgebra of $A(m|n)$ but it is not a domain. On the other hand, the superalgebra
$A_{ev}(m|n)$ is a domain. We will work inside the localization $A(m|n)(A_{ev}(m|n)\setminus 0)^{-1}$ that will be denoted by $K(m|n)$. 

The superalgebra $A(m|n)$ also has a structure of a superbialgebra given by 
comultiplication $\Delta(c_{ij})=\sum_{k=1}^{m+n} c_{ik}\otimes c_{kj}$   
and the counit $\epsilon$ given by $\epsilon(c_{ij})=\delta_{ij}$.

Write the $(m+n)\times (m+n)$-matrix $C=(c_{ij})$ as a block matrix 
\begin{equation*}
C=
\begin{pmatrix}
C_{11} & C_{12} \\ 
C_{21} & C_{22}
\end{pmatrix},
\end{equation*}
where $C_{11}, C_{12}, C_{21}$ and $C_{22}$ are matrices of sizes $m\times m$, $m\times n$, $n\times m$ and $n\times n$, respectively. 
The localization of $A(m|n)$ at the element $det(C_{11})\, det(C_{22})$ is a Hopf superalgebra $K[G]$, where the antipode
$s:X\to X'$ is given by 

\[\begin{aligned}&X'_{11}=(X_11-X_{12}X_{22}X_{21})^{-1}, &X'_{22}&=(X_{22}-X_{21}X_{11}^{-1}X_{12})^{-1}\\
&X'_{12}=-X_{11}^{-1}X_{12}X'_{22}, &X'_{21}&=-X_{22}^{-1}X_{21}X'_{11}
\end{aligned}\]

The Hopf superalgebra $K[G]$ is a coordinate algebra of the general linear supergroup $G=GL(m|n)$.
The general linear group $G$ is a functor from commutative superalgebras to groups 
given by $A\mapsto Hom_{superalg}(K[G],A)$. The category of $G$-supermodules is identical to the category of $K[G]$-supercomodules. 

General linear groups $GL(m)$ and $GL(n)$ are embbeded in $GL(m|n)$ as its even subsupergroups. 
There is a standard maximal even subsupergroup $G_{ev}$ of $G$ such that $G_{ev}\simeq GL(m)\times GL(n)$, corresponding to matrices $X$ where blocks $X_{12}$ and $X_{21}$ vanish.

The supergroups $G$ and $G_{ev}$ have the same standard maximal torus $T=T(m|n)\simeq (K^*)^{m+n}$ corresponding to diagonal matrices. 
Therefore, weights of $G$ and $G_{ev}$ are the same, and a weight $\la$ will be denoted by $(\la_1, \ldots, \la_m|\la_{m+1}, \ldots \la_{m+n})$ and its degree
$\sum_{k=1}^{m+n} \la_k$ will be denoted by $|\la|$.

Simple $G$- and $G_{ev}$-supermodules are in one-to-one correspondence (up to a parity shift) with dominant weights $\la$, where $\la_1\geq \ldots \geq \la_m$ and $\la_{m+1} \geq \ldots \la_{m+n}$. We will disregard the parity shift and denote the simple $G$-supermodule of the highest weight $\la$ by $L(\la)$, and the simple $G_{ev}$-module of the highest weight $\la$ by $L_{ev}(\la)$. 

Since the characteristic of the field $K$ is zero, the $G$-module structure is described by the action of Lie superalgebra $\mathfrak{gl}(m|n)$.
In the paper \cite{fm} we have used the language of superderivations. The action of $e_{ji}\in \mathfrak{gl}(m|n)$ corresponds to the action of right superderivation 
$_{ij}D$ so that $e_{ji}.v = (v)_{ij}D$.

The superderivation $_{ij}D$ is determined by the property 
\[(uv)_{ij}D=(-1)^{(|i|+|j|)|v|}(u)_{ij}Dv + u (v)_{ij}D\]
and 
its action on elements of $A(m|n)$ given by $(c_{kl})_{ij}D=\delta_{li} c_{kj}$. 
The action of $_{ij}D$ extends to $K(m|n)$ using the quotient rule
\[(\frac{u}{v})_{ij}D=\frac{(u)_{ij}Dv-u(v)_{ij}D}{v^2}\] 
for $u,v\in A(m|n)$ and $v$ even.
For more information, consult Section 4 of \cite{lz}.

\subsection{Induced $GL(m)$-modules}

We need to review some classical results about induced modules for general linear superalgebra $GL(m)$.

The group $GL(m)$ has a standard torus $T^+\simeq (K^*)^{m}$ corresponding to diagonal matrices and a standard Borel subgroup $B^+$ corresponding to lower triangular matrices of size
$m\times m$. Let $\la^+=(\la_1, \ldots, \la_m)$ be a weight of $GL(m)$ and let $K_{\la^+}$ be the one-dimensional $T^+$-module of highest weight $\la^+$. 
We can consider $K_{\la^+}$ as a $B^+$-module via extending the action from $T^+$ to $B^+$ trivially. 
The induced supermodule $H^0_{GL(m)}(\la^+)$, which is the zero-th cohomology $H^0(GL(m)/B^+, K_{\la^+})$, is defined to be $Ind_{B^+}^{GL(m)}(K_{\la^+})$.
Since $det(C_{11})$ is invertible, each $H^0_{GL(m)}(\la^+)$ is isomorphic to 
a tensor product 
\[H^0_{GL(m)}(\la')\otimes det(C_{11})^{\la^+_m},\] where 
\[\la'=(\la^+_1-\la^+_m, \ldots, \la^+_{m-1}-\la^+_m,0)\]
is a weight with nonnegative entries. 

Assume now that $\la^+$ is dominant and all entries in $\la^+$ are nonnegative. 
We can realize $H^0_{GL(m)}(\la^+)$ as follows.
For indices $i_1, \ldots, i_s$ that are distinct elements of the set $\{1, \ldots, m\}$,
denote the determinant
\begin{equation*}D^+(i_1, \ldots, i_s)=
\begin{array}{|ccc|}
c_{1,i_1} & \ldots & c_{1,i_s} \\ 
c_{2,i_1} & \ldots & c_{2,i_s} \\ 
\ldots & \ldots & \ldots \\ 
c_{s,i_1} & \ldots & c_{s,i_s}%
\end{array}%
\end{equation*}
Denote by $D^+(s)$ any determinant $D^+(i_1, \ldots, i_s)$ of size $s$.
We say that any expression of type 
\[\prod_{a=1}^{m-1} D^+(a)^{\la^+_a-\la^+_{a+1}}D^+(m)^{\la^+_m}\]
is a bideterminant of shape $\la^+$.

Then any bideterminant of shape $\la^+$ is an element of $H^0_{GL(m)}(\la^+)$, and $H^0_{GL(m)}(\la^+)$ has a basis given by standard bideterminants of the shape $\la^+$ - 
see Section 4 of \cite{gr} or \cite{martin}.

The action of Lie algebra $\mathfrak{gl}(m)$ on the basis elements of $H^0_{GL(m)}(\la^+)$ is expressed in terms of the action of derivations 
$_{ij}D$, where $1\leq i,j\leq m$, as follows
\[(D^+(i_1,\ldots, i_s))_{ij}D= D^+(i_1, \ldots, \widehat{i_t}, j, \ldots, i_s) \] if $i=i_t$ for some $t=1,\ldots, s$ and $(D^+(i_1,\ldots, i_s))_{ij}D=0$ otherwise.

Analogous description applies to $GL(n)$, its weights $\la^-$ and induced $GL(n)$-modules $H^0_{GL(n)}(\la^-)$.

\subsection{$G_{ev}$-structure of induced $G$-supermodules}

Combining the above descriptions of induced $GL(m)$- and $GL(n)$-modules we can get a description of induced $G_{ev}$-modules.

The group $G_{ev}$ has a standard Borel subgroup $B_{ev}$ corresponding to lower triangular matrices $X$, where blocks $X_{12}$ and $X_{21}$ vanish.

Denote by $K_{\la}$ the one-dimensional $T$-supermodule of highest weight $\la$. We can consider $K_{\la}$ as a $B_{ev}$-supermodule via extending the action from $T$ to $B_{ev}$ trivially. 
The induced supermodule $H^0_{G_{ev}}(\la)$, which is the zero-th cohomology $H^0(G_{ev}/B_{ev}, K_{\la})$, is defined to be $Ind_{B_{ev}}^{G_{ev}}(K_{\la})$.

Analogously, the group $G$ has a standard Borel subgroup $B$ corresponding to lower triangular matrices. 
Let $\la$ be a weight of $G$ and let $K_{\la}$ be the one-dimensional $T$-module of highest weight $\la$. 
We can consider $K_{\la}$ as a $B$-module via extending the action from $T$ to $B$ trivially. 
The induced supermodule $H^0_{G}(\la)$, which is the zero-th cohomology $H^0(G/B, K_{\la})$, is defined to be $Ind_{B}^{G}(K_{\la})$.

For understanding of the $G_{ev}$-structure of the induced supermodule $H^0_G(\la)$, the following result that presents $H^0_G(\la)$ as a supermodule
embedded inside $K[G]$ is very important.

The $G$-supermodule $H^0_G(\lambda)$ is described explicitly using the
isomorphism $\tilde{\phi} :H^0_{G_{ev}}(\lambda)\otimes S(C_{12})\to H^0_G(\lambda)$ of superspaces
defined in Lemma 5.1 of \cite{z}. This map is a restriction of the multiplicative morphism $\phi:K[G]\to K[G]$ given on generators as
follows: 
\begin{equation*}
C_{11}\mapsto C_{11}, C_{21}\mapsto C_{21}, C_{12}\mapsto
C_{11}^{-1}C_{12}, C_{22}\mapsto C_{22}-C_{21}C_{11}^{-1}C_{12}.
\end{equation*}
The map $\tilde{\phi}$ is an isomorphism of $G_{ev}$-supermodules and its image is a $G$-subsupermodule of $K[G]$.
Using this map, we consider $H^0_{G_{ev}}(\la)$ embedded into $H^0_G(\la)$ and its highest vector $v$ is represented as a product of bideterminants 
of type
\begin{equation*}D^+(i_1, \ldots, i_s)=
\begin{array}{|ccc|}
c_{1,i_1} & \ldots & c_{1,i_s} \\ 
c_{2,i_1} & \ldots & c_{2,i_s} \\ 
\ldots & \ldots & \ldots \\ 
c_{s,i_1} & \ldots & c_{s,i_s}%
\end{array}%
\end{equation*}
and 
\begin{equation*}D^-(j_1, \ldots, j_t)=
\begin{array}{|ccc|}
\phi(c_{m+1,j_1}) & \ldots & \phi(c_{m+1,j_t}) \\ 
\phi(c_{m+2,j_1}) & \ldots & \phi(c_{m+2,j_t}) \\ 
\ldots & \ldots & \ldots \\ 
\phi(c_{m+t,j_1}) & \ldots & \phi(c_{m+s,j_t})%
\end{array},
\end{equation*}
where indices $i_1, \ldots, i_s$ are distinct elements of the set $\{1, \ldots, m\}$, and $j_1, \ldots, j_t$ 
are distinct elements of the set $\{m+1, \ldots, m+n\}$.
Namely, 
\[v=\prod_{a=1}^m D^+(1,\ldots, a)^{\la_a-\la_{a+1}}\prod_{b=1}^n D^-(m+1, \ldots, m+b)^{\la_{m+b} - \la_{m+b+1}}.\]

Denote by $D^+(s)$ any determinant $D^+(i_1, \ldots, i_s)$ of size $s$ and by $D^-(t)$ any determinant $D^-(j_1, \ldots, j_t)$ of size $t$.
Then any bideterminant of type \[\prod_{a=1}^m D^+(a)^{\la_a-\la_{a+1}}\prod_{b=1}^n D^-(b)^{\la_{m+b} - \la_{m+b+1}}\] is an element of $H^0_{G_{ev}}(\la)$. 
It is well-known that the induced $G_{ev}$-modules have a basis given by semistandard bideterminants of the shape $\la$ - for their description within our context consult \cite{fm}. 
The $G_{ev}$-module structure of $H^0_{G_{ev}}(\la)$ is completely described by action of even superderivations $_{ij}D$ on above bideterminants.

In order to describe the basis and the $G$-supermodule structure of $H^0_G(\la)$, action of odd superderivations $_{ij}D$ on bideterminants must be computed - this was done in Section 2 of \cite{fm}.
This description involves also products of elements $y_{kl}$ given as
\begin{equation*}
y_{kl}=\phi(c_{kl})=\frac{A_{k1}c_{1l}+A_{k2}c_{2l}+\ldots +A_{km}c_{ml}}{D}
\end{equation*}
for $1\leq k\leq m$ and $m+1\leq l \leq m+n$, 
where the matrix $A=(A_{ij})$ is the adjoint of the matrix $C_{11}$ and $D=Det(C_{11})$.

The span of all elements $y_{kl}$ for $1\leq k\leq m$ and $m+1\leq l\leq m+n$ is a $G_{ev}$-supermodule that will be denoted by $Y$.

\subsection{Largest polynomial subsupermodule $\nabla(\la)$ of $H^0_G(\la)$}

A $G$-supermodule $v$ is polynomial if and only if its coefficient space $cf(V)$ belongs to $A(m|n)$, 
that is if any $g\in G$ acts on $V$ in a such way that all matrix coefficients of $g$ (with respect to any basis of $V$) are polynomial functions in $g_{ij}$. 
It follows that all weights of a polynomial supermodule $V$ are such that all of their components are nonnegative.
If $\la$ is a highest weight of a polynomial $G$-supermodule, it is called the {\it polynomial} weight of $G$. Since $char(K)=0$, the polynomial weights of $G$ correspond to $(m|n)$-hook partitions - see \cite{br}. 
A complete description of polynomial weights if $char(K)=p\neq 2$ was obtained in \cite{bk}.
The largest polynomial subsupermodule of $H^0_G(\la)$ is denoted by $\nabla(\la)$. 

The category of polynomial $G$-supermodules of degree $r\geq 0$ is isomorphic to the category of supermodules over the Schur superalgebra $S(m|n,r)$.
Under this isomorphism, the supermodule $\nabla(\la)$ corresponds to the constandard module of the highest weight $\la$ for the Schur superalgebra $S(m|n,r)$. 
It has been proved in \cite{z} that the category of $G$-supermodules is a highest weight category. On the other hand, according to \cite{mz1} the category of supermodules over Schur superalgebra $S(m|n,r)$ is a highest weight category if and only if it is semisimple. Consequently, our understanding of the structure of $S(m|n,r)$ is much more elusive than that of 
$G$. For more information on $\nabla(\la)$ see Section 6 of \cite{z} and Section 5 of \cite{bk}.

In the classical case of a general linear group $GL(m)$, any induced module $H^0_{GL(m)}(\mu)$, where $\mu=(\mu_1, \ldots, \mu_m)$ is dominant, is isomorphic to a tensor multiple 
of the induced module $H^0_{GL(m)}(\mu_1-\mu_m, \ldots, \mu_{m-1}-\mu_m,0)$, which is polynomial $GL(m)$-module, and of the $\mu_m$-th power of the determinant for $GL(m)$. 
The structure of induced modules $H^0_G(\la)$ for polynomial $\mu$ is given by biterminants and is well understood. 
Therefore, the $GL(m)$-structure of all induced modules $H^0_{GL(m)}(\mu)$ is known.
By extension, the $G_{ev}$-structure of $H^0_{G_{ev}}(\la)$, for every dominant $\la$ as before, is determined and we will use it later.
 
The connection between the $G_{ev}$-module structure of $H^0_G(\la)$ and $\nabla(\la)$ is not so satisfactory as in the case of $GL(m)$-modules. 
The element $Ber(C) = det(C_{11}-C_{12}C_{22}^{-1}C_{21})det(C_{22})^{-1}$ generates a one-dimensional $G$-supermodule $Ber$ of the weight
$(1,\ldots, 1|-1,\ldots, -1)$. 
Since the Berezinian $Ber(C)$ is a group-like element, tensoring with powers of $Ber$ gives an isomorphism between corresponding induced supermodules. If $\la$ is polynomial, then the image of $\nabla(\la)$ is again a subsupermodule of the corresponding induced supermodule, although it need not be its polynomial subsupermodule. 
This way, by tensoring with powers of $Ber$ we can extend results derived for $\nabla(\la)$ to subsupermodules of certain, but not all, induced supermodules.

Assume now that a polynomial weight of $G$ corresponds to a $(m|n)$-hook partition $\la$.
For every $(m|n)$-hook partition $\la=(\la_1, \ldots, \ldots, \la_t)$ denote by $\la^+=(\la_1, \ldots, \la_{min\{m,t\}})$ and by $\la^-$ the transpose of the 
partition $(\la_{m+1}, \ldots, \la_{t})$. Note that $\la^-$ is nonzero only if $t>m$. The correspoding polynomial weight $\la$ of $G$ will be identified with $(\la^+|\la^-)$.

Denote by $S_{\mu}(x_1, \ldots, x_m)$ the Schur function and by $S_{\la/\mu}(y_1, \ldots, y_n)$ the skew Schur function.
It is well known (see \cite{br}) that 
the character of the induced $G$-supermodule of the highest weight $\la$ is given by the hook Schur fuction $HS(\la)$ described as:
\[HS_{\la}(x_1, \ldots, x_m;y_1, \ldots y_n)=\sum_{\mu<\la^+} S_{\mu}(x_1, \ldots, x_m) S_{\la'/ \mu'}(y_1, \ldots, y_n),\]
where ${\la'/ \mu'}$ is the conjugate of the skew partition $\la/\mu$. 
The hook Schur function can be also given as
$\sum_{T_{\la}(m,n) \,\, semistandard} T_{\la}(x_1, \ldots, x_m;y_1, \ldots y_n)$,
where $T_{\la}(x_1, \ldots, x_m;y_1, \ldots y_n)$ are polynomials counting the number of appearances of symbols $1$ through $m+n$ in tableaux $T$.

According to Theorem 6. 11 of \cite{br}, the dimension of even-primitive vectors of weight $(\mu|\nu)$ in $H^0_G(\la)$ is given as the Littlewood-Richardson coefficient 
$C^{\la'}_{\mu'\nu}$ from the decomposition of skew Schur function $S_{\la'/ \mu'}=\sum C^{\la'}_{\mu'\nu} S_{\nu}$ determined by the Littlewood-Richardson rule.
This multiplicity equals the number of Littlewood-Richardson tableux of shape ${\la'/ \mu'}$ and content $\nu$.
For more information on the above, consult \cite{br}, \cite{fulton} and \cite{van}.

\subsection{Primitive vectors $\pi_{I|J}$}

Primitive vectors play a crucial role in the representation theory of algebraic (groups and) supergroups because they generate subsupermodules within a given supermodule. Their description is important for the understanding of the structure of induced supermodules.

In order to define what $G$-primitive vector is, we need to consider a unipotent subsupergroup $U^-$ of $G$ corresponding to lower trangular matrices with all diagonal entries equal to $1$, and a unipotent subsupergroup $U^+$ of $G$ corresponding to upper trangular matrices with all diagonal entries equal to $1$. 
There is a Cartan decomposition $G=U^+TU^-$. A vector $v$ of weight $\la$ of a supermodule $M$ is called primitive vector if it is annihilated by any element of $U^-$.

Analogously, we have a Cartan decomposition $G_{ev}=U^+_{ev}TU^-_{ev}$, where $U^+_{ev}$ is the unipotent subgroup of $G_{ev}$ corresponding to lower triangular matrices from $G_{ev}$ with all diagonal entries equal to $1$, and $U^-_{ev}$ is the unipotent subgroup of $G_{ev}$ corresponding to lower triangular matrices from $G_{ev}$ with all diagonal entries equal to $1$. A vector $v$ of weight $\la$ of a module $M$ is called $G_{ev}$-primitive (or even-primitive vector)  if it is annihilated by any element of $U^-_{ev}$.

Since we are interested in even-primitive vectors inside of induced supermodule $H^0_G(\la)$ considered as a subsupermodule of $K[G]$, we can describe 
its even-primitive vectors using superderivations $_{ij}D$ as follows. A vector $v\in K[G]$ (or more generally $v\in K(m|n)$) is even-primitive vector if and only if 
$(v)_{ij}D=0$ whenever $ i>j$ and either $1\leq i,j \leq m$ or $m+1\leq i,j \leq m+n$.

Fix a dominant weight $\la$ of $G$.
Consider the multiindex \[(I|J)=(i_1\ldots i_k|j_1\ldots j_k)\] such that $1\leq i_1, \ldots, i_k \leq m$ and $1\leq j_1, \ldots, j_k\leq n$ and call $k$ the length of $I$ and $J$. 
Denote by $cont(I|J)=(cont(I)|cont(J))$ the content of $(I|J)$, the vector counting the number of appearences of symbols $1$ through $m+n$ in $(I|J)$.
Denote by $\delta^+_i$ the weight of $G$ with all components zeroes, except for the $i$-component that is equal to 1, and denote by 
$\delta^-_j$ the weight of $G$ with all components zeroes, except for the $m+j$-component that is equal to 1. Then define the weight $\la_{I|J}$ corresponding 
to $\la$ and multiindex $(I|J)$ by
\[\la_{I|J}=\la-\sum_{s=1}^k \delta^+_{i_s}+\sum_{s=1}^k \delta^-_{j_s}.\]

Assume $(I|J)=(i_1\ldots i_k|j_1\ldots j_k)$ is such that $\la_{I|J}$ is dominant.
If $i_1\leq i_2 \leq \ldots \leq i_k$ and $i_r=i_{r+1}$ implies $j_r<j_{r+1}$, then $(I|J)$ is called left admissible. 
Any $(I|J)$ that is left admissible will be called just admissible.
If $j_1\leq j_2 \leq \ldots \leq j_k$ and $j_s=j_{s+1}$ implies $i_s<i_{s+1}$, then $(I|J)$ is called right admissible.

For each $1\leq i \leq m$ and $1\leq j\leq n$ denote by $\rho_{i|j}$ the following element: 
\[\sum_{r=i}^m D^+(1, \ldots, i-1,r)\sum_{s=1}^{j} (-1)^{s+j} D^-(m+1, \ldots, \widehat{m+s},\ldots, m+j)y_{r,m+s}\]
of $A_{ev}(m|n)\otimes Y$. As customary, we define $D^-(\emptyset)=1$; this is used for $s=j=1$ when $D^-(m+1, \ldots, \widehat{m+s},\ldots, m+j)=1$.
For each $(I|J)=(i_1\ldots i_k|j_1\ldots j_k)$ as above
denote $\rho_{I|J}=\otimes_{s=1}^k \rho_{i_s|j_s}$. We will abuse the notation and consider $\rho_{I|J}$ as an element of $A_{ev}(m|n)\otimes Y^{\otimes k}$
via a map that sends $(f_1\otimes y_1)\otimes \ldots \otimes (f_k\otimes y_k)$ to $(f_1\ldots f_k)\otimes (y_1\otimes \ldots \otimes y_k)$, where $f_1\ldots f_k$ is the product in 
$A_{ev}(m|n)$.

Consider the following element:
\[v_{I|J}=\frac{v}{\prod_{s=1}^k D^+(1, \ldots, i_s)\prod_{s=1}^k D^-(m+1,\ldots, m+j_s-1)},\]
where, in particular, $D^-(m+1,\ldots, m+j_s-1)=1$ if $j_s=1$. 

Clearly the expression $v_{I|J}$ depends only on the content $cont(I|J)=(\iota|\kappa)$. By definition, $v_{I|J}$ belongs to $K(m|n)$. 

Define \[\pi_{I|J}=v_{I|J}\rho_{I|J}.\]

It follows from Proposition 3.4 of \cite{fm} using the argument in the proof of Lemma 4.1 of \cite{fm} that every $\pi_{I|J}$ is an even-primitive vector in $K(m|n)$.
Therefore any linear combination of vectors $\pi_{I|J}$, where content of $I|J$ is the same, is an even-primitive vector in $K(m|n)$. It is a crucial observation, which follows from the definition of $v_{I|J}$, that if such a linear combination belongs to $A(m|n)$ (which can be checked by verifying certain congruences modulo $D^+(1, \ldots, i_s)$ and $D^-(m+1,\ldots, m+j_s-1)$), then it also belongs to the supersubmodule $\nabla(\la)$ of $H^0_G(\la)$. This is the way we will construct even-primitive vectors of $\nabla(\la)$ and then of 
$H^0_G(\la)$.

The weight of $\pi_{I|J}$ that equals $(\la^+-\iota|\la^-+\kappa)$ will be denoted by $(\mu|\nu)$ and $v_{I|J}$ will be also denoted by 
$v_{\mu|\nu}$. The components $\iota$ and $\kappa$ of $cont(I|J)$ correspond to skew partitions $\la^+/ \mu$ and $\nu/ \la^-$, respectively. 

In \cite{fm}, the weight $\la$ is called $(I|J)$-robust if $v_{I|J}$ belongs to $A(m|n)$. It is clear that this happens if and only if 
the symbol $i_s<m$ appears at most $\la^+_{i_s}-\la^+_{i_s+1}$ times in $I$,
symbol $m$ appears at most $\la^+_m$ times in $I$; and 
symbol $j_t>1$ appears at most $\la^-_{j_t-1}-\la^-_{j_t}$ times in $J$.

From now on, the image of an element $x$ under the natural map $K(m|n) \otimes \otimes^k Y \to K(m|n)\otimes \wedge^k Y$ will be denoted by $\overline{x}$. In particular, 
$\overline{\rho}_{I|J}$ and $\overline{\pi}_{I|J}$, respectively, are images of $\rho_{I|J}$ and $\pi_{I|J}$, respectively.

Theorem 4.4 of \cite{fm} states that if $char(K)=0$, $(I|J)=(i_1\ldots i_k|j_1\ldots j_k)$ is admissible, $\la$ is $(I|J)$-robust, $\la_{I|J}=\tau$, and $\tau_m\geq n$, then the set of all vectors $\overline{\pi}_{K|L}$ for admissible $(K|L)$ such that $cont(K|L)=cont(I|J)$ form a basis of the set of even-primitive vectors of weight $\tau$ in $H^0_G(\la)$.

It is clear that each even-primitive vector in $H^0_G(\la)$ has a weight $\tau=\la_{I|J}$ for some admissible $(I|J)$. 
In this paper, in the case $char(K)=0$, we describe explicitly a basis of even-primitive vectors of weight $\tau$ in $H^0_G(\la)$ as linear combinations of vectors $\overline{\pi}_{K|L}$, 
where $cont(K|L)=cont(I|J)$. Each coefficient in this linear combination is either zero, or plus or minus one.

\section{Congruences modulo $D^+(1, \ldots, i)$ and $D^-(m+1, \ldots, m+j)$}

We start with the following lemma.

\begin{lm}\label{l3}
Let $1\leq i<m$ and $1\leq j_1, j_2 \leq n$. Then 
$\rho_{i,j_1}\otimes\rho_{i+1,j_2}-\rho_{i+1,j_1}\otimes \rho_{i,j_2}$ 
equals 
\[\begin{aligned}&\sum_{i\leq r_1<r_2\leq m}\sum_{s_1=1}^{j_1}\sum_{s_2=1}^{j_2} D^+(1,\ldots,i-1,i)D^+(1,\ldots,i-1,r_1,r_2)(-1)^{s_1+s_2+j_1+j_2}\times \\
&D^-(m+1,\ldots, \widehat{m+s_1}, \ldots, m+j_1)D^-(m+1, \ldots, \widehat{m+s_2}, \ldots, m+j_2)\times\\
&[y_{r_1,m+s_1} \otimes y_{r_2,m+s_2}-y_{r_2,m+s_1}\otimes y_{r_1,m+s_2}].
\end{aligned}\]
In particular, $\rho_{i,j_1}\otimes\rho_{i+1,j_2}-\rho_{i+1,j_1}\otimes \rho_{i,j_2} \equiv 0 \pmod {D^+(1, \ldots, i)}$.
\end{lm}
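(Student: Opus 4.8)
The plan is to expand both terms on the left-hand side by the definition of $\rho_{i|j}$ and then to recognise that the resulting scalar coefficients are antisymmetric in a pair of row indices, so that a single flag-minor identity closes the computation. Recall that
\[\rho_{i|j}=\sum_{r=i}^{m}\sum_{s=1}^{j}(-1)^{s+j}\,D^+(1,\ldots,i-1,r)\,D^-(m+1,\ldots,\widehat{m+s},\ldots,m+j)\otimes y_{r,m+s},\]
where the scalar (even) factors lie in $A_{ev}(m|n)$ and $y_{r,m+s}\in Y$. First I would substitute this into both $\rho_{i,j_1}\otimes\rho_{i+1,j_2}$ and $\rho_{i+1,j_1}\otimes\rho_{i,j_2}$, using the stated identification of $\rho_{I|J}$ with an element of $A_{ev}(m|n)\otimes Y^{\otimes 2}$ (the $A_{ev}(m|n)$-factors, being even, multiply and pull out with no sign). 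Since both products carry the same $D^-$-determinants and signs, I would factor out, for each fixed pair $(s_1,s_2)$, the common scalar
\[\beta_{s_1,s_2}=(-1)^{s_1+s_2+j_1+j_2}\,D^-(m+1,\ldots,\widehat{m+s_1},\ldots,m+j_1)\,D^-(m+1,\ldots,\widehat{m+s_2},\ldots,m+j_2),\]
reducing the statement to an identity involving only the $D^+$-determinants and the vectors $y_{r,m+s}$.

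Next, writing $f(r)=D^+(1,\ldots,i-1,r)$ and $g(r)=D^+(1,\ldots,i,r)$ for $i\le r\le m$, I would compute the coefficient of $y_{r_1,m+s_1}\otimes y_{r_2,m+s_2}$ in the difference and identify it as $\beta_{s_1,s_2}\bigl(f(r_1)g(r_2)-g(r_1)f(r_2)\bigr)$. Here a bookkeeping point must be checked: the factors $\rho_{i,\cdot}$ sum over $r\ge i$ while the factors $\rho_{i+1,\cdot}$ sum over $r\ge i+1$, so the two summation ranges differ, but the discrepancy is absorbed precisely because $g(i)=D^+(1,\ldots,i,i)=0$ (a repeated column). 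Hence both $r_1$ and $r_2$ may be summed over the full range $i\le r_1,r_2\le m$, and the coefficient is manifestly antisymmetric under $r_1\leftrightarrow r_2$. The diagonal therefore vanishes, and the double sum collapses to
\[\sum_{i\le r_1<r_2\le m}\bigl(f(r_1)g(r_2)-g(r_1)f(r_2)\bigr)\bigl(y_{r_1,m+s_1}\otimes y_{r_2,m+s_2}-y_{r_2,m+s_1}\otimes y_{r_1,m+s_2}\bigr).\]

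The heart of the argument is then the flag-minor identity
\[D^+(1,\ldots,i-1,r_1)\,D^+(1,\ldots,i,r_2)-D^+(1,\ldots,i,r_1)\,D^+(1,\ldots,i-1,r_2)=D^+(1,\ldots,i)\,D^+(1,\ldots,i-1,r_1,r_2).\]
I would prove this by a Schur-complement (Sylvester) reduction: each minor appearing contains the leading block on rows and columns $1,\ldots,i-1$, so after dividing by $D^+(1,\ldots,i-1)$ each becomes a $1\times1$ or $2\times2$ determinant in the six Schur-complement entries $\tilde c_{pq}$ with $p\in\{i,i+1\}$ and $q\in\{i,r_1,r_2\}$. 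The identity then reduces to the elementary cofactor computation $b(ah-dc)-(ae-db)c=a(bh-ec)$ in these entries (all even, hence commuting). Substituting this into the collapsed sum and reinstating the factors $\beta_{s_1,s_2}$ gives exactly the claimed expression, since $D^+(1,\ldots,i)=D^+(1,\ldots,i-1,i)$. The ``in particular'' congruence is then immediate, as every summand on the right-hand side carries the explicit factor $D^+(1,\ldots,i)$. I expect the main obstacle to be this determinantal identity together with the index bookkeeping that feeds into it: getting the antisymmetric form right (in particular the clean absorption of the boundary case $r_1=i$ or $r_2=i$ via $g(i)=0$) and pinning down the single factor of $D^+(1,\ldots,i-1)$ and all signs in the Sylvester reduction, while the remaining rearrangement of sums is routine.
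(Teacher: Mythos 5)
Your proof is correct and follows essentially the same route as the paper: expand both tensor products, observe that the coefficient of $y_{r_1,m+s_1}\otimes y_{r_2,m+s_2}$ is antisymmetric in $(r_1,r_2)$, and close the computation with the flag-minor identity $D^+(1,\ldots,i-1,r_1)D^+(1,\ldots,i,r_2)-D^+(1,\ldots,i,r_1)D^+(1,\ldots,i-1,r_2)=D^+(1,\ldots,i)D^+(1,\ldots,i-1,r_1,r_2)$, which the paper derives from the same $2\times 2$ cofactor identity via the law of extensible minors (equivalent to your Schur-complement reduction). The only cosmetic difference is that you absorb the boundary terms uniformly via $D^+(1,\ldots,i,i)=0$, whereas the paper splits off the $r_1=i$ summands and treats them separately.
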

\begin{proof}
Write
\[\begin{aligned}&\rho_{i,j_1}\otimes\rho_{i+1,j_2}-\rho_{i+1,j_1}\otimes \rho_{i,j_2}=\\
&[\sum_{r_1=i}^m D^+(1, \ldots, i-1, r_1)\sum_{s_1=1}^{j_1}(-1)^{s_1+j_1}D^-(m+1, \ldots, \widehat{m+s_1}, \ldots, m+j_1) y_{r_1,m+s_1}]\otimes\\
&[\sum_{r_2=i+1}^m D^+(1, \ldots, i, r_2)\sum_{s_2=1}^{j_2}(-1)^{s_2+j_2}D^-(m+1, \ldots, \widehat{m+s_2}, \ldots, m+j_2) y_{r_2,m+s_2}]\\
-&[\sum_{r_2=i+1}^m D^+(1, \ldots, i, r_2)\sum_{s_1=1}^{j_1}(-1)^{s_1+j_1}D^-(m+1, \ldots, \widehat{m+s_1}, \ldots, m+j_1) y_{r_2,m+s_1}]\otimes\\
&[\sum_{r_1=i}^m D^+(1, \ldots, i-1, r_1)\sum_{s_2=1}^{j_2}(-1)^{s_2+j_2}D^-(m+1, \ldots, \widehat{m+s_2}, \ldots, m+j_2) y_{r_1,m+s_2}]\\
&=\sum_{r_1=i}^m\sum_{r_2=i+1}^m\sum_{s_1=1}^{j_1}\sum_{s_2=1}^{j_2} D^+(1,\ldots,i-1,r_1)D^+(1,\ldots,i,r_2)(-1)^{s_1+s_2+j_1+j_2}\times\\
&D^-(m+1,\ldots, \widehat{m+s_1}, \ldots, m+j_1)D^-(m+1, \ldots, \widehat{m+s_2}, \ldots, m+j_2)\times\\
&[y_{r_1,m+s_1}\otimes y_{r_2,m+s_2}-y_{r_2,m+s_1}\otimes y_{r_1,m+s_2}]\\
\end{aligned}\]
and break it up into two sums 
\begin{equation*}{(*)}
\begin{aligned}&\sum_{r_1=i+1}^m\sum_{r_2=i+1}^m\sum_{s_1=1}^{j_1}\sum_{s_2=1}^{j_2} D^+(1,\ldots,i-1,r_1)D^+(1,\ldots,i,r_2)(-1)^{s_1+s_2+j_1+j_2}\times\\
&D^-(m+1,\ldots, \widehat{m+s_1}, \ldots, m+j_1)D^-(m+1, \ldots, \widehat{m+s_2}, \ldots, m+j_2)\times\\
&[y_{r_1,m+s_1} \otimes y_{r_2,m+s_2}-y_{r_2,m+s_1}\otimes y_{r_1,m+s_2}] 
\end{aligned}
\end{equation*}
and
\[\begin{aligned}
&\sum_{r_2=i+1}^m\sum_{s_1=1}^{j_1}\sum_{s_2=1}^{j_2} D^+(1,\ldots,i-1,i)D^+(1,\ldots,i,r_2)(-1)^{s_1+s_2+j_1+j_2}\times\\
&D^-(m+1,\ldots, \widehat{m+s_1}, \ldots, m+j_1)D^-(m+1, \ldots, \widehat{m+s_2}, \ldots, m+j_2)\times\\
&[y_{i,m+s_1} \otimes y_{r_2,m+s_2}-y_{r_2,m+s_1}\otimes y_{i,m+s_2}]
\end{aligned}.\]

If $r_1=r_2$, then the corresponding contribution in the first sum equals zero.
Consider now $i+1\leq r_1\neq r_2\leq m$.

The determinantal identity 
\[\begin{aligned}&D^+(1, \ldots, i-1,r_1)D^+(1, \ldots, i,r_2)-D^+(1, \ldots, i-1, r_2)D^+(1, \ldots, i,r_1)\\
&=D^+(1, \ldots, i-1,r_1,r_2)D^+(1, \ldots, i-1,i)\end{aligned}\]
follows from the identity
\[c_{i,r_1}\,\begin{array}{|cc|}c_{i,i}&c_{i,r_2}\\c_{i+1,i}&c_{i+1,r_2}\end{array}-
c_{i,r_2}\,\begin{array}{|cc|}c_{i,i}&c_{i,r_1}\\c_{i+1,i}&c_{i+1,r_1}\end{array}=
\begin{array}{|cc|}c_{i,r_1}&c_{i,r_2}\\c_{i+1,r_1}&c_{i+1,r_2}\end{array}\,c_{i,i}\]
using the law of extensible minors (see \cite{bs} and \cite{fm}), by adding new rows $1,\ldots, i-1$ and new columns $1, \ldots, i-1$.

Therefore
\[\begin{aligned}&D^+(1, \ldots, i-1,r_1)D^+(1, \ldots, i,r_2)[y_{r_1,m+s_1}\otimes y_{r_2,m+s_2}-y_{r_2,m+s_1}\otimes y_{r_1,m+s_2}]\\
&+D^+(1, \ldots, i-1,r_2)D^+(1, \ldots, i,r_1)[y_{r_2,m+s_1} \otimes y_{r_1,m+s_2}- y_{r_1,m+s_1}\otimes y_{r_2,m+s_2}]\\
&=[D^+(1, \ldots, i-1,r_1)D^+(1, \ldots, i,r_2)-D^+(1, \ldots, i-1,r_2)D^+(1, \ldots, i,r_1)]\times \\
&[y_{r_1,m+s_1}\otimes y_{r_2,m+s_2}-y_{r_2,m+s_1}\otimes y_{r_1,m+s_2}]\\
&=D^+(1, \ldots, i-1,r_1,r_2)D^+(1, \ldots, i-1,i)[y_{r_1,m+s_1}\otimes y_{r_2,m+s_2}-y_{r_2,m+s_1}\otimes y_{r_1,m+s_2}]
\end{aligned}.\]

Using this, we can rewrite the expression $(*)$ as
\[\begin{aligned}&\sum_{i<r_1<r_2\leq m}\sum_{s_1=1}^{j_1}\sum_{s_2=1}^{j_2} D^+(1,\ldots,i-1,i)D^+(1,\ldots,i-1,r_1,r_2)(-1)^{s_1+s_2+j_1+j_2}\times\\
&D^-(m+1,\ldots, \widehat{m+s_1}, \ldots, m+j_1)D^-(m+1, \ldots, \widehat{m+s_2}, \ldots, m+j_2)\times\\
&[y_{r_1,m+s_1} \otimes y_{r_2,m+s_2}-y_{r_2,m+s_1}\otimes y_{r_1,m+s_2}]
\end{aligned} 
\]
and $\rho_{i,j_1}\otimes\rho_{i+1,j_2}-\rho_{i+1,j_1}\otimes \rho_{i,j_2}$ as
\[\begin{aligned}&\sum_{i\leq r_1<r_2\leq m}\sum_{s_1=1}^{j_1}\sum_{s_2=1}^{j_2} D^+(1,\ldots,i-1,i)D^+(1,\ldots,i-1,r_1,r_2)(-1)^{s_1+s_2+j_1+j_2}\times\\
&D^-(m+1,\ldots, \widehat{m+s_1}, \ldots, m+j_1)D^-(m+1, \ldots, \widehat{m+s_2}, \ldots, m+j_2)\times\\
&[y_{r_1,m+s_1} \otimes y_{r_2,m+s_2}-y_{r_2,m+s_1}\otimes y_{r_1,m+s_2}].
\end{aligned}\]
\end{proof}

\begin{lm}\label{l4}
Let $1\leq i_1,i_2\leq m$ and $1\leq j < n$. Then $\rho_{i_1,j}\otimes \rho_{i_2,j+1}-\rho_{i_1,j+1}\otimes \rho_{i_2,j}$ equals
\[\begin{aligned}&\sum_{r_1=i_1}^m\sum_{r_2=i_2}^m\sum_{1\leq s_1<s_2\leq j+1} D^+(1, \ldots, i_1-1,r_1)D^+(1, \ldots, i_2-1,r_2)(-1)^{s_1+s_2+1}\times \\
&D^-(m+1, \ldots, m+j)D^-(m+1, \ldots, \widehat{m+s_1}, \ldots, \widehat{m+s_2}, \ldots, m+j+1)\times \\
&[y_{r_1,m+s_1}\otimes y_{r_2,m+s_2}-y_{r_1,m+s_2}\otimes y_{r_2,m+s_1}].
\end{aligned}\]
In particular, $\rho_{i_1,j}\otimes \rho_{i_2,j+1}-\rho_{i_1,j+1}\otimes \rho_{i_2,j} \equiv 0 \pmod{D^-(m+1, \ldots, m+j)}$.
\end{lm}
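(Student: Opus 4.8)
The plan is to mirror the proof of Lemma~\ref{l3}, transporting the argument from the ``plus'' (row) side to the ``minus'' (column) side. First I would substitute the defining expansions of $\rho_{i_1,j},\rho_{i_2,j+1},\rho_{i_1,j+1},\rho_{i_2,j}$ into $\rho_{i_1,j}\otimes\rho_{i_2,j+1}-\rho_{i_1,j+1}\otimes\rho_{i_2,j}$ and expand everything into a fourfold sum over the row indices $r_1\in\{i_1,\ldots,m\}$, $r_2\in\{i_2,\ldots,m\}$ and the two column indices coming from the $D^-$-factors. Because all $D^+$- and $D^-$-determinants are even, they can be pulled in front of the $y\otimes y$ monomials with no Koszul signs, exactly as in Lemma~\ref{l3}; the only signs are the explicit $(-1)^{s+j}$ weights in the definition of $\rho$. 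Collecting the product $D^+(1,\ldots,i_1-1,r_1)D^+(1,\ldots,i_2-1,r_2)$, which is common to both terms and is \emph{not} to be simplified here (unlike the plus-side in Lemma~\ref{l3}), I expect the difference to organize into a single sum carrying the antisymmetric bracket $[y_{r_1,m+a}\otimes y_{r_2,m+b}-y_{r_1,m+b}\otimes y_{r_2,m+a}]$ with a common sign $(-1)^{a+b+1}$, where $a$ runs over the index from the ``short'' factor ($1\le a\le j$) and $b$ over the ``long'' factor ($1\le b\le j+1$).

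Next I would reindex this sum over \emph{ordered} pairs. The bracket vanishes for $a=b$ and is antisymmetric under $a\leftrightarrow b$, so for each pair $s_1<s_2$ the contributions of $(a,b)=(s_1,s_2)$ and of $(a,b)=(s_2,s_1)$ (the latter present only when $s_2\le j$) combine, after extracting the common sign $(-1)^{s_1+s_2+1}$, into the determinantal difference
\[
D^-(\ldots,\widehat{m+s_1},\ldots,m+j)\,D^-(\ldots,\widehat{m+s_2},\ldots,m+j+1)-D^-(\ldots,\widehat{m+s_2},\ldots,m+j)\,D^-(\ldots,\widehat{m+s_1},\ldots,m+j+1).
\]
The crux is therefore to prove the $D^-$-analogue of the determinantal identity used in Lemma~\ref{l3}, namely that this difference equals $D^-(m+1,\ldots,m+j)\,D^-(m+1,\ldots,\widehat{m+s_1},\ldots,\widehat{m+s_2},\ldots,m+j+1)$.

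To establish this identity I would isolate the columns $t=\{m+1,\ldots,m+j\}\setminus\{m+s_1,m+s_2\}$ common to all six determinants and regard $m+s_1,m+s_2,m+j+1$ as the active columns; writing $[\,\cdot\,]$ for the minor on rows $m+1,\ldots,m+(\text{size})$, the identity takes the Grassmann--Plücker shape $[t,s_2]\,[t,s_1,j+1]-[t,s_1]\,[t,s_2,j+1]=[t,s_1,s_2]\,[t,j+1]$. This is proved as in Lemma~\ref{l3}: verify the core case $t=\emptyset$, a $2\times 3$ relation among the minors of the matrix $(\phi(c_{m+p,m+q}))$ equivalent to the cofactor identity $b(af-cd)-a(bf-ce)=(ae-bd)c$, and then reinstate the common columns $t$ together with the rows $m+1,\ldots,m+(j-1)$ by the law of extensible minors (see \cite{bs} and \cite{fm}). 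Finally I would treat the boundary value $s_2=j+1$ separately: there the $(a,b)=(s_2,s_1)$ contribution is absent, and since $D^-(\ldots,\widehat{m+(j+1)},\ldots,m+j+1)=D^-(m+1,\ldots,m+j)$ the single surviving term already has the desired shape, so no identity is needed. Substituting the identity back yields the displayed formula, and the congruence modulo $D^-(m+1,\ldots,m+j)$ is then immediate because that factor divides every summand.

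I expect the main obstacle to be the determinantal identity itself, and specifically the bookkeeping in the law of extensible minors: unlike in Lemma~\ref{l3}, where the reinstated columns $1,\ldots,i-1$ form a clean prefix of the active indices, here the common columns $t$ are interspersed among $m+s_1,m+s_2$ (which need not be the largest elements of $\{m+1,\ldots,m+j\}$), so one must check that the column orderings, and hence the determinant signs, match uniformly across all six minors. The second delicate point is the sign-and-range bookkeeping when passing from pairs $(a,b)$ with $a\le j$, $b\le j+1$ to ordered pairs $s_1<s_2\le j+1$, in particular correctly excluding the $(s_2,s_1)$ contribution at $s_2=j+1$.
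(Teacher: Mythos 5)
Your proposal follows essentially the same route as the paper's proof: expand the $\rho$'s, split off the boundary contribution $s_2=j+1$ (where $D^-(m+1,\ldots,\widehat{m+j+1},\ldots,m+j+1)=D^-(m+1,\ldots,m+j)$ already supplies the factor), kill the diagonal $s_1=s_2$, and antisymmetrize the remaining pairs via the $D^-$-determinantal identity, which the paper likewise derives from the same $2\times 3$ cofactor relation by the law of extensible minors with the interspersed rows and columns $m+1,\ldots,\widehat{m+s_1},\ldots,\widehat{m+s_2},\ldots,m+j$ reinstated. The argument is correct and matches the paper's in all essentials.
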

\begin{proof}
The proof of this lemma is similar to the proof of the Lemma \ref{l3} and we will only provide its shorter outline.
Write $\rho_{i_1,j}\otimes \rho_{i_2,j+1}-\rho_{i_1,j+1}\otimes \rho_{i_2,j}$ as a sum of 
\begin{equation*}(**)\begin{aligned}&\sum_{r_1=i_1}^m\sum_{r_2=i_2}^m\sum_{s_1=1}^j\sum_{s_2=1}^j D^+(1, \ldots, i_1-1,r_1)D^+(1, \ldots, i_2-1,r_2)(-1)^{s_1+s_2+1}\times \\
&D^-(m+1, \ldots, \widehat{m+s_1},\ldots, m+j)D^-(m+1, \ldots, \widehat{m+s_2}, \ldots, m+j+1)\times \\
&[y_{r_1,m+s_1}\otimes y_{r_2,m+s_2}-y_{r_1,m+s_2}\otimes y_{r_2,m+s_1}] 
\end{aligned}\end{equation*}
and 
\[\begin{aligned}&\sum_{r_1=i_1}^m\sum_{r_2=i_2}^m\sum_{s_1=1}^j D^+(1, \ldots, i_1-1,r_1)D^+(1, \ldots, i_2-1,r_2)(-1)^{s_1+j}\times \\
&D^-(m+1, \ldots, \widehat{m+s_1},\ldots, m+j)D^-(m+1, \ldots, \ldots, m+j)\times \\
&[y_{r_1,m+s_1}\otimes y_{r_2,m+j+1}-y_{r_1,m+j+1}\otimes y_{r_2,m+s_1}]. 
\end{aligned}\]

If $s_1=s_2$, then the corresponding contribution in the first sum equals zero.
Consider now $1\leq s_1\neq s_2\leq j$.

The determinantal identity 
\[\begin{aligned}&D^-(m+1, \ldots, \widehat{m+s_1},\ldots, m+j)D^-(m+1, \ldots, \widehat{m+s_2}, \ldots, m+j+1)\\
&-D^-(m+1, \ldots, \widehat{m+s_2},\ldots, m+j)D^-(m+1, \ldots, \widehat{m+s_1}, \ldots, m+j+1)\\
&=D^-(m+1, \ldots, m+j)D^-(m+1, \ldots, \widehat{m+s_1}, \ldots, \widehat{m+s_2}, \ldots, m+j+1)\end{aligned}\]
follows from the identity
\[\begin{aligned}&c_{m+s_1,m+s_2}\,\begin{array}{|cc|}c_{m+s_1,m+s_1}&c_{m+s_1,m+j+1}\\c_{m+s_2,m+s_1}&c_{m+s_2,m+j+1}\end{array}-
c_{m+s_1,m+s_1}\,\begin{array}{|cc|}c_{m+s_1,m+s_2}&c_{m+s_1,m+j+1}\\c_{m+s_2,m+s_2}&c_{m+s_2,m+j+1}\end{array}\\
=&\,\begin{array}{|cc|}c_{m+s_1,m+s_1}&c_{m+s_1,m+s_2}\\c_{m+s_2,m+s_1}&c_{m+s_2,m+s_2}\end{array}\,c_{m+s_1,m+j+1}
\end{aligned}\]
using the law of extensible minors (see \cite{bs} and \cite{fm}), by adding rows 
\[m+1,\ldots, \widehat{m+s_1}, \ldots, \widehat{m+s_2}, \ldots, m+j\] and columns 
\[m+1,\ldots, \widehat{m+s_1}, \ldots, \widehat{m+s_2}, \ldots, m+j.\]

Therefore
\[\begin{aligned}&D^-(m+1, \ldots, \widehat{m+s_1},\ldots, m+j)D^-(m+1, \ldots, \widehat{m+s_2}, \ldots, m+j+1)\times\\
&[y_{r_1,m+s_1}\otimes y_{r_2,m+s_2}-y_{r_1,m+s_2}\otimes y_{r_2,m+s_1}]\\
&+D^-(m+1, \ldots, \widehat{m+s_2},\ldots, m+j)D^-(m+1, \ldots, \widehat{m+s_1}, \ldots, m+j+1)\times\\
&[y_{r_1,m+s_2}\otimes y_{r_2,m+s_1}-y_{r_1,m+s_1}\otimes y_{r_2,m+s_2}]\\
&=D^-(m+1, \ldots, m+j)D^-(m+1, \ldots, \widehat{m+s_1}, \ldots, \widehat{m+s_2}, \ldots, m+j+1)\times\\
&[y_{r_1,m+s_1}\otimes y_{r_2,m+s_2}-y_{r_1,m+s_2}\otimes y_{r_2,m+s_1}].
\end{aligned}\]
Using this, we rewrite $\rho_{i_1,j}\otimes \rho_{i_2,j+1}-\rho_{i_1,j+1}\otimes \rho_{i_2,j}$ in the stated form.
\end{proof}

\section{Determinantal identities}
In the previous section we have used determinantal identities to derive congruences modulo $D^+(1, \ldots, i)$ and $D^-(m+1, \ldots, m+j)$. 
Before we can get a description of $G_{ev}$-primitive vectors in $H^0_{G_{ev}}(\la)\otimes \otimes^k Y$, we need to derive additional determinantal identities.

\begin{lm}\label{d2}
Let $2\leq s \leq m$, $x_1, \ldots, x_{s-1}$ and $a_1, \ldots, a_s$ be integers from the set $\{1, \ldots, m\}$. Then there is the following determinantal identity
\[\begin{aligned}&\sum_{t=1}^s (-1)^{s-t} D^+(x_1, \ldots, x_{s-1}, a_t)D^+(a_1, \ldots, \widehat{a_t}, \ldots, a_s)\\
&=D^+(x_1, \ldots, x_{s-1})D^+(a_1, \ldots, a_s).
\end{aligned}\]
\end{lm}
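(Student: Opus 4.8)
The plan is to prove the identity by two successive Laplace (cofactor) expansions. Throughout I use that every entry $c_{p,q}$ with $p,q\in\{1,\ldots,m\}$ is even, so the variables appearing in all of these minors commute and lie in the domain $A_{ev}(m|n)$; consequently all manipulations of classical commutative determinant theory (cofactor expansions, vanishing of a determinant with a repeated row) are available here. Write $X=(x_1,\ldots,x_{s-1})$ and $A=(a_1,\ldots,a_s)$, so that the claim reads $\sum_{t=1}^s(-1)^{s-t}D^+(X,a_t)\,D^+(A\setminus a_t)=D^+(X)\,D^+(A)$, where $A\setminus a_t=(a_1,\ldots,\widehat{a_t},\ldots,a_s)$.

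First I would expand the size-$s$ minor $D^+(X,b)$ along its last column, for an arbitrary column index $b$. Since the entries of that column are $c_{1,b},\ldots,c_{s,b}$, this gives $D^+(X,b)=\sum_{p=1}^s\gamma_p\,c_{p,b}$, where $\gamma_p=(-1)^{p+s}$ times the minor of $C$ on rows $\{1,\ldots,s\}\setminus\{p\}$ and columns $x_1,\ldots,x_{s-1}$. The two crucial points are that each $\gamma_p$ is independent of $b$ (hence of $t$), and that the top coefficient is $\gamma_s=D^+(X)$.

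Substituting $b=a_t$ and interchanging the order of summation turns the left-hand side into $\sum_{p=1}^s\gamma_p\,S_p$, where $S_p=\sum_{t=1}^s(-1)^{s-t}c_{p,a_t}\,D^+(A\setminus a_t)$. Now I would recognize each $S_p$ as a Laplace expansion along the last row: namely $S_p=\det\tilde N^{(p)}$, where $\tilde N^{(p)}$ is the $s\times s$ matrix whose first $s-1$ rows are rows $1,\ldots,s-1$ of $C$ and whose last row is row $p$ of $C$, all restricted to columns $a_1,\ldots,a_s$ (using $(-1)^{s+t}=(-1)^{s-t}$ to match the signs). When $p\le s-1$ the row $p$ occurs twice in $\tilde N^{(p)}$, so $S_p=0$; when $p=s$ the matrix $\tilde N^{(s)}$ has rows $1,\ldots,s$ and columns $a_1,\ldots,a_s$, whence $S_s=D^+(A)$. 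Hence only the $p=s$ term survives, and $\sum_p\gamma_p S_p=\gamma_s S_s=D^+(X)\,D^+(A)$, as desired.

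The computation is otherwise routine; the main point requiring care is the bookkeeping of the two cofactor signs, together with confirming that the surviving coefficient $\gamma_s$ is exactly $D^+(X)$ while every $S_p$ with $p<s$ collapses by the repeated-row principle. An alternative would be to argue by the law of extensible minors used in Lemmas~\ref{l3} and \ref{l4}, reducing to a small base case and then reinstating the shared rows and columns, but the double-expansion argument above seems the most transparent.
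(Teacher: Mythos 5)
Your proposal is correct and follows essentially the same route as the paper's proof: expand $D^+(x_1,\ldots,x_{s-1},a_t)$ along its last column with cofactors independent of $t$, swap the order of summation, and recognize each inner sum as the last-row Laplace expansion of a determinant that vanishes by row repetition unless the surviving cofactor is $D^+(x_1,\ldots,x_{s-1})$. The sign bookkeeping and the identification $\gamma_s=D^+(X)$, $S_s=D^+(A)$ match the paper exactly.
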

\begin{proof}
For each $t$, use the Laplace expansion by the last column to express
\[D^+(x_1, \ldots, x_{s-1}, a_t)=\sum_{u=1}^{s} (-1)^{s-u} c_{ua_t}M_u,\] where $M_u$ is the minor corresponding to the position $(u,s)$. Since each $M_u$ does not depend on $t$, 
we can rewrite the left-hand-side of the above identity as
\[\sum_{u=1}^{s} (-1)^{s-u} M_u [\sum_{t=1}^s (-1)^{s-t} c_{ua_t} D^+(a_1, \ldots, \widehat{a_t}, \ldots, a_s)].\]

The expression $\sum_{t=1}^s (-1)^{s-t} c_{ua_t} D^+(a_1, \ldots, \widehat{a_t}, \ldots, a_s)$ is the Laplace expansion by the last row of the determinant  
$\begin{array}{|ccc|}
c_{1,a_1} & \ldots & c_{1,a_s} \\ 
c_{2,a_1} & \ldots & c_{2,a_s} \\ 
\ldots & \ldots & \ldots \\ 
c_{s-1,a_1} & \ldots & c_{s-1,a_s}\\
c_{u,a_1} & \ldots & c_{u,a_s}
\end{array}$. 
This vanishes if $u\neq s$ and equals $D^+(a_1, \ldots, a_s)$ if $u=s$. Since $M_s=D^+(x_1, \ldots, x_{s-1})$ the claim follows.
\end{proof}

Note that the last identity applied to $s=2$ and $x_1=1$ 
\[D^+(1, a_2) D^+(a_1) -D^+(1, a_1)D^+(a_2) =D^+(1)D^+(a_1,a_2)\]
is essentially the relationship we have used earlier (after relabeling and applying the law of extensible minors).

Combining multiple identities from Lemma \ref{d2} we obtain the following statement. 

\begin{lm}\label{d3}
Let $2\leq s \leq m$, $x_1, \ldots, x_{s-1}$ and $a_1, \ldots, a_s$ be integers from the set $\{1, \ldots, m\}$. Let $\sigma$ denote permutations of $(a_1, \ldots, a_s)$.
Then 
\[\sum_{\sigma} (-1)^{\sigma} \prod_{t=1}^s D^+(x_1, \ldots, x_{t-1}, \sigma(a_t))=D^+(a_1, \ldots, a_s)\prod_{t=1}^{s-1} D^+(x_1, \ldots, x_t).\]
\end{lm}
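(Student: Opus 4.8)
The plan is to argue by induction on $s$, peeling off the single full-size determinant $D^+(x_1,\ldots,x_{s-1},\cdot)$, reducing the remaining product to a length-$(s-1)$ instance handled by the induction hypothesis, and finally collapsing the resulting outer sum by a direct appeal to Lemma \ref{d2}. For the base case $s=2$, interpreting $\sigma(a_t)$ as $a_{\sigma(t)}$, the left-hand side is $D^+(a_1)D^+(x_1,a_2)-D^+(a_2)D^+(x_1,a_1)$, which is exactly the left-hand side of Lemma \ref{d2} for $s=2$; hence it equals $D^+(x_1)D^+(a_1,a_2)$, matching the claim.

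For the inductive step I would partition the permutations $\sigma\in S_s$ according to the index $k=\sigma(s)$ placed in the last slot $t=s$. For each such $\sigma$ the factor indexed by $t=s$ is $D^+(x_1,\ldots,x_{s-1},a_k)$, the unique determinant of full size $s$, while the factors for $t=1,\ldots,s-1$ involve only the shortened sequence $(a_1,\ldots,\widehat{a_k},\ldots,a_s)$. Relabelling $\{1,\ldots,s\}\setminus\{k\}$ to $\{1,\ldots,s-1\}$ order-preservingly turns the inner sum over $\{\sigma:\sigma(s)=k\}$ into a sum over $S_{s-1}$, to which the induction hypothesis applies with the shortened $a$-sequence and the unchanged $x_1,\ldots,x_{s-2}$; this evaluates the inner sum to $(-1)^{s-k}D^+(a_1,\ldots,\widehat{a_k},\ldots,a_s)\prod_{t=1}^{s-2}D^+(x_1,\ldots,x_t)$.

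Substituting back and factoring out $\prod_{t=1}^{s-2}D^+(x_1,\ldots,x_t)$, the left-hand side becomes that product times $\sum_{k=1}^s(-1)^{s-k}D^+(x_1,\ldots,x_{s-1},a_k)D^+(a_1,\ldots,\widehat{a_k},\ldots,a_s)$, which is verbatim the left-hand side of Lemma \ref{d2} with the summation index renamed $k$. Lemma \ref{d2} collapses this inner sum to $D^+(x_1,\ldots,x_{s-1})D^+(a_1,\ldots,a_s)$, and reinstating the factored product gives $D^+(a_1,\ldots,a_s)\prod_{t=1}^{s-1}D^+(x_1,\ldots,x_t)$, as required.

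The main obstacle — really the only delicate point — is the sign bookkeeping in the reduction to $S_{s-1}$, and in particular checking that it is compatible with the sign $(-1)^{s-t}$ appearing in Lemma \ref{d2}. I expect that moving $a_k$ from position $k$ of the sequence to the last slot contributes precisely a factor $(-1)^{s-k}$ relative to the sign of the associated permutation $\tau\in S_{s-1}$: when $\sigma(s)=k$, the last position accounts for exactly $s-k$ inversions (the number of values exceeding $k$ among $\sigma(1),\ldots,\sigma(s-1)$), while the remaining inversions agree with those of the relabelled $\tau$, so that $(-1)^\sigma=(-1)^{s-k}(-1)^\tau$. Verifying this identity is what makes the final collapse via Lemma \ref{d2} go through cleanly.
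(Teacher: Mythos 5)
Your proof is correct and follows essentially the same route as the paper: induction on $s$ with base case Lemma \ref{d2}, grouping permutations by the element placed in the last (full-size-determinant) slot, applying the inductive hypothesis to the inner sum, and collapsing the outer sum with Lemma \ref{d2}. The paper phrases the grouping as a factorization $\tau=\sigma\gamma$ through a cyclic permutation $\gamma$, which is exactly your coset decomposition by $k=\sigma(s)$, and your inversion-count verification that $(-1)^{\sigma}=(-1)^{s-k}(-1)^{\tau}$ supplies the sign bookkeeping the paper leaves implicit.
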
 
\begin{proof}
We proceed by induction on $s$. The base case $s=2$ is settled in Lemma \ref{d2}. Assuming the formulas are valid for $s=u$, consider $s=u+1$. 
Fix $(b_1, \ldots, b_{u+1})$ with entries in the set $\{1, \ldots, m\}$ and consider permutations $\tau$ of $(b_1, \ldots, b_{u+1})$. 
There is a unique cyclic permutation $\gamma$ such that $\gamma(b_{u+1})=\tau(b_{u+1})$ and $\tau\circ \gamma^{-1}$ can be identified with its restriction $\sigma$ on 
the set $\{b_1, \ldots, b_{u+1}\}/ \tau(b_{u+1})$ which is viewed as $(a_1, \ldots, a_u)=(\gamma(b_1), \ldots, \gamma(b_u))$. Thus $\tau=\sigma\gamma$.

Then 
\[\begin{aligned}&\sum_{\tau} (-1)^{\tau} \prod_{t=1}^{u+1} D^+(x_1, \ldots, x_{t-1}, \tau(b_t)) = \\
&\sum_{\gamma} [\sum_{\sigma} (-1)^{\sigma} \prod_{t=1}^u D^+(x_1, \ldots, x_{t-1}, \sigma(a_t))] (-1)^{\gamma} D^+(x_1, \ldots, x_u, \gamma(b_{u+1}))= \\
&\prod_{t=1}^{u-1} D^+(x_1, \ldots, x_t)\sum_{\gamma} (-1)^{\gamma} D^+(x_1, \ldots, x_u, \gamma(b_{u+1}))D^+(a_1, \ldots, a_u))
\end{aligned}\]
by the inductive assumption, which equals 
\[\prod_{t=1}^{u-1} D^+(x_1, \ldots, x_t)D^+(x_1, \ldots, x_u)D^+(b_1, \ldots, b_u, b_{u+1})\]
by Lemma \ref{d2}.
\end{proof}
The following proposition will be applied in the proof of Theorem \ref{p1} to the case when $(a_1, \ldots, a_s)$ are entries in a row of a skew tableau $T^+$, where the first entry is positioned in its 
$u$-th column (and the last entry in its $(u+s-1)$-st column).

\begin{pr}\label{pr1}
Let $1\leq u $ and $2\leq s $ be such that $u+s-1\leq m$ and $a_1, \ldots, a_s$ be integers from the set $\{1, \ldots, m\}$. Let $\sigma$ denote permutations of $(a_1, \ldots, a_s)$.
Then 
\[\begin{aligned}
&\sum_{\sigma} (-1)^{\sigma} \prod_{t=1}^s D^+(1, \ldots, u+t-2, \sigma(a_t))\\
&=D^+(1, \ldots, u-1,a_1, \ldots, a_s)\prod_{t=1}^{s-1} D^+(1, \ldots, u+t-1),
\end{aligned}\]
where $\sigma$ runs through all permutations of $(a_1, \ldots, a_s)$.
\end{pr}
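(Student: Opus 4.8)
The plan is to recognize Proposition \ref{pr1} as the identity of Lemma \ref{d3} \emph{bordered} by the common rows and columns $1, \ldots, u-1$, and to obtain it by the law of extensible minors. Indeed, every determinant appearing in the statement contains the first $u-1$ columns $1, \ldots, u-1$, and each is formed on the first $u-1$ rows together with some further rows. Stripping off these common borders should reduce the claim to a row-translated instance of Lemma \ref{d3}, with the special choice $(x_1, \ldots, x_{s-1}) = (u, u+1, \ldots, u+s-2)$.

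First I would set up the reduced identity. For $1 \le l$ write $E(i_1, \ldots, i_l)$ for the minor of $C$ on rows $u, u+1, \ldots, u+l-1$ and columns $i_1, \ldots, i_l$; since these rows of the generic matrix $C$ themselves form a generic matrix, Lemma \ref{d3} applies verbatim to the $E$-minors (the hypothesis $u+s-1\le m$ guarantees that the required rows genuinely belong to $C$, and the chosen columns $u, \ldots, u+s-2$ lie in $\{1,\ldots,m\}$). Applying Lemma \ref{d3} with $(x_1, \ldots, x_{s-1}) = (u, u+1, \ldots, u+s-2)$ gives
\[
\sum_{\sigma} (-1)^{\sigma} \prod_{t=1}^{s} E(u, \ldots, u+t-2, \sigma(a_t)) = E(a_1, \ldots, a_s) \prod_{t=1}^{s-1} E(u, \ldots, u+t-1),
\]
an identity among minors whose row indices all lie in $\{u, \ldots, u+s-1\}$ and whose column indices all lie outside $\{1, \ldots, u-1\}$, apart from the symbols $a_t$.

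Next I would invoke the law of extensible minors (as in \cite{bs}, already used in Lemmas \ref{l3} and \ref{l4}) to border every minor in the displayed identity by the new rows $1, \ldots, u-1$ and the new columns $1, \ldots, u-1$, so that each minor grows by exactly $u-1$ in both dimensions. Under this bordering $E(u, \ldots, u+t-2, \sigma(a_t))$ becomes $D^+(1, \ldots, u+t-2, \sigma(a_t))$, the factor $E(u, \ldots, u+t-1)$ becomes $D^+(1, \ldots, u+t-1)$, and $E(a_1, \ldots, a_s)$ becomes $D^+(1, \ldots, u-1, a_1, \ldots, a_s)$. Substituting these into the reduced identity produces exactly the asserted formula.

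The one point requiring care — and the main obstacle — is that the bordering columns $1, \ldots, u-1$ must be disjoint from the columns already present, which fails precisely when some $a_t \le u-1$ (or when the $a_t$ repeat). I would resolve this by reading the whole computation in the bideterminant (letterplace) algebra, where $a_1, \ldots, a_s$ are first treated as distinct formal column symbols disjoint from $1, \ldots, u-1$; the law of extensible minors then applies unconditionally, and the identity for arbitrary $a_t \in \{1, \ldots, m\}$ follows by specializing these symbols, a ring homomorphism under which a determinant with a repeated column vanishes consistently on both sides. In particular, when some $a_t \le u-1$ the right-hand side vanishes because $D^+(1, \ldots, u-1, a_1, \ldots, a_s)$ then has two equal columns, and the specialization forces the left-hand side to vanish as well.
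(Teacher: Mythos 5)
Your proposal is correct and follows essentially the same route as the paper: specialize Lemma \ref{d3} to $(x_1,\ldots,x_{s-1})=(u,\ldots,u+s-2)$ and then border every minor by the rows and columns $1,\ldots,u-1$ via the law of extensible minors. Your additional care in first translating the row indices so that the bordering rows and columns are genuinely new, and in handling the degenerate case where some $a_t\leq u-1$, merely makes explicit what the paper's one-line argument leaves implicit.
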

\begin{proof}
Setting $x_{1}=u, \ldots, x_{s-1}=u+s-2$ in Lemma \ref{d3} yields
\[\sum_{\sigma} (-1)^{\sigma} \prod_{t=1}^s D^+(u, \ldots, u+t-2, \sigma(a_t))=D^+(a_1, \ldots, a_s)\prod_{t=1}^{s-1} D^+(u, \ldots, u+t-1)\]
and the statement follows using the law of extensible minors by inserting indices $1, \ldots, u-1$. 
\end{proof}

\begin{rem}\label{rem1}
If there is an index $t$ such that $\sigma(a_t)\leq u+t-2$, then the product $\prod_{t=1}^s D^+(1, \ldots, u+t-2, \sigma(a_t))$ vanishes. Therefore in the sum 
\[\sum_{\sigma} (-1)^{\sigma} \prod_{t=1}^s D^+(1, \ldots, u+t-2, \sigma(a_t))\] of the above proposition we could consider only permutations $\sigma$ that satisfy
$u+t-1\leq \sigma(a_t)$ for each index $t$.
\end{rem}

The next lemma is an analogue of Lemma \ref{d2}.

\begin{lm}\label{d2'}
Let $2\leq s \leq n$, $x_1, \ldots, x_{s-1}$ and $a_1, \ldots, a_s$ be integers from the set $\{m+1, \ldots, m+n\}$. Then there is the following determinantal identity
\[\begin{aligned}&\sum_{t=1}^s (-1)^{s-t} D^-(x_1, \ldots, x_{s-1}, a_t)D^-(a_1, \ldots, \widehat{a_t}, \ldots, a_s)\\
&=D^-(x_1, \ldots, x_{s-1})D^-(a_1, \ldots, a_s).
\end{aligned}\]
\end{lm}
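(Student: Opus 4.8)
The plan is to reproduce the proof of Lemma \ref{d2} almost verbatim; the one genuinely new point is to justify that the $D^-$ determinants obey the ordinary determinantal calculus. Each entry of a $D^-$ determinant has the form $\phi(c_{m+p,a_q})$ with $m+1\le a_q\le m+n$, so the underlying $c_{m+p,a_q}$ has parity $|m+p|+|a_q|\equiv 1+1\equiv 0\pmod 2$ and its image under $\phi$ is again even. Thus all entries of $D^-$ are even elements of $K(m|n)$, they commute, and $D^-$ may be treated as an ordinary commutative determinant: it is multilinear and alternating in its columns, admits Laplace expansion along any row or column, and vanishes as soon as two rows coincide. This is the only step where the statement could conceivably fail, and once it is in place the combinatorics of the argument is forced to agree with that of Lemma \ref{d2}.

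Granting this, I would fix $t$ and expand $D^-(x_1,\ldots,x_{s-1},a_t)$ along its last column:
\[D^-(x_1,\ldots,x_{s-1},a_t)=\sum_{u=1}^s(-1)^{s-u}\,\phi(c_{m+u,a_t})\,N_u,\]
where $N_u$ denotes the minor obtained by deleting the $u$-th row and the last column. The point, exactly as before, is that $N_u$ does not depend on $t$ and that $N_s=D^-(x_1,\ldots,x_{s-1})$. Substituting this into the left-hand side and interchanging the order of summation turns it into
\[\sum_{u=1}^s(-1)^{s-u}\,N_u\Bigl[\sum_{t=1}^s(-1)^{s-t}\,\phi(c_{m+u,a_t})\,D^-(a_1,\ldots,\widehat{a_t},\ldots,a_s)\Bigr].\]

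The inner bracket is precisely the Laplace expansion, along its last row, of the $s\times s$ determinant whose rows are indexed $m+1,\ldots,m+(s-1),m+u$ and whose columns are $a_1,\ldots,a_s$. For $u\le s-1$ this determinant repeats a row and therefore vanishes, whereas for $u=s$ it equals $D^-(a_1,\ldots,a_s)$. Hence only the $u=s$ summand survives, and using $N_s=D^-(x_1,\ldots,x_{s-1})$ the whole expression collapses to $D^-(x_1,\ldots,x_{s-1})\,D^-(a_1,\ldots,a_s)$, which is the asserted identity. As an even shorter route, one can instead relabel the indices $1,\ldots,m$ of Lemma \ref{d2} as $m+1,\ldots,m+n$ and apply the superalgebra homomorphism $\phi$ to both sides; since $\phi$ commutes with the (commutative, because even) determinants involved, this pulls the already-proved identity directly onto the $D^-$ determinants.
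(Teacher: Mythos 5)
Your proof is correct and follows exactly the route the paper intends: the paper states Lemma \ref{d2'} without proof as "an analogue of Lemma \ref{d2}," and your argument is precisely that analogue — Laplace expansion along the last column, interchange of summation, and the vanishing of the determinant with a repeated row — together with the (correct) observation that the entries $\phi(c_{m+p,a_q})$ are even and hence commute, so the ordinary determinantal calculus applies. Nothing further is needed.
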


Assume $1\leq u$ and $2\leq s$ are such that $u+s-1\leq n$.
Fix $(a_1, \ldots, a_s)$ such that $m+1\leq a_1 < \ldots <a_s\leq m+n$ 
and denote by $\mathcal{O}=\mathcal{O}_u(a_1, \ldots, a_s)$ the set of those permutations 
$\sigma$ of $(a_1, \ldots, a_s)$  that satisfy $\sigma(a_t)\leq m+u+t-1$ for each $t=1, \ldots, s$.

Our next goal is to prove the following proposition.

\begin{pr}\label{pr2}
Let $1\leq u $ and $2\leq s$ be such that $u+s-1\leq n$ and $m+1\leq a_1 < \ldots <a_s\leq m+u+s-1$.
Then 
\[\begin{aligned}&\sum_{\sigma\in \mathcal{O}} (-1)^{\sigma} \prod_{t=1}^s  D^-(m+1, \ldots, \widehat{\sigma(a_t)}, \dots, m+u+t-1)\\
=&D^-(m+1, \ldots, \widehat{a_1}, \ldots, \widehat{a_t}, \ldots, \widehat{a_s}, \ldots, m+u+s-1)\times \\
&\prod_{t=1}^{s-1} D^-(m+1, \ldots, m+u+t-1).
\end{aligned}\]
\end{pr}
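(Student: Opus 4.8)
The plan is to prove Proposition \ref{pr2} by induction on $s$, in the same spirit as the passage from Lemma \ref{d2} to Lemma \ref{d3} and Proposition \ref{pr1}, but with Lemma \ref{d2'} playing the role of the base determinantal identity and with the appended-column structure of \ref{pr1} replaced by its complementary ``deletion'' structure. Throughout I abbreviate $R_p=\{m+1,\ldots,m+p\}$, so that $D^-(R_p\setminus\{c\})$ is the deletion determinant occurring in the statement, and I write $X=R_{u+s-1}\setminus\{a_1,\ldots,a_s\}$ for the fully deleted column set of size $u-1$ appearing on the right-hand side. The base case $s=1$ is immediate, since $\mathcal{O}$ then consists of the identity alone and both sides equal $D^-(R_u\setminus\{a_1\})$.

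For the inductive step I peel off the first position. A permutation $\sigma\in\mathcal{O}$ is determined by the value $v=\sigma(a_1)$ it places in position $1$ together with the induced permutation $\sigma'$ of the remaining values $\{a_1,\ldots,a_s\}\setminus\{v\}$. The defining constraint of $\mathcal{O}$ forces $v\le m+u$, hence $v\in\{a_i:a_i\le m+u\}$, and the residual constraints $\sigma'(\cdot)\le m+u+t-1$ for $t=2,\ldots,s$ are exactly those defining $\mathcal{O}_{u+1}(\{a_i\}\setminus\{v\})$ for the reduced problem with parameters $(u+1,s-1)$. This yields a bijection $\sigma\leftrightarrow(v,\sigma')$, and after factoring the first determinant $D^-(R_u\setminus\{v\})$ out of each term I apply the inductive hypothesis to the inner sum over $\sigma'$. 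By construction that inner sum collapses to $D^-\big((R_{u+s-1}\setminus\{a_i\})\cup\{v\}\big)\prod_{t=2}^{s-1}D^-(R_{u+t-1})=D^-(X\cup\{v\})\prod_{t=2}^{s-1}D^-(R_{u+t-1})$, and the product $\prod_{t=2}^{s-1}D^-(R_{u+t-1})$ is common to all $v$ and can be pulled out.

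What remains is to identify
\[
\sum_{v}(\pm1)\,D^-(R_u\setminus\{v\})\,D^-(X\cup\{v\})=D^-(X)\,D^-(R_u),
\]
which I recognize as an instance of Lemma \ref{d2'}: taking the prefix $(x_1,\ldots,x_{u-1})$ to be the elements of $X$ and the appended family $(a_1,\ldots,a_u)$ to be the elements of $R_u$, Lemma \ref{d2'} reads $\sum_{t}(-1)^{u-t}D^-(X,a_t)D^-(R_u\setminus\{a_t\})=D^-(X)D^-(R_u)$. The terms with $a_t\in X$ vanish, because $D^-(X,a_t)$ then has a repeated column, so the sum automatically restricts to $a_t\in R_u\setminus X=\{a_i:a_i\le m+u\}$, matching the range of $v$ above; multiplying back the common product recovers exactly the right-hand side $D^-(X)\prod_{t=1}^{s-1}D^-(R_{u+t-1})$ of Proposition \ref{pr2}. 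The degenerate range $u=1$, where Lemma \ref{d2'} does not formally apply, is checked directly, and the law of extensible minors is invoked as in Proposition \ref{pr1} to reinstate any shared prefix.

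I expect the main obstacle to be the sign bookkeeping. One must verify that the sign $(-1)^{\sigma}$ of a permutation of the sequence $(a_1,\ldots,a_s)$ decomposes, under the peeling $\sigma\leftrightarrow(v,\sigma')$, as the product of $(-1)^{\sigma'}$ with the unshuffle sign of extracting $v$ into position $1$, and that this unshuffle sign coincides with the sign $(-1)^{u-t}$ dictated by Lemma \ref{d2'} once the column reorderings passing between $D^-(X,a_t)$ and $D^-(X\cup\{v\})$, and between $D^-(R_u\setminus\{a_t\})$ and $D^-(R_u\setminus\{v\})$, are taken into account (the columns must be restored to increasing order to match the convention $\widehat{\sigma(a_t)}$ in the statement). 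Keeping these reorderings consistent and confirming that the restricted set $\mathcal{O}$ is precisely what Lemma \ref{d2'} produces after the vanishing of repeated-column terms is where the care lies; the determinantal content itself is supplied entirely by Lemma \ref{d2'}.
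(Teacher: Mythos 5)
Your proposal is correct, and it reaches the identity by what is in substance the paper's own mechanism --- iterated use of Lemma \ref{d2'} --- but organized in the opposite direction: the paper starts from the right-hand side and unfolds it by $s-1$ successive applications of Lemma \ref{d2'}, accumulating the chosen values $d_1,\ldots,d_t$ onto the complementary index set $(b_1,\ldots,b_{u-1})$ and computing one global sign $\epsilon$ at the very end by an explicit transposition count, whereas you contract the left-hand side by induction on $s$, peeling off position $1$ and invoking Lemma \ref{d2'} exactly once per inductive step with the sign handled locally. The one step you defer, the sign bookkeeping, does close: writing $X=\{m+1,\ldots,m+u+s-1\}\setminus\{a_1,\ldots,a_s\}$ and $v=a_i=m+c$ with $c\leq u$, the unshuffle sign from $(-1)^{\sigma}=(-1)^{i-1}(-1)^{\sigma'}$ is $(-1)^{i-1}$, Lemma \ref{d2'} applied to the prefix $X$ and the family $(m+1,\ldots,m+u)$ contributes $(-1)^{u-c}$, and restoring $D^-(X,v)$ to increasing order costs $(-1)^{\#\{x\in X:\,x>v\}}=(-1)^{u-1-c+i}$ since $\#\{x\in X: x>v\}=(m+u+s-1-v)-(s-i)$; the product $(-1)^{u-c}\cdot(-1)^{u-1-c+i}=(-1)^{i-1}$ matches, so your contraction over $v$ is precisely Lemma \ref{d2'} with the repeated-column terms discarded. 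Your version has the advantage that the surviving index set $\mathcal{O}$ and the sign emerge step by step rather than from the paper's terminal count of shifts; one small remark is that the appeal to the law of extensible minors at the end of your sketch is not actually needed in Proposition \ref{pr2} (no common prefix is stripped here, unlike in Proposition \ref{pr1}), and the degenerate case $u=1$ is, as you say, immediate since $X=\emptyset$.
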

\begin{proof}
For simplicity, rewrite 
$D^-(m+1, \ldots, \widehat{a_1}, \ldots, \widehat{a_t}, \ldots, \widehat{a_s}, \ldots, m+u+s-1)$ as $D^-(b_1, \ldots, b_{u-1})$, 
where $m+1\leq b_1 <\ldots <b_{u-1}\leq m+u+s-1$.

In the first step, using Lemma \ref{d2'} we rewrite 
\[\begin{aligned}&D^-(b_1, \ldots, b_{u-1})D^-(m+1, \ldots, m+u)\\
&=\sum_{d_1=1}^{m+u} (-1)^{m+u-d_1} D^-(b_1, \ldots, b_{u-1}, d_1) D^-(m+1, \ldots, \widehat{d_1}, m+u).
\end{aligned}\]
Proceeding by induction, in the $t$-th step we rewrite
\[\begin{aligned}&D^-(b_1, \ldots, b_{u-1}, d_1, \ldots, d_{t-1})D^-(m+1, \ldots, m+u+t-1)\\
&=\sum_{d_t=1}^{m+u} (-1)^{m+u+t-1-d_t} D^-(b_1, \ldots, b_{u-1}, d_1, \ldots, d_t) D^-(m+1, \ldots, \widehat{d_t}, m+u+t-1).
\end{aligned}\]
Combining the above expresssions, after $s-1$ steps we obtain
\[\begin{aligned}
&D^-(b_1, \ldots, b_{u-1})\prod_{t=1}^{s-1} D^-(m+1, \ldots, m+u+t-1)\\
&=\sum_{d_1=1}^{m+u}\ldots \sum_{d_{s-1}=1}^{m+u+s-2} (-1)^{(m+u)+\ldots + (m+u+s-2) -d_1-\ldots d_{s-1}}\times \\
&D^-(b_1, \ldots, b_{u-1}, d_1, \ldots, d_{s-1})\prod_{t=1}^{s-1} D^-(m+1, \ldots, \widehat{d_t}, m+u+t-1).
\end{aligned}\]
Applying Lemma \ref{d2'} one more time we obtain 
\[\begin{aligned}&D^-(b_1, \ldots, b_{u-1}, d_1, \ldots, d_{s-1})D^-(m+1, \ldots, m+u+s-1)=(-1)^{m+u+s-1-d_s}\times \\
&D^-(b_1, \ldots, b_{u-1}, d_1, \ldots, d_{s-1}, d_s)D^-(m+1, \ldots, \widehat{d_s}, \ldots, m+u+s-1)\\
=&(-1)^{m+u+s-1-d_s}\epsilon D^-(m+1, \ldots, m+u+s-1) D^-(m+1, \ldots, \widehat{d_s}, \ldots, m+u+s) 
\end{aligned}\]
for a unique $d_s$. Here
\[D^-(b_1, \ldots, b_{u-1}, d_1, \ldots, d_{s-1})=(-1)^{m+u+s-1-d_s}\epsilon D^-(m+1, \ldots, \widehat{d_s}, \ldots, m+u+s),\] 
where $\epsilon=\pm 1$ is determined by 
\[D^-(b_1, \ldots, b_{u-1}, d_1, \ldots, d_{s-1}, d_s)=\epsilon D^-(m+1, \ldots, m+u+s-1).\]
Additionally, the only nonzero summands in the above sum for \[D^-(b_1, \ldots, b_{u-1})\prod_{t=1}^{s-1} D^-(m+1, \ldots, m+u+t-1)\] are 
given by those $(d_1, \ldots, d_s)$ that are permutations of $(a_1, \ldots, a_s)$ satisfying $m+1 \leq d_t\leq m+s+t-1$ for each $t=1, \ldots, s$.
Hence $(d_1, \ldots, d_s)=\sigma(a_1, \ldots, a_s)$ for $\sigma\in \mathcal{O}$.

Thus
\[\begin{aligned}
&D^-(b_1, \ldots, b_{u-1})\prod_{t=1}^{s-1} D^-(m+1, \ldots, m+u+t-1)\\
&=\sum_{d_1=1}^{m+u}\ldots \sum_{d_{s}=1}^{m+u+s-1} (-1)^{(m+u)+\ldots + (m+u+s-1) -d_1-\ldots d_s}\epsilon\times \\
&\qquad \qquad \qquad \qquad \prod_{t=1}^{s} D^-(m+1, \ldots, \widehat{d_t}, m+u+t-1).
\end{aligned}\]

To determine the sign $\epsilon$, consider $D^-(m+1, \ldots, m+u+s-1)$, shift the symbols at positions $a_s+1$ through $m+u+s-1$ one place to the left and move the symbol $a_s$ to the position $m+u+s-1$. 
This is accomplished by applying $m+u+s-1-a_s$ transpositions. Then proceed by induction from $t=s$ to $t=1$, in the $t$-th step shift the entries at positions $a_t+1$ 
through $m+u+t-1$ one place to the left and move the symbol $a_{s-1}$ to the position $m+u+t-1$, by applying $m+u-t-1-a_t$ transpositions. 
After $s$ steps we arrive at $D^-(b_1, \ldots, b_u,a_1, \ldots, a_s)$. This shows that 
\[\begin{aligned}&D^-(b_1, \ldots, b_u,d_1, \ldots, d_s)=(-1)^{\sigma} D^-(b_1, \ldots, b_u,a_1, \ldots, a_s)\\
&= (-1)^{(m+u)+\ldots + (m+u+s-1) -d_1-\ldots d_s} (-1)^{\sigma} D^-(m+1, \ldots, m+u+s-1).
\end{aligned}\]

Consequently, 
\[\begin{aligned}
&D^-(b_1, \ldots, b_{u-1})\prod_{t=1}^{s-1} D^-(m+1, \ldots, m+u+t-1)\\
&=\sum_{\sigma\in \mathcal{O}} (-1)^{\sigma} \prod_{t=1}^{s} D^-(m+1, \ldots, \widehat{\sigma(a_t)}, m+u+t-1).
\end{aligned}\]
\end{proof}

\begin{rem}\label{rem2}
We define the expression $D^-(m+1, \ldots, \widehat{a}, \ldots, m+j)$ such that $m+j<a$ to be equal to zero. Therefore in Proposition \ref{pr2} we can
replace the summation over the set $\mathcal{O}$ by the summation over the set of all permutations $\sigma$ of $(a_1, \ldots, a_s)$.
\end{rem} 

\section{Operators $\sigma^+$, $\sigma^-$ and $\sigma$ on $\rho_{I|J}$}

From now on, assume that $\la=(\la^+|\la^-)$ is a $(m|n)$-hook partition, unless stated otherwise. 

In order to combine various previously defined operators $\sigma^+_i$ and $\sigma^-_j$, we need to introduce the terminology of skew diagrams and tableaux. 

\subsection{General setup for diagrams and tableaux}\label{gensetup}

Let $\alpha$ and $\beta$ be partitions. We define the partial order $<$ on partitions by requiring that $\beta<\alpha$ if and only if $\beta_i\leq \alpha_i$ for every $i$. 

Let $\beta<\alpha$ and $\mathcal{D}$ be the diagram corresponding to the skew partition $\alpha/ \beta$. The canonical column skew tableau $D^+_{can}$ of the shape $\alpha/ \beta$ is defined in such a way that its $j$-th column is filled with entries $j$ for each $j$. Analogously, the canonical row skew tableau $D^-_{can}$ of the shape $\alpha/ \beta$ is defined so that its $i$-th row is filled with entries $m+i$ for each $i$. 

Let $\la=(\la^+|\la^-)$ be a $(m|n)$-hook partition, and a partition $\mu$ is such that $\mu<\la^+$.
Let $[\la]$ be the diagram corresponding to $\la=(\la^+|\la^-)$ and $[\la'/ \mu']$ be a skew diagram corresponding to the skew partition $\la'/ \mu'$. 
We will denote by $T$ a skew tableau of shape $\la'/ \mu'$ that is filled with (possibly repeated) entries from the set $m+1, \ldots, m+n$ such that its content 
equals $(0|\nu)$, where $\nu$ is a partition. Additionally, we will assume that, for each $j>m$ and $1\leq i\leq n$, the entry at the position $[ij]$ of the tableau $T$ 
is equal to $m+i$. These assumptions imply that $\la^-<\nu$. 
Such tableau $T$ consists of two parts. The first part $T^+$ is a tableau of the shape $(\la^+/ \mu)'$ and content $(0|\nu / \omega)$.
The second part is the canonical row tableau $L^-_{can}$ coresponding to the diagram $[\la^-]$. Therefore $\omega=\la^-$ as partitions.

We will denote by $T^{opp}$ a tableau of the shape $\nu$ and content $(\la^+ /\mu|\la^-)$. 
Additionally, we will assume that for $1\leq i\leq n$ and each $1\leq j\leq \la^-_i$ the entry at the position $[ij]$ of the tableau $T^{opp}$ is equal to $m+i$. 
Such tableau $T^{opp}$ consists of two parts. The first part is the canonical row tableau $L^-_{can}$ corresponding to the diagram $[\omega]$. The second part is a skew tableau $T^-$ of the shape $\nu/ \omega$ and content
$(\la^+ /\mu|0)$. Although $\omega=\la^-$ as partitions, we distinguish them to differentiate between parts of tableaux $T$ and $T^{opp}$.

Denote by $\mathcal{D}^+$ the diagram $[\la^{+'}/ \mu']$ coresponding to $T^+$ and by $\mathcal{D}^-$ the diagram $[\nu/ \omega]$ corresponding to $T^-$.

\begin{ex}
To illustrate the above notation, consider $m=n=3$,
$\la=(6,4,4,3,1)$, $\la^+=(6,4,4)$, $\la^-=(3,1)$, $(\la^-)'=(2,1,1)$, $\mu=(4,2,1)$ and $\nu=(5,4,2)$. Then the tableau
\[\young(aaaabb,aabb,abbb,ccc,c)\]
is such that the entries $a$ represent the shape $\mu$; entries $b$ represent the shape $\la^+/\mu$; and entries $c$ represent the shape $(\la^-)'$.

Transpose of this tableau is 
\[\young(aaacc,aabc,abbc,abb,b,b)\]
and its entries $a$ represent the shape $\mu'$; entries $b$ represent the shape $(\la^+)'/\mu'$; and the entries $c$ represent the shape $\la^-$. 

The tableau 
\[\young(ccbbb,cbbb,cb)\]
is such that the entries $c$ represent the shape $\la^-=\omega$, and the entries $b$ represent the shape $\nu/\omega$.

One example of the tableau $T$ is
\[\young(:::44,::55,:446,:65,4,5)\]
and the corresponding tableau $T^+$ is 
\[\young(::5,:44,:65,4,5)\]
An example of the tableau $T^{opp}$ is
\[\young(44321,5331,62)\]
and the corresponding tableau $T^-$ is 
\[\young(::321,:331,:2).\]
\end{ex}

\subsection{Definition of operators and positioning maps}\label{tensorsetup}

In order to define operator $\sigma^+$ on $\rho_{I|J}$, we need to consider a positioning map
$P^{+}$ and tableaux $T^+_{can}$ and $T^{+}$ of shape $(\la^+/ \mu)'$. Fix a multiindex $(I|J)$ of length $k$. 
Choose a positioning map $P^{+}:\{1, \ldots, k\} \to \mathcal{D}^+$ that is a bijection and satisfies the property that $P^+(l)=[s,r]$ implies $i_l=r$.
The tableau $T^+_{can}$ is the canonical tableau defined by the property that for each $1\leq i\leq m$ its $i$-th column consists of entries equal to $i$.
The tableau $T^{+}$ is given by $J$ and $P^+$ in such a way that its entry at the position $P^+(l)=[s,r]$ equals $j_l$ (and $r=i_l$).
Let $X^+$ be the subgroup of the symmetric group $\Sigma_k$ consisting of row permutations of $\mathcal{D}^+$. 
For $\sigma\in X^+$ denote by $\sigma(T^+_{can})$ the tableau obtained by applying permutation $\sigma$ to the entries of $T^+_{can}$.

The action of $\sigma\in X^+$ on $\rho_{I|J}$ is given as $\sigma.\rho_{I|J}=\rho_{K|J}$, where for each $1\leq l\leq k$ the index $k_l$ is the entry at the position $P^{+}(l)$ 
 in $\sigma(T^+_{can})$. Finally, the operator $\sigma^+$ is given as 
\[\sigma^+=\sum_{\sigma\in X^+} (-1)^\sigma \sigma.\]

Analogously, in order to define operator $\sigma^-$ on $\rho_{I|J}$ we need to consider a positioning map
$P^{-}$ and tableaux $T^-_{can}$ and $T^{-}$ of shape $\nu/ \omega$. Choose a positioning map 
$P^{-}:\{1, \ldots, k\} \to \mathcal{D}^-$ that is a bijection and satisfies the property that $P^-(l)=[r,s]$ implies $j_l=m+r$.
The tableau $T^-_{can}$ is the canonical tableau that is defined by the property that for each $1\leq j\leq n$ its $j$-th row consists of entries equal to $m+j$.
The tableau $T^{-}$ is given by $I$ and $P^-$ in such a way that its entry at the position $P^-(l)=[r,s]$ equals $i_l$ (and $m+r=j_l$).
Let $X^-$ be the subgroup of the symmetric group $\Sigma_k$ consisting of column permutations of $\mathcal{D}^-$. For
$\sigma\in X^-$ denote by $\sigma(T^-_{can})$ the tableau obtained by applying permutation $\sigma$ to the entries of $T^-_{can}$.

The action of  $\sigma\in X^-$ on $\rho_{I|J}$ is given as $\sigma.\rho_{I|J}=\rho_{I|L}$, where for each $1\leq a\leq k$ the index $l_a$ is the entry at the position 
$P^{-}(a)$ in $\sigma(T^-_{can})$.
Finally, the operator $\sigma^-$ is given as 
\[\sigma^-=\sum_{\sigma\in X^-} (-1)^\sigma \sigma.\]

Note the difference between definitions of $\sigma^+$ and $\sigma^-$. It is due to the presence of the transpose in the shape of $T^+$. Therefore the canonical tableau $T^+_{can}$
is filled by columns and $\sigma^+$ involves row permutations, while the canonical tableau $T^-_{can}$ is filled by rows and $\sigma^-$ involves column permutations.

\begin{ex}
If $T^+_{can}=\young(:2,12,12)$, then 
$\sigma^+T^+_{can}=\young(:2,12,12)-\young(:2,21,12)-\young(:2,12,21)+\young(:2,21,21)$.
If $T^-_{can}=\young(333,44)$, then $\sigma^-T^-_{can}=\young(333,44)-\young(433,34)-\young(343,43)+\young(443,33).$
\end{ex}

Denote by $S^+_s=\{[s, p_s], \ldots, [s,p_s+\ell^+_s-1]\}$ the set of entries in the $s$-th row of $\mathcal{D}^+$, by 
$SP^+_s$ the set $(P^+)^{-1}(S^+_s)$ and by $\widehat{SP^+_s}$ the complement of $SP^+_s$ in $\{1, \ldots, k\}$. 
List elements of $SP^+_s$ in the fixed order $\{q_1, \ldots, q_{\ell^+_s}\}$, where $q_t=(P^+)^{-1}([s, p_s+t-1])$. Then $i_{q_t}=p_s+t-1$ for each $1\leq t \leq \ell^+_s$.

Denote by $E^+_s$ the embedding of $\otimes_{q\in SP^+_s} Y$ to $\otimes^k Y$ such that the component $Y$ corresponding to $q\in SP^+_s$ is mapped identically to the 
$t$-th component of $\otimes^k Y$ while other components of $\otimes^k Y$ corresponding to $q\in \widehat{SP^+_s}$ are all equal to $1$. Then the map
$\otimes_s E_s$ is an isomorphism between $\otimes_s (\otimes_{q\in SP^+_s} Y)$ and $\otimes^k Y$.
Using this isomorphism we identify $\rho_{I|J}$ with $\otimes_s \rho^+_s$, where $\rho^+_s=\otimes_{q\in SP^+_s} \rho_{i_t,j_t}$ (each $i_t=s$ here).
Denote by $X^+_s$ a subset of $X^+$ consisting of those $\sigma_s$ that only permute entries in the $s$-th row of $\mathcal{D}^+$. Then the action of $\sigma_s$ on 
$\otimes^k Y$ induces the corresponding action on $\otimes_{q\in SP^+_s} Y$ and the action of $\sigma_s$ on $\rho_{I|J}$ restricts to the action on $\rho^+_s$.
Morever, every $\sigma\in X^+$ can be written as a product $\sigma=\prod_s \sigma_s$ and the action of $\sigma$ on $\otimes^k Y$ and $\rho_{I|J}$ breaks down to the 
products of the commuting actions of $\sigma_s$. Therefore, it is enough to analyze actions of $\sigma_s\in X^+_s$ separately.  

Let ${R}=(R_1, \ldots, R_{\ell^+_s})$ be an arbitrary $\ell^+_s$-tuple of integers such that $i_{q_t}=p_s+t-1\leq R_t$ for each $1\leq t \leq \ell^+_s$. 
If $\sigma_s\in SP^+_s$, then $\sigma_s.(R_1, \ldots, R_{\ell^+_s})$ is the $\ell^+_s$-tuple obtained by permuting entries of $R$ by $\sigma_s$.
We call $R$ ordered provided $R_1<R_2<\ldots <R_{\ell^+_s}$. Denote by $\mathcal{O}(R_1, \ldots, R_{\ell^+_s})$ 
the set of all ${\ell^+_s}$-tuples $(r_1, \ldots, r_{\ell^+_s})$ that are permutations of $(R_1, \ldots, R_{\ell^+_s})$. 

Denote \[DI^+(R_1, \ldots, R_{\ell^+_s})=D^+(1, \ldots, i_{q_1}-1, R_1)\ldots D^+(1, \ldots, i_{q_{\ell^+_s}}-1, R_{\ell^+_s})\]
and define the action of $\sigma_s\in SP^+_s$ on $DI^+(R_1, \ldots, R_{\ell^+_s})$ by
\[\sigma_s.DI^+(R_1, \ldots, R_{\ell^+_s})=DI^+(\sigma_s.(R_1, \ldots, R_{\ell^+_s})).\]

Finally, denote $\sigma^+_s=\sum_{\sigma_s\in X^+_s} (-1)^{\sigma_s} \sigma_s$.

Analogously, denote by $S^-_r=\{[p_r, r], \ldots, [p_r+\ell^-_r-1,r]\}$ the set of entries in the $r$-th column of $\mathcal{D}^-$, by 
$SP^-_r$ the set $(P^-)^{-1}(S^-_r)$ and by $\widehat{SP^-_r}$ the complement of $SP^-_r$ in $\{1, \ldots, k\}$. 
List elements of $SP^-_r$ in the fixed order $\{q_1, \ldots, q_{\ell^-_r}\}$, where $q_t=(P^-)^{-1}([p_r+t-1,r])$. Then $j_{q_t}=p_r+t-1$ for each $1\leq t \leq \ell^-_r$.

Denote by $E^-_r$ the embedding of $\otimes_{q\in SP^-_r} Y$ to $\otimes^k Y$ such that the component $Y$ corresponding to $q\in SP^-_r$ is mapped identically to the 
$t$-th component of $\otimes^k Y$ while other components of $\otimes^k Y$ corresponding to $q\in \widehat{SP^-_r}$ are all equal to $1$. Then the map
$\otimes_r E_r$ is an isomorphism between $\otimes_s (\otimes_{q\in SP^-_r} Y)$ and $\otimes^k Y$.
Using this isomorphism we identify $\rho_{I|J}$ with $\otimes_r \rho^-_r$, where $\rho^-_r=\otimes_{q\in SP^-_r} \rho_{i_t,j_t}$ (each $j_t=r$ here).
Denote by $X^-_r$ a subset of $X^-$ consisting of those $\sigma_r$ that only permute entries in the $r$-th column of $\mathcal{D}^-$. Then the action of $\sigma_r$ on 
$\otimes^k Y$ induces the corresponding action on $\otimes_{q\in SP^-_r} Y$ and the action of $\sigma_r$ on $\rho_{I|J}$ restricts to the action on $\rho^-_r$.
Morever, every $\sigma\in X^-$ can be written as a product $\sigma=\prod_r \sigma_r$ and the action of $\sigma$ on $\otimes^k Y$ and $\rho_{I|J}$ breaks down to the 
products of the commuting actions of $\sigma_r$. Therefore, it is enough to analyze actions of $\sigma_r\in X^-_r$ separately.  

Let ${S}=(S_1, \ldots, S_{\ell^-_r})$ be an arbitrary $\ell^-_r$-tuple of integers such that $S_t\leq j_{q_t}=p_r+t-1$ for each $1\leq t \leq \ell^-_r$. 
If $\sigma_r\in SP^-_r$, then $\sigma_r.(S_1, \ldots, S_{\ell^-_r})$ is the $\ell^-_r$-tuple obtained by permuting entries of $S$ by $\sigma_r$.
We call $S$ ordered provided $S_1<S_2<\ldots <S_{\ell^-_r}$. Denote by $\mathcal{O}(S_1, \ldots, S_{\ell^-_r})$ 
the set of all ${\ell^-_r}$-tuples $(s_1, \ldots, s_{\ell^-_r})$ that are permutations of $(S_1, \ldots, S_{\ell^-_r})$. 

Denote \[\begin{aligned}DJ^-(S_1, \ldots, S_{\ell^-_r})=&(-1)^{S_1+\ldots+ S_{\ell^-_r}+j_{q_1}+\ldots j_{q_{\ell^-_r}}}D^-(m+1, \ldots, \widehat{m+S_1}, \ldots, m+j_{q_1}) \\
&\ldots  D^-(m+1, \ldots, \widehat{m+S_{\ell^-_r}}, \ldots, m+j_{q_{\ell^-_r}})
\end{aligned}\]
and define the action of $\sigma_r\in SP^-_r$ on $DJ^-(S_1, \ldots, S_{\ell^-_r})$ by
\[\sigma_r.DJ^-(S_1, \ldots, S_{\ell^-_r})=DJ^-(\sigma_r.(S_1, \ldots, S_{\ell^-_r})).\]

Finally, denote $\sigma^-_rs=\sum_{\sigma_r\in X^-_r} (-1)^{\sigma_r} \sigma_r$.
and define the operator $\sigma_{\pm}$ as a composition of $\sigma^-\circ \sigma^+$. 
In what follows we will write $\sigma$ for $\sigma_{\pm}$. From the context we should be able to distinguish this operator $\sigma$ from elements
$\sigma\in X^+, X^-$.

\subsection{Even-primitive vectors in $H^0_{G_{ev}}(\la)\otimes \otimes^k Y$} 

\begin{teo}\label{p1}
Let $\la$ be a $(m|n)$-hook partition and $(I|J)$ be admissible multiindex of length $k$. Then, for any choice of positioning maps $P^{+}$ and $P^{-}$,
the vector $v_{I|J}\sigma.\rho_{I|J}$ 
(which is an integral linear combination of $\pi_{K|L}$, where $cont(K|L)=cont(I|J)$) is a $G_{ev}$-primitive vector in $H^0_{G_{ev}}(\la)\otimes \otimes^k Y$.
\end{teo}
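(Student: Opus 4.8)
The plan is to show that $v_{I|J}\,\sigma.\rho_{I|J}$ is annihilated by every odd... no, by every \emph{even} lowering superderivation, i.e.\ by $_{ij}D$ for $i>j$ with both indices in $\{1,\ldots,m\}$ or both in $\{m+1,\ldots,m+n\}$, and that it lies in the correct subspace $H^0_{G_{ev}}(\la)\otimes \otimes^k Y$. I would begin by recording the two reductions already built into the setup of Subsection~\ref{tensorsetup}: the operator $\sigma=\sigma^-\circ\sigma^+$ factors through the commuting row-operators $\sigma^+_s$ (one per row of $\mathcal{D}^+$) and column-operators $\sigma^-_r$ (one per column of $\mathcal{D}^-$), and the identification $\rho_{I|J}=\otimes_s\rho^+_s=\otimes_r\rho^-_r$ localizes the whole problem: it suffices to understand the effect of a single $\sigma^+_s$ on the single tensor block $\rho^+_s$, and likewise for $\sigma^-_r$ on $\rho^-_r$. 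This is the organizational heart of the argument, and it is what makes the determinantal identities of Section~3 directly applicable.

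First I would treat the $\sigma^+$ part. Expanding $\sigma^+_s.\rho^+_s$ and collecting the $D^+$-factors produces, for each choice of the inner summation indices $R=(R_1,\ldots,R_{\ell^+_s})$, exactly the antisymmetrized product $\sum_{\sigma_s}(-1)^{\sigma_s}\,\sigma_s.DI^+(R)$. Now Proposition~\ref{pr1} (together with Remark~\ref{rem1}, which lets me drop the admissibility constraint on the summation and run over all permutations) evaluates this antisymmetrization to a single bideterminant $D^+(1,\ldots,u-1,R_1,\ldots,R_{\ell^+_s})$ times the staircase product $\prod_{t} D^+(1,\ldots,u+t-1)$, where $u=p_s$ is the column of the first cell of the $s$-th row. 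The key consequence is that after applying $\sigma^+_s$ the coefficient in front of each $Y$-monomial is a genuine bideterminant of $H^0_{G_{ev}}$-type, and the extra antisymmetrization over the indices $R_t$ makes the result behave, after multiplication by $v_{I|J}$, like a product of columns of a bideterminant of shape $\la^+$; this is precisely the structural form needed for $G_{ev}$-primitivity in the ``$+$'' directions. Symmetrically, for the $\sigma^-$ part I would invoke Proposition~\ref{pr2} (with Remark~\ref{rem2} removing the $\mathcal{O}$-restriction), which antisymmetrizes the corresponding $DJ^-$-products into a single $D^-$-bideterminant times the staircase $\prod_t D^-(m+1,\ldots,m+u+t-1)$, handling the ``$-$'' directions.

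For the actual annihilation by $_{ij}D$ with $i>j$, I would use that each $\pi_{K|L}$ is already even-primitive in $K(m|n)$ (the statement recalled after the definition of $\pi_{I|J}$, coming from Proposition~3.4 and Lemma~4.1 of \cite{fm}), so $v_{I|J}\,\sigma.\rho_{I|J}$, being an integer linear combination of such $\pi_{K|L}$ with common content, is automatically even-primitive in $K(m|n)$; the real work is to promote ``in $K(m|n)$'' to ``in $A(m|n)$'', equivalently into $H^0_{G_{ev}}(\la)\otimes\otimes^k Y$. Here the denominators in $v_{I|J}$ are exactly $\prod_s D^+(1,\ldots,i_s)\prod_s D^-(m+1,\ldots,m+j_s-1)$, and the staircase products produced by Propositions~\ref{pr1} and \ref{pr2} are designed to cancel these denominators cell-by-cell, leaving an honest product of bideterminants of shape $\la$ tensored with a $Y$-monomial. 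I expect \textbf{this cancellation/integrality step to be the main obstacle}: one must check that the staircase factors $\prod_t D^+(1,\ldots,u+t-1)$ and $\prod_t D^-(m+1,\ldots,m+u+t-1)$ coming out of the antisymmetrization match the denominators contributed by the cells of the $s$-th row (respectively $r$-th column) under the positioning maps, so that no fractional $D^+$ or $D^-$ survives; this is where the admissibility of $(I|J)$ and the compatibility conditions $i_l=r$ for $P^+(l)=[s,r]$ (and $j_l=m+r$ for $P^-(l)=[r,s]$) are used. Once integrality is verified on each block and the blocks are reassembled via $\otimes_s E^+_s$ and $\otimes_r E^-_r$, the vector lands in $A(m|n)$, hence in $H^0_{G_{ev}}(\la)\otimes\otimes^k Y$, completing the proof.
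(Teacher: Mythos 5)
Your plan follows the paper's own proof essentially step for step: decompose $\sigma$ into the commuting block operators $\sigma^+_s$ and $\sigma^-_r$ acting on $\rho^+_s$ and $\rho^-_r$, apply Propositions \ref{pr1} and \ref{pr2} to antisymmetrize the $DI^+$ and $DJ^-$ factors into a single (bi)determinant times the staircase product that cancels the denominator of $v_{I|J}$, and deduce even-primitivity from the fact that the result is a linear combination of the already-primitive $\pi_{K|L}$ which now visibly lies in $H^0_{G_{ev}}(\la)\otimes\otimes^k Y$. You have correctly identified the cancellation/integrality step as the substantive content, which is exactly where the paper's proof spends its effort.
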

\begin{proof}
Write 
\[\begin{aligned} v_{I|J}\sigma .\rho_{I|J}= &\frac{\sigma .\rho_{I|J}}{\prod_{i=1}^{m-1} D^+(1, \ldots, i)^{\la_{i+1}-\mu_i}\prod_{j=1}^{n-1} D^-(m+1,\ldots, m+j)^{\nu_{j+1}-\la^-_{j}}}\times \\&D^+(1, \ldots, m)^{\mu_m}D^-(m+1,\ldots, m+n)^{\la^-_{n}}.
\end{aligned}\]

We will show that $\sigma .\rho_{I|J}$ is a multiple of 
\[\prod_{i=1}^{m-1} D^+(1, \ldots, i)^{\la_{i+1}-\mu_i}\prod_{j=1}^{n-1} D^-(m+1,\ldots, m+j)^{\nu_{j+1}-\la^-_{j}}\]
and therefore $v_{I|J}\sigma .\rho_{I|J}$ is a linear combination of products of (polynomial) determinants and elements from $\bigotimes^k Y$.
Moreover, we will see that the products of determinants, appearing in this description, are bideterminants of the type
\[\prod_{a=1}^m D^+(a)^{\la_a-\la_{a+1}}\prod_{b=1}^n D^-(b)^{\la_{m+b} - \la_{m+b+1}}, \] and are therefore elements of $H^0_{G_{ev}}(\la)$.
We infer from there that $v_{I|J}\sigma .\rho_{I|J}\in H^0_{G_{ev}}(\la)\otimes \otimes^k Y$.

We will use the determinantal identities of Propositions \ref{pr1} and \ref{pr2} to replace determinants. These identities keep the sizes and multiplicities of determinants and therefore keep the shape of the bideterminant that is the product of such determinants. For example, to illustrate this, using the identity of type 
$D^+(i)^{\la_i-\mu_i}=D^+(1, \ldots, i)^{\la_{i+1}-\mu_i}D^+(i)^{\la_i-\la_{i+1}}$ we could rewrite $\frac{D^+(i)^{\la_i-\mu_i}}{D^+(1, \ldots, i)^{\la_{i+1}-\mu_i}}$ 
as $D^+(i)^{\la_i-\la_{i+1}}$.

Using the definition of $\rho$ we express 
\[\begin{aligned}&\rho^+_s=\sum_{r_1=i_{q_1}}^m \ldots \sum_{r_{\ell^+_s}=i_{q_{\ell^+_s}}}^m 
\sum_{s_1=1}^{j_{q_1}} \ldots \sum_{s_{\ell^+_s}=1}^{j_{q_{\ell^+_s}}}\\
& D^+(1, \ldots, i_{q_1}-1, r_1)\ldots D^+(1, \ldots, i_{q_{\ell^+_s}-1}, r_{\ell^+_s})\times (-1)^{s_1+\ldots+ s_{\ell^+_s}+j_{q_1}+\ldots j_{q_{\ell^+_s}}}\times \\
&D^-(m+1, \ldots, \widehat{m+s_1}, \ldots, m+j_{q_1})\ldots  D^-(m+1, \ldots, \widehat{m+s_{\ell^+_s}}, \ldots, m+j_{q_{\ell^+_s}})\times \\
&E^+_s(y_{r_1,m+s_1}\otimes \ldots \otimes y_{r_{\ell^+_s},m+s_{\ell^+_s}}),
\end{aligned}\]

and rewrite
\[\begin{aligned}&\rho^+_s= \sum_{s_1=1}^{j_{q_1}} \ldots \sum_{s_{\ell^+_s}=1}^{j_{q_{\ell^+_s}}}
(-1)^{s_1+\ldots+ s_{\ell^+_s}+j_{q_1}+\ldots j_{q_{\ell^+_j}}}\times \\
&D^-(m+1, \ldots, \widehat{m+s_1}, \ldots, m+j_{q_1})\ldots  D^-(m+1, \ldots, \widehat{m+s_{\ell^+_s}}, \ldots, m+j_{q_{\ell^+_s}})\\
&\big[\sum_{r_1=i_{q_1}}^m \ldots \sum_{r_{\ell^+_s}=i_{q_{\ell^+_s}}}^m 
DI^+(r_1, \ldots r_{\ell^+_s})E^+_s(y_{r_1,m+s_1}\otimes \ldots \otimes y_{r_{\ell^+_s},m+s_{\ell^+_s}})\Big].
\end{aligned}\]

Therefore

\[\begin{aligned}&\sigma^+_s.\rho^+_s= \sum_{s_1=1}^{j_{q_1}} \ldots \sum_{s_{\ell^+_s}=1}^{j_{q_{\ell^+_s}}}
(-1)^{s_1+\ldots+ s_{\ell^+_s}+j_{q_1}+\ldots j_{q_{\ell^+_j}}}\times \\
&D^-(m+1, \ldots, \widehat{m+s_1}, \ldots, m+j_{q_1})\ldots  D^-(m+1, \ldots, \widehat{m+s_{\ell^+_s}}, \ldots, m+j_{q_{\ell^+_s}})\\
&\big[\sum_{r_1=i_{q_1}}^m \ldots \sum_{r_{\ell^+_s}=i_{q_{\ell^+_s}}}^m 
DI^+(r_1, \ldots r_{\ell^+_s})\sigma^+_s.E^+_s(y_{r_1,m+s_1}\otimes \ldots \otimes y_{r_{\ell^+_s},m+s_{\ell^+_s}})\Big].
\end{aligned}\]

Fix indices $s_1, \ldots, s_{\ell^+_s}$, and $R=(R_1, \ldots, R_{\ell^+_s})$ such that $i_{q_t}=p_s+t-1\leq R_t$ for each $1\leq t \leq \ell^+_s$ and consider 
$(r_1, \ldots, r_{\ell^+_s})\in \mathcal{O}(R_1, \ldots, R_{\ell^+_s})$.

The sum 
\[\sum_{(r_1, \ldots, r_{\ell^+_s})\in \mathcal{O}(R_1, \ldots, R_{\ell^+_s})}DI^+(r_1,\ldots, r_{\ell^+_s})
\sigma^+_s.E^+_s(y_{r_1,m+s_1}\otimes \ldots \otimes y_{r_{\ell^+_s},m+s_{\ell^+_s}})\] can be
rearranged as 
\[\begin{aligned}&\Big[\sum_{\sigma_s\in X^+_s} (-1)^{\sigma_s}
\sigma_s.DI^+(R_1,\ldots, R_{\ell^+_s})\Big]\times\\
&\Big[\sum_{\sigma_s\in X^+_s} (-1)^{\sigma_s}
\sigma_s.E^+_i(y_{R_1,m+s_1}\otimes \ldots \otimes y_{R_{\ell^+_s},m+s_{\ell^+_s}})\Big]\\
&=[\sigma^+_s. DI^+(R_1,\ldots, R_{\ell^+_s})][\sigma^+_s. E^+_i(y_{R_1,m+s_1}\otimes \ldots \otimes y_{R_{\ell^+_s},m+s_{\ell^+_s}})]
\end{aligned}\]
because \[\sigma^+_s.E^+_i(y_{r_1,m+s_1}\otimes \ldots \otimes y_{r_{\ell^+_s},m+s_{\ell^+_s}})=
\sigma^+_s.E^+_i(y_{R_1,m+s_1}\otimes \ldots \otimes y_{R_{\ell^+_s},m+s_{\ell^+_s}})\]
for each $(r_1, \ldots, r_{\ell^+_s})\in \mathcal{O}(R_1, \ldots, R_{\ell^+_s})$.
Since 
\[DI^+(R_1,\ldots, R_{\ell^+_s})= \prod_{t=1}^{\ell^+_s} D^+(1,\ldots, p_s+t-2, R_t),\]
we get  
\[\sigma_s.DI^+(R_1,\ldots, R_{\ell^+_s})=\prod_{t=1}^{\ell^+_s} D^+(1,\ldots, p_s+t-2, \sigma(R_t))\]
and 
\[\sigma^+_s.DI^+(R_1,\ldots, R_{\ell^+_s})=\sum_{\sigma_s\in X^+_s} \prod_{t=1}^{\ell^+_s} D^+(1,\ldots, p_s+t-2, \sigma(R_t))\]
which by Proposition \ref{pr1} equals
\[D^+(1, \ldots, p_s-1,R_1, \ldots, R_{\ell^+_s})\prod_{t=1}^{\ell^+_s-1} D^+(1, \ldots, p_s+t-1).\]

Note that the last expression vanishes if entries of $R$ are not pairwise different. 

This implies that the term 
\[\sum_{r_1=i_{q_1}}^m \ldots \sum_{r_{\ell^+_s}=i_{q_{\ell^+_s}}}^m 
DI^+(r_1, \ldots r_{\ell^+_s})\sigma^+_s.E^+_s(y_{r_1,m+s_1}\otimes \ldots \otimes y_{r_{\ell^+_s},m+s_{\ell^+_s}}),\]
appearing in the expression for $\sigma^+_s \rho^+_s$,
equals the sum of the expressions
\[\begin{aligned}&\sigma^+_s.DI^+(R_1, \ldots R_{\ell^+_s}) \sigma^+_s.E^+_s(y_{R_1,m+s_1}\otimes \ldots \otimes y_{R_{\ell^+_s},m+s_{\ell^+_s}})\\
&=D^+(1, \ldots, p_s-1,R_1, \ldots, R_{\ell^+_s})\prod_{t=1}^{\ell^+_s-1} D^+(1, \ldots, p_s+t-1) \times \\
&\sigma^+_s.E^+_s(y_{R_1,m+s_1}\otimes \ldots \otimes y_{R_{\ell^+_s},m+s_{\ell^+_s}})
\end{aligned}\]
over all ordered $R=(R_1, \ldots R_{\ell^+_s})$ such that   $i_{q_t}=p_s+t-1\leq R_t$ for each $1\leq t \leq \ell^+_s$.

Combining contributions from all $\sigma^+_s$ we derive that 
$\sigma^+\rho_{I|J}$ is a multiple of 
$\prod_{i=1}^{m-1} D^+(1, \ldots, i)^{\la_{i+1}-\mu_i}$.

Analogously, we write $\sigma^-_r\rho^-_r$ as
\[\begin{aligned}&\sigma^-_r.\rho^-_r= \sum_{r_1=i_{q_1}}^m \ldots \sum_{r_{\ell^-_r}=i_{q_{\ell^-_r}}}^m 
D^+(1, \ldots, i_{q_1}-1, r_1)\ldots D^+(1, \ldots, i_{q_{\ell^-_r}-1}, r_{\ell^-_r})\\
&\big[\sum_{s_1=1}^{j_{q_1}} \ldots \sum_{s_{\ell^-_r}=1}^{j_{q_{\ell^-_r}}}
DJ^-(s_1, \ldots s_{\ell^-_r})\sigma^-_r.E^-_r(y_{r_1,m+s_1}\otimes \ldots \otimes y_{r_{\ell^-_r},m+s_{\ell^-_r}})\Big], 
\end{aligned}.\]
We can use Proposition \ref{pr2} to rewrite the expression
\[\sum_{s_1=1}^{j_{q_1}} \ldots \sum_{s_{\ell^-_r}=1}^{j_{q_{\ell^-_r}}}
DJ^-(s_1, \ldots s_{\ell^-_r})\sigma^-_r.E^-_r(y_{r_1,m+s_1}\otimes \ldots \otimes y_{r_{\ell^-_r},m+s_{\ell^-_r}})\]
as the sum of the expressions
\[\begin{aligned}&\sigma^-_r.DJ^-(S_1, \ldots S_{\ell^-_r}) \sigma^-_r.E^-_r(y_{r_1,m+S_1}\otimes \ldots \otimes y_{r_{\ell^-_r},m+S_{\ell^-_r}})\\
&=D^-(m+1, \ldots, m+p_s-1,m+S_1, \ldots, m+S_{\ell^-_r})\times \\
&\prod_{t=1}^{\ell^-_r-1} D^-(m+1, \ldots, m+p_s+t-1) \times \sigma^-_r.E^-_r(y_{r_1,m+S_1}\otimes \ldots \otimes y_{r_{\ell^-_r},m+S_{\ell^-_r}})
\end{aligned}\]
over all ordered $S=(S_1, \ldots S_{\ell^-_r})$ such that $S_t\leq j_{q_t}=p_r+t-1$ for each $1\leq t \leq \ell^-_r$.

Combining contributions from all $\sigma^-_r$ we derive that 
$\sigma^-\rho_{I|J}$ is a multiple of 
$\prod_{j=1}^{n-1} D^-(m+1, \ldots, m+j)^{\nu_{j+1}-\la^-_j}$.

Finally, we can combine actions $\sigma^+$ and $\sigma^-$ into $\sigma$. Since the maps $E^+_s$ and $E^-_r$ commute (because they act on $I$- and $J$- components of $\rho_{I|J}$, respectively), and determinantal identities involving $DI^+(r_1, \ldots, r_{\ell^+_s})$ and $DJ^-(s_1, \ldots, s_{\ell^-_r})$ are independent of each other (they involve determinants
of type $D^+(i_1, \ldots, i_t)$ and $D^-(j_1, \ldots, j_t)$, respectively),  
we can combine arguments used for $\sigma^+$ and $\sigma^-$ to conlude that 
$\sigma\rho_{I|J}$ is a multiple of 
\[\prod_{i=1}^{m-1} D^+(1, \ldots, i)^{\la_{i+1}-\mu_i}\prod_{j=1}^{n-1} D^-(m+1,\ldots, m+j)^{\nu_{j+1}-\la^-_{j}}\]
which cancels off all the terms in the denominator of $v_{I|J}$.
During this process, we group together those expressions that correspond to simultaneous row permutations of a tableau of the shape $\la'/ \mu'$ with entries strictly increasing in its rows from left to right (corresponding to $(R_1, \ldots R_{\ell^+_s})$ above) and column permutations of a tableau of the shape $\nu/ \omega$ with entries strictly increasing in its columns from top to bottom (corresponding to $(S_1, \ldots S_{\ell^-_r})$ above).

Or, alternatively, we can observe that the action of $\sigma^+$ and $\sigma^-$ commute since they act on the $I$- and $J$- component respectively. 
Since $\sigma^+(\sigma^-\rho_{I|J})$
is a linear combination of expressions of type $\sigma^+\rho_{I|L}$, we derive that $\sigma\rho_{I|J}$ is a multiple of $\prod_{i=1}^{m-1} D^+(1, \ldots, i)^{\la_{i+1}-\mu_i}$.
Since $\sigma^-(\sigma^+\rho_{I|J})$ is a linear combination of expressions of type $\sigma^-\rho_{K|J}$, we derive that $\sigma\rho_{I|J}$ is a multiple of 
$D^-(m+1,\ldots, m+j)^{\nu_{j+1}-\la^-_{j}}$. Since the variables in $D^+(1, \ldots, i)$ and $D^-(m+1,\ldots, m+j)$ are distinct, we conclude that 
$\sigma\rho_{I|J}$ is a multiple of  \[\prod_{i=1}^{m-1} D^+(1, \ldots, i)^{\la_{i+1}-\mu_i}\prod_{j=1}^{n-1} D^-(m+1,\ldots, m+j)^{\nu_{j+1}-\la^-_{j}}.\]

Moreover, the above formulas also show that $\sigma.\pi_{I|J}=v_{I|J}\sigma\rho_{I|J}$ is a linear combination of tensor products of bideterminants of the shape $\la$ and elements 
from $\otimes^k Y$, which means that it belongs to $H^0_{G_{ev}}(\la)\otimes \otimes^k Y$.
\end{proof}

The above theorem provides $G_{ev}$-primitive vectors of all possible weights in $H^0_{G_{ev}}(\la)\otimes \otimes^k Y$. It seems plausible that these vectors span
all $G_{ev}$-primitive vectors in $H^0_{G_{ev}}(\la)\otimes \otimes^k Y$, but we do not need this result and will not pursue it further in this paper.

\begin{cor}\label{korak}
The image $v_{I|J}\sigma.\overline{\rho}_{I|J}$ of $v_{I|J}\sigma.\rho_{I|J}$ in $H^0_{G_{ev}}(\la)\otimes \wedge^k Y$ is a $G_{ev}$-primitive vector in $H^0_G(\la)$.
\end{cor}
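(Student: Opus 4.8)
The plan is to deduce the statement from Theorem~\ref{p1} by transporting both membership and even-primitivity through the natural projection $p\colon \otimes^k Y\to \wedge^k Y$. The two things to check are that $v_{I|J}\sigma.\overline{\rho}_{I|J}$ genuinely lies in $H^0_G(\la)$ (concretely, in the floor $F_k=H^0_{G_{ev}}(\la)\otimes \wedge^k Y$) and that it is annihilated by the even negative superderivations. The single structural fact underlying both points is that $p$ is a homomorphism of $G_{ev}$-supermodules: indeed $\wedge^k Y$ is the quotient of $\otimes^k Y$ by the $\Sigma_k$-stable (hence $G_{ev}$-stable, since the symmetric group action commutes with the $G_{ev}$-action) subspace of non-antisymmetric tensors, and under the realization inside $K[G]$ this projection is exactly the multiplication of the odd generators $y_{kl}$, which pairwise anticommute.

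First I would settle membership. By Theorem~\ref{p1} the vector $v_{I|J}\sigma.\rho_{I|J}$ lies in $H^0_{G_{ev}}(\la)\otimes \otimes^k Y$; applying $\mathrm{id}\otimes p$ sends it into $H^0_{G_{ev}}(\la)\otimes \wedge^k Y=F_k$. It remains to see that $F_k$ sits inside $H^0_G(\la)$ as a $G_{ev}$-subsupermodule. This follows from the isomorphism $\tilde{\phi}\colon H^0_{G_{ev}}(\la)\otimes \Lambda(Y)\to H^0_G(\la)$ of $G_{ev}$-supermodules together with the decomposition $\Lambda(Y)=\bigoplus_k \wedge^k Y$: because $G_{ev}$ acts by degree-preserving automorphisms of $\Lambda(Y)$, this exterior grading is respected by the even action, so $H^0_G(\la)=\bigoplus_k F_k$ as $G_{ev}$-supermodules and each floor $F_k$ is a $G_{ev}$-subsupermodule. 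Hence $v_{I|J}\sigma.\overline{\rho}_{I|J}\in F_k\subseteq H^0_G(\la)$.

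Next I would transport primitivity. Even-primitivity is the condition of being annihilated by the superderivations ${}_{ij}D$ with $i>j$ lying in one of the two even blocks, i.e.\ by the negative root vectors of $G_{ev}$; a $G_{ev}$-supermodule homomorphism intertwines this action, so it carries even-primitive vectors to even-primitive vectors (or to zero). Since $\mathrm{id}\otimes p$ is such a homomorphism and $v_{I|J}\sigma.\rho_{I|J}$ is even-primitive by Theorem~\ref{p1}, its image $v_{I|J}\sigma.\overline{\rho}_{I|J}$ is even-primitive in $F_k$, and therefore in $H^0_G(\la)$. If the image happens to vanish there is nothing to prove, so no nonvanishing claim is needed here.

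The point requiring the most care is the very claim that $\mathrm{id}\otimes p$ is $G_{ev}$-equivariant as a map of \emph{super}modules, with all Koszul signs accounted for: one must check that antisymmetrizing the odd elements $y_{kl}$ is compatible with the Leibniz action of the even superderivations, and that this antisymmetrization matches honest multiplication in $K[G]$ under $\tilde{\phi}$. This is where the super-structure could conceivably hide a sign obstruction, but it is exactly the compatibility encoded by Lemma~5.1 of \cite{z} (the assertion that $\tilde{\phi}$ is an isomorphism of $G_{ev}$-supermodules) together with the fact that $Y$ is a purely odd $G_{ev}$-supermodule. Granting this, the corollary is immediate from Theorem~\ref{p1}.
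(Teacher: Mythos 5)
Your proposal is correct and follows exactly the route the paper intends: the corollary is stated without proof precisely because it is the image of the Theorem~\ref{p1} vector under the natural $G_{ev}$-equivariant projection $H^0_{G_{ev}}(\la)\otimes\otimes^k Y\to H^0_{G_{ev}}(\la)\otimes\wedge^k Y=F_k\subseteq H^0_G(\la)$, which preserves both membership and even-primitivity. Your extra care about the floor decomposition and the Koszul signs is sound but not a departure from the paper's (implicit) argument.
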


For the understanding of the $G_{ev}$-structure of $H^0_G(\la)=H^0_{G_{ev}}\otimes \wedge(Y)$, we need to find a basis of $G_{ev}$-primitive vectors in $H^0_G(\la)$. 
Since, in the simplest case when $H^0_G(\la)$ is irreducible, the dimensions of these vectors of a given weight are given by a number of certain Littlewood-Richardson tableaux, it is clear that finding such a basis is not a trivial problem. In the second half of this paper, we will search for an explicit basis of $G_{ev}$-primitive vectors of a given weight in $H^0_G(\la)$ and $\nabla(\la)$, consisting of the vectors of the above type $v_{I|J}\sigma.\overline{\rho}_{I|J}$. 

\section{Operators on tableaux and even-primitive vectors in $H^0_G(\la)$}

While Theorem \ref{p1} gives even-primitive vectors in $H^0_{G_{ev}}(\la)\otimes \otimes^k Y$, Corollary \ref{korak} gives even-primitive vectors in $H^0_G(\la)$.
From now on, we will concentrate our attention to a more detailed description of the even-primitive vectors in $H^0_G(\la) = H^0_{G_{ev}}(\la)\otimes \wedge(Y)$.
We will follow the general setup from Subsection \ref{gensetup}.

The starting point of our construction of the operator $\sigma^+$ was the congruence 
\[\rho_{i,j_1}\otimes\rho_{i+1,j_2}-\rho_{i+1,j_1}\otimes\rho_{i,j_2} \equiv 0 \pmod{D^+(1, \ldots, i)}\]
that lead to the definition of a positioning map $P^+$ and the operator $\sigma^+$, which is permuting entries in the $I$-component of $\rho_{I|J}$ while keeping the $J$-component unchanged.  

Since 
\[\begin{aligned}&\rho_{i,j_1}\wedge\rho_{i+1,j_2}-\rho_{i+1,j_1}\wedge\rho_{i,j_2}= \rho_{i,j_1}\wedge\rho_{i+1,j_2}+\rho_{i,j_2}\wedge\rho_{i+1,j_1}\\
&\equiv 0 \!\!\!\!\!\pmod{D^+(1, \ldots, i)}\end{aligned}\]
by Lemma \ref{l3}, when working over the exterior algebra, we can adjust the definition of the operator $\sigma^+$ (new operator will be denoted by $\tau^+$) in such a way that the $I$-component of $\rho_{I|J}$ is unchanged and the entries in the $J$-component are permuted. Also, in this case we have to remove the negative sign in the corresponding sum.

Analogously, the starting point of our construction of the operator $\sigma^-$ was the congruence 
\[\rho_{i_1,j}\otimes \rho_{i_2,j+1}-\rho_{i_1,j+1}\otimes \rho_{i_2,j} \equiv 0 \pmod{D^-(m+1, \ldots, m+j)}.\]
that lead to the definition of a positioning map $P^-$ and the operator $\sigma^-$, which is permuting entries in the $J$-component of $\rho_{I|J}$ while keeping the $I$-component unchanged.  

Since 
\[\begin{aligned}&\rho_{i_1,j}\wedge\rho_{i_2,j+1}-\rho_{i_1,j+1}\wedge\rho_{i_2,j}= \rho_{i_1,j}\wedge\rho_{i_2,j+1}+\rho_{i_2,j}\wedge\rho_{i_1,j+1} \\
&\equiv 0 \!\!\!\!\!\pmod{D^-(m+1, \ldots, m+j)}\end{aligned}\]
by Lemma \ref{l4}, when working over the exterior algebra, we can adjust the definition of the induced operator $\sigma^-$ (new operator will be denoted by $\tau^-$) in such a way that the $J$-component of $\rho_{I|J}$ is unchanged and the entries in the $I$-component are permuted. Also, in this case we have to remove the negative sign in the corresponding sum.

\subsection{Operators $\tau^+$ and $\tau^-$}

In Subsection \ref{tensorsetup}, using a positioning map $P^+$, to a multiindex $(K|L)$ we have assigned a tableaux $T^+$ of shape $(\la^+/ \mu)'$ and content $(0|\nu/ \omega)$ corresponding to $(K|L)$.

Conversely, to a tableau $T^+$ of shape $(\la^+/ \mu)'$ and content $(0|\nu/ \omega)$ we assign the multiindex $(I|J)$, in the following way.
The entries in $I$ are symbols $1\leq i\leq m$, they are weakly increasing, and for each $i$ 
there are exactly $\la^+_i-\mu_i$ entries in $I$ that are equal to $i$. 
The entries in $J$ are symbols $1\leq j\leq n$ and they are obtained by subtracting $m$ from 
entries in $T^+$ listed by columns from left to right, in each column ordered from top to bottom. 
Denote by $Q^+$ the map $T^+ \mapsto (I|J)$ defined this way.
In the particular case when the entries in rows of $T^+$ are strictly increasing from left to right and $\la_{I|J}$ is dominant, we obtain that $(I|J)$ is left admissible.
 
If $Q^+(T^+)=(I|J)$, then the entries in $I$ are weakly increasing, hence we will not obtain all possible multiindexes $(K|L)$ as images under $Q^+$. 
However, since we are working inside
the exterior algebra $\wedge(Y)$ instead of the tensor algebra over $Y$, after reordering of the terms in $K$ we get $\overline{\rho}_{K|L}=\epsilon\overline{\rho}_{I|J}$, where 
$\epsilon = \pm 1$ and $(I|J)=Q^+(T^+)$ for some $T^+$.
Additionally, define the vector $\overline{\rho}(T^+)=\overline{\rho}_{I|J}$.

If $T'^+$ is obtained from $T^+$ by column permutations, then $\overline{\rho}(T'^+)$ $=\pm \overline{\rho}(T^+)$ $=\pm \overline{\rho}_{I|J}$. Using the
correspondence between $T^+$ and $(I|J)$, we can replace the operator $\sigma^+$ acting on multiindices $(I|J)$ by an operator $\tau^+$ acting on tableaux $T^+$.

For $\sigma\in X^+$ write $\sigma\overline{\rho}_{I|J}=\overline{\sigma\rho_{I|J}}$ and denote by 
$\sigma T^+$ the tableau obtained by applying permutation $\sigma$ to the entries of $T^+$. 
The tableau $\sigma T^+$ corresponds to a multiindex $(I|L)$, where $L$ has the same content as $J$. 
It is clear that 
$\overline{\rho}(\sigma T^+)=\sigma\overline{\rho}(T^+)$.

The operator $\tau^+$ acting on $T^+$ is defined as \[\tau^+ T^+=\sum_{\sigma\in X^+} \sigma T^+.\]

The map $\overline{\rho}$ can be extended to linear combinations of tableaux in a natural way. 
Then the operator $\overline{\rho}\tau^+$ applied to $T^+$ is given as 
\[\overline{\rho}(\tau^+ T^+)=\sum_{\sigma\in X^+} \overline{\rho}(\sigma T^+).\]

The expressions $\overline{\rho}\tau^+ T^+$ can be considered as row bipermanents corresponding to the pair of tableaux $T^+_{can}$ and $T^+$ based on formal symbols
 $\rho_{ij}$, see \cite{grs}.

The operators $\tau^+$ and $\sigma^+$ are compatible in the sense that 
\begin{equation}\label{eq1}
\overline{\rho}(\tau^+ T^+)=\sigma^+\overline{\rho}(T^+).
\end{equation}
Therefore, $v_{I|J}\overline{\rho}(\tau^+ T^+)$ is an even-primitive vector that is a linear combination of vectors 
$\overline{\pi}_{I|L}=v_{I|J}\overline{\rho}_{I|L}$ with $L$ as above.

In the similar vein, in Subsection \ref{tensorsetup}, using positioning map $P^-$, to a multiindex $(K|L)$ we have assigned a tableaux $T^-$ of shape $\nu/ \omega$ and content $(\la^+/ \mu|0)$ corresponding to $(K|L)$.

Conversely, to a tableau $T^-$ of shape $\nu/ \omega$ and content $(\la^+/ \mu|0)$ we assign the multiindex $(I|J)$, in the following way.
The entries in $J$ are symbols $1\leq j\leq n$, they are weakly increasing, and for each $j$ 
there are exactly $\nu_j-\omega_j$ entries in $J$ that are equal to $j$. 
The entries in $I$ are symbols $1\leq i\leq m$ that are 
entries in $T^-$ listed by rows from top to bottom, in each row ordered from left to right. 
Denote by $Q^-$ the map $T^- \mapsto (I|J)$ defined this way.
In the particular case when the entries in rows of $T^-$ are strictly increasing from left to right and $\la_{I|J}$ is dominant, we obtain that $(I|J)$ is right admissible.

If $Q^-(T^-)=(I|J)$, then the entries in $J$ are weakly increasing, hence we will not obtain all possible multiindexes $(K|L)$ as images under $Q^-$. However, since we are working inside
the exterior algebra $\wedge(Y)$ instead of the tensor algebra over $Y$, after reordering of the terms in $L$ we get $\overline{\rho}_{K|L}=\epsilon\overline{\rho}_{I|J}$, where 
$\epsilon = \pm 1$ and $(I|J)=Q^-(T^-)$ for some $T^-$.
Additionally, define the vector $\overline{\rho}(T^-)=\overline{\rho}_{I|J}$.

If $T'^-$ is obtained from $T^-$ by row permutations, then $\overline{\rho}(T'^-)=\pm \overline{\rho}(T^-)=\pm \overline{\rho}_{I|J}$. Using the
correspondence between $T^-$ and $(I|J)$, we can replace the operator $\sigma^-$ acting on multiindices $(I|J)$ by an operator $\tau^-$ acting on tableaux $T^-$.

For $\sigma\in X^-$ write $\sigma\overline{\rho}_{I|J}=\overline{\sigma\rho_{I|J}}$ and denote by 
$\sigma T^-$ the tableau obtained by applying permutation $\sigma$ to the entries of $T^-$. 
The tableau $\sigma T^-$ corresponds to a multiindex $(K|J)$, where $K$ has the same content as $I$. 
It is clear that 
$\overline{\rho}(\sigma T^-)=\sigma\overline{\rho}(T^-)$.

The operator $\tau^-$ acting on $T^-$ is defined as \[\tau^- T^-=\sum_{\sigma\in X^-} \sigma T^-\]
and the operator $\overline{\rho}\tau^-$ applied to $T^-$ is given as 
\[\overline{\rho}(\tau^- T^-)=\sum_{\sigma\in X^-} \overline{\rho}(\sigma T^-).\]

The expressions $\overline{\rho}\tau^+ T^-$ can be considered as row bipermanents corresponding to the pair of tableaux $T^-_{can}$ and $T^-$ based on formal symbols
 $\rho_{ij}$, see \cite{grs}.

The operators $\tau^-$ and $\sigma^-$ are compatible in the sense that 
\begin{equation}\label{eq2}
\overline{\rho}(\tau^- T^-)=\sigma^-\overline{\rho}(T^-).
\end{equation}
Therefore, $v_{I|J}\overline{\rho}(\tau^- T^-)$ is an even-primitive vector that is a linear combination of vectors 
$\overline{\pi}_{K|J}=v_{I|J}\overline{\rho}_{K|J}$ with $K$ as above.

We illustrate the above definitions on the following example.

\begin{ex}\label{ex1}
Let $m=2$, $n=6$, $\la=(3,3)$ and $\mu=\emptyset$. Then the diagram $[\la'/\mu']$ is of type $(2,2,2)$. Consider the tableau 
$T^+=\begin{matrix}3&4\\5&6\\7&8\end{matrix}$. Then 
\[\tau^+ T^+=\begin{matrix}3&4\\5&6\\7&8\end{matrix}+\begin{matrix}4&3\\5&6\\7&8\end{matrix}+\begin{matrix}3&4\\6&5\\7&8\end{matrix}+
\begin{matrix}3&4\\5&6\\8&7\end{matrix}+\begin{matrix}4&3\\6&5\\7&8\end{matrix}+\begin{matrix}4&3\\5&6\\8&7\end{matrix}+\begin{matrix}3&4\\6&5\\8&7\end{matrix}
+\begin{matrix}4&3\\6&5\\8&7\end{matrix}\]
and 
\[\begin{aligned}\overline{\rho}(\tau^+ T^+)=&\quad\,\rho_{13}\wedge\rho_{15}\wedge\rho_{17}\wedge\rho_{24}\wedge\rho_{26}\wedge\rho_{28}\\
&+\rho_{14}\wedge\rho_{15}\wedge\rho_{17}\wedge\rho_{23}\wedge\rho_{26}\wedge\rho_{28}\\
&+\rho_{13}\wedge\rho_{16}\wedge\rho_{17}\wedge\rho_{24}\wedge\rho_{25}\wedge\rho_{28}\\
&+\rho_{13}\wedge\rho_{15}\wedge\rho_{18}\wedge\rho_{24}\wedge\rho_{26}\wedge\rho_{27}\\
&+\rho_{14}\wedge\rho_{16}\wedge\rho_{17}\wedge\rho_{23}\wedge\rho_{25}\wedge\rho_{28}\\
&+\rho_{14}\wedge\rho_{15}\wedge\rho_{18}\wedge\rho_{23}\wedge\rho_{26}\wedge\rho_{27}\\
&+\rho_{13}\wedge\rho_{16}\wedge\rho_{18}\wedge\rho_{24}\wedge\rho_{25}\wedge\rho_{27}\\
&+\rho_{14}\wedge\rho_{16}\wedge\rho_{18}\wedge\rho_{23}\wedge\rho_{25}\wedge\rho_{27}.
\end{aligned}\]
\end{ex}

\subsection{The operator $\tau$}

Let $(I|J)$, $(K|L)$ and $(M|N)$ be multiindices such that $(K|L)$ is left admissible, $(M|N)$ is right admissible and 
$\overline{\rho}_{I|J}=\epsilon_1 \overline{\rho}_{K|L}=\epsilon_2 \overline{\rho}_{M|N}$, where $\epsilon_1, \epsilon_2 \in \{\pm 1\}$.

There is a unique tableau $R^+$ such that $Q^+(R^+)=(K|L)$ and a unique positioning map $P^+$, such that $P^+(K|L)=R^+$. 
Other tableaux $T^+$ corresponding to $(K|L)$, with respect to different positioning maps $P^+$, differ from $R^+$ only by permutations of entries in its rows.
Then $\overline{\rho}(T^+)=\pm \overline{\rho}(R^+)=\pm\overline{\rho}_{K|L}$.

There is a unique tableau $R^-$ such that $Q^-(R^-)=(M|N)$ and a unique positioning map $P^-$, such that $P^-(M|N)=R^-$. 
Other tableaux $T^-$ corresponding to $(M|N)$, with respect to different positioning maps $P^-$, differ from $R^-$ only by permutations of entries in its columns.
Then $\overline{\rho}(T^-)=\pm \overline{\rho}(R^-)=\pm\overline{\rho}_{M|N}$.

Therefore, any tableaux $T^+$ and $T^-$ constructed as above are closely related to $\overline{\rho}_{I|J}$. 
We will define a repositioning map $Rpos$ that, together with its inverse $Rpos^{-1}$, will fix a correspondence between $T^+$ and $T^-$ and allow us to move from one representation to the other one and back while preserving the correspondence to the expression $\overline{\rho}_{I|J}$.

Fix a tableau $T^+:\mathcal{D}^+ \to \{m+1, \ldots, m+n\}$. We want to assign to $T^+$ a tableau $T^-:\mathcal{D}^-\to \{1, \ldots, m\}$ 
that has the property $\overline{\rho}(T^-) =\pm \overline{\rho}(T^+)$.
This property is satisfied if and only if for each $r$ the $r$-th row of $T^-$ contains exactly those indexes $i$ such that the $i$-th column of $T^+$ contains an entry equal to $m+r$. 

Let a repositioning map $Rpos: \mathcal{D}^+ \to \mathcal{D}^-$ be a bijection that maps each entry $[k,i]\in \mathcal{D}^+$ such that $t^+_{k,i}=m+j$ 
to some $[j,l]\in\mathcal{D}^-$. Then the tableau $T^-=Rp(T^+)$, corresponding to the map $Rpos$, is given by $t^-_{j,l}=i$, where $t^+_{k,i}=m+j$ as above. 
Tableaux $T^-$ like these are in one-to-one correspondence with the maps $Rpos$ satisfying the above property.
Then the basic compatibility requirement 
\[\overline{\rho}(T^+)=\epsilon \overline{\rho}(T^-)=\epsilon \overline{\rho}(Rp(T^+)),\]
where $\epsilon=\pm 1$, is satisfied.

Our next goal is to define an analogue of the operator $\sigma$, defined earlier, acting on $(I|J)$ and $\rho_{I|J}$. We will define an operator $\tau$ acting on a tableau
$T^+$ that will combine the actions of operators $\tau^-$ on $T^+$ and $\tau^-$ on $T^-$. 

Fix a tableau $T^+$, a repositioning map $Rpos$ and $T^-=Rp(T^+)$. If $\sigma\in X^-$, then the tableau $\sigma T^-$ is given 
as $T^-\circ\sigma$, a composition of $\sigma$ and $T^-: \mathcal{D}^- \to \{1, \ldots, m\}$. 
The map $Rp(\sigma):\mathcal{D}^+\to \mathcal{D}^+$ is defined to make the following diagram commutative.
\[\begin{CD} \mathcal{D}^+ @>Rp(\sigma)>> \mathcal{D}^+\\
@VVRposV @VVRposV\\
\mathcal{D}^- @>\sigma>> \mathcal{D}^-
\end{CD}\]
We define $\sigma T^+=T^+\circ Rp(\sigma)$. Then there is the following compatibility condition  
\[\overline{\rho}(\sigma T^+)=\epsilon \overline{\rho}(\sigma T^-).\]
Extending this naturally to linear combinations, we define $\tau^- T^+$ and obtain the following compatibility condition
\begin{equation}\label{eq3}
\overline{\rho}(\tau^- T^+)=\epsilon \overline{\rho}(\tau^- T^-).
\end{equation}

Finally, we define $\tau.T^+= \tau^+\tau^- T^+$.

The following theorem is related to Theorem \ref{p1} and Corollary \ref{korak}.

\begin{teo}\label{p2}
For any tableau $T^+$ of shape $(\la^+/ \mu)'$ and content $(0|\nu /\omega)$, and a repositioning map $Rpos:\mathcal{D}^+\to \mathcal{D}^-$ as above,  
the expression 
$v_{I|J}\overline{\rho}(\tau T^+)$ is a $G_{ev}$-primitive vector of $H^0_G(\la)$.
\end{teo}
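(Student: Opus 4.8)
The plan is to reduce the statement to Corollary \ref{korak} by translating the combinatorial operator $\tau = \tau^+\tau^-$ acting on tableaux into the operator $\sigma = \sigma^-\circ\sigma^+$ acting on the vectors $\overline{\rho}_{I|J}$, using the three compatibility relations \eqref{eq1}, \eqref{eq2} and \eqref{eq3} recorded when $\tau^+$, $\tau^-$ and $\tau$ were defined. Throughout, let $(I|J)=Q^+(T^+)$, so that $\overline{\rho}(T^+)=\overline{\rho}_{I|J}$ by definition; since $v_{I|J}$ depends only on $cont(I|J)$ and every multiindex occurring below shares this common content, the prefactor $v_{I|J}$ is unambiguous and may be pulled outside all computations.

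First I would expand $\overline{\rho}(\tau T^+)=\overline{\rho}(\tau^+\tau^- T^+)$ and apply the relations in order. Since $\tau^- T^+$ is, by construction, a $\pm$-signed linear combination of tableaux of shape $\mathcal{D}^+$, relation \eqref{eq1} extended linearly gives $\overline{\rho}(\tau^+\tau^- T^+)=\sigma^+\overline{\rho}(\tau^- T^+)$. Relation \eqref{eq3} then yields $\overline{\rho}(\tau^- T^+)=\epsilon\,\overline{\rho}(\tau^- T^-)$ for $T^-=Rp(T^+)$ and some fixed $\epsilon=\pm 1$, and relation \eqref{eq2} gives $\overline{\rho}(\tau^- T^-)=\sigma^-\overline{\rho}(T^-)$. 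Using the basic compatibility $\overline{\rho}(T^-)=\epsilon'\,\overline{\rho}(T^+)=\epsilon'\,\overline{\rho}_{I|J}$ of the repositioning map, these combine to $\overline{\rho}(\tau T^+)=\pm\,\sigma^+\sigma^-\,\overline{\rho}_{I|J}$.

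The key remaining observation, already noted in the proof of Theorem \ref{p1}, is that $\sigma^+$ and $\sigma^-$ commute because they permute the $I$- and the $J$-component of $\rho_{I|J}$ independently; hence $\sigma^+\sigma^-=\sigma^-\circ\sigma^+=\sigma$ and $\overline{\rho}(\tau T^+)=\pm\,\sigma.\overline{\rho}_{I|J}$. Multiplying by $v_{I|J}$ gives $v_{I|J}\,\overline{\rho}(\tau T^+)=\pm\,v_{I|J}\,\sigma.\overline{\rho}_{I|J}$, which is $\pm 1$ times the vector shown to be $G_{ev}$-primitive in $H^0_G(\la)$ by Corollary \ref{korak}. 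Since any nonzero scalar multiple of a $G_{ev}$-primitive vector is again $G_{ev}$-primitive, this completes the argument.

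I expect the only real subtlety to be the bookkeeping of the signs $\epsilon,\epsilon'=\pm 1$ arising from the repositioning map and from the passage between $\otimes^k Y$ and $\wedge^k Y$: one must check that each such sign is \emph{global}, i.e.\ independent of the individual permutations being summed, so that it factors cleanly out of the sums defining $\tau^+$ and $\tau^-$ and does not obstruct the linear extension of \eqref{eq1}. Granting the three compatibility relations and the commutativity of $\sigma^+$ and $\sigma^-$, no further representation-theoretic input is needed, since primitivity is inherited entirely from Corollary \ref{korak}.
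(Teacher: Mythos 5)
Your argument is correct and is essentially the paper's own proof: both convert $\tau=\tau^+\tau^-$ acting on tableaux into $\sigma^+\sigma^-$ acting on $\overline{\rho}_{I|J}$ via the compatibility relations (\ref{eq1}), (\ref{eq2}) and (\ref{eq3}), applied linearly to the summands of $\tau^- T^+$. The only difference is packaging: the paper re-derives the congruences modulo $\prod_{i}D^+(1,\ldots,i)^{\la_{i+1}-\mu_i}$ and $\prod_{j}D^-(m+1,\ldots,m+j)^{\nu_{j+1}-\la^-_j}$ directly from the proof of Theorem \ref{p1}, whereas you invoke Corollary \ref{korak}, which encapsulates the same divisibility and primitivity facts.
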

\begin{proof}

Denote $T^-= Rp(T^+)$ given by the map $Rpos$. It follows from the proof of Theorem \ref{p1} that 
\[\sigma^-\overline{\rho}(T^-) \equiv 0 \pmod{\prod_{j=1}^{n-1} D^-(m+1, \ldots, m+j)^{\nu_{j+1}-\la^-_j}}.\]
Equations (\ref{eq3}) and (\ref{eq2}) imply that
\[\epsilon \overline{\rho}(\tau^- T^+)= \overline{\rho}(\tau^- T^-) = \sigma^-\overline{\rho}(T^-) \equiv 0 \pmod{\prod_{j=1}^{n-1} D^-(m+1, \ldots, m+j)^{\nu_{j+1}-\la^-_j}}.\]

The expression $\tau^- T^+$ is a linear combination of tableaux $U^+$, and for each such $U^+$ the proof of Theorem \ref{p1} implies
\[\sigma^+ \overline{\rho}(U^+) \equiv 0 \pmod{\prod_{i=1}^{m-1} D^+(1, \ldots, i)^{\la_{i+1}-\mu_i}}.\]

The equation (\ref{eq1}) implies
\[\overline{\rho}(\tau^+ U^+)=\sigma^+ \overline{\rho}(U^+) \equiv 0 \pmod{\prod_{i=1}^{m-1} D^+(1, \ldots, i)^{\la_{i+1}-\mu_i}}.\]
Therefore
\[\overline{\rho}(\tau^+ \tau^- T^+) \equiv 0 \pmod{\prod_{i=1}^{m-1} D^+(1, \ldots, i)^{\la_{i+1}-\mu_i}\prod_{j=1}^{n-1} D^-(m+1, \ldots, m+j)^{\nu_{j+1}-\la^-_j}}\]
showing that $v_{I|J}\overline{\rho}(\tau T^+)$ belongs to $H^0_G(\la)$. It is obvious that 
$v_{I|J}\overline{\rho}(\tau T^+)$ is a $G_{ev}$-primitive vector.
\end{proof}

\subsection{Repositioning map}\label{synch}
For a given tableau $T^+$ there is a number of choices for the map $Rpos:\mathcal{D}^+ \to \mathcal{D}^-$ and the related tableau $T^-=Rp(T^+)$ for which the above theorem 
gives a $G_{ev}$-primitive vector $v_{I|J}\overline{\rho}(\tau T^+)$ of $H^0_G(\la)$.
We would like to fix for every $T^+$ a specific map $Rpos$ and tableau $T^-$ in a way that relates to Yamanouchi words and Littlewood-Richardson tableaux - see section 5.2 of \cite{fulton}.

Let $Q$ be a skew tableau of the shape $\alpha/ \beta$ and $Q^+_{can}$ be the canonical column skew tableau corresponding to the diagram $[\alpha/ \beta]$.
To  $Q$ we assign a word $w=w(Q)$, obtained by reading and concatenating entries in its rows from right to left starting in the top row and proceeding to the bottom row. 
The word $w$ is a {\it lattice word} if in every initial part of the word $w$, the symbol $i$ appears at least as many times as the symbol $i+1$.
The tableau $Q$ is called {\it Yamanouchi} if $w(Q)$ is a lattice word. 
Recall that $Q$ is called {\it semistandard} if all entries in each row are weakly increasing from left to right and all entries in each collumn are strictly increasing from top to bottom.
A {\it Littlewood-Richardson tableau} $Q$ is a tableau that is semistandard and Yamanouchi.

Also, define a word $z=z(Q)=w(Q^+_{can})$, which records instead of the entries in $Q$ their corresponding columns. 
We can think of $z(Q)$ as recording the places of the corresponding letters in $Q$. 
There is a connection of our setup to letter-place algebras defined in \cite{grs} but we will neither need it nor pursue it here.

Now we define the map $Rp$ that sends each tableau $T^+$ of the shape $(\la^+/ \mu)'$ and content $(0|\nu / \omega)$ to 
a tableau $Rp(T^+)=T^-$ of shape $\nu/ \omega$ and content $(\la^+ / \mu|0)$.
For a tableau $T^+$ as above, the word $w(T^+)$ codes letters appearing in $T^+$ which corresponds to the multiindex $J$.
The word $z(T^+)$ codes the corresponding places in $T^+$ and is related to the multiindex $I$.

\begin{df}
The tableau $T^-=Rp(T^+)$ is obtained in the following way. 
When reading the word $w(T^+)$, if the symbol $w_s=m+i$ appears for the $j$-th time, then $t^-_{i,\la^-_i+j}=z_s$, where $z=z(T^+)$.
The map $Rpos:\mathcal{D}^+\to \mathcal{D}^-$ is defined to correspond to this setup.
\end{df}

It is interesting that we can define the counting tableau $C^-$ corresponding to $T^+$ by putting $c^-_{i,\la^-_i+j}=s$ if the symbol $w_s=m+i$ appears for the $j$-th time in $w(T^+)$. While it might be useful for other purposes, we will not need this in what follows.

From this definition it is immediate that $Rpos$ is a bijection.

\begin{df}
Let $T^+$ be a tableau as above. Denote by $w'$ an initial part of the word $w(T^+)$ and for each $i$ denote by $a_{w'}(i)$ the number of appearances of the symbol 
$m+i$ in $w'$. The tableau $T^+$ is called shifted Yamanouchi if for every $w'$ and $i$ we have $\la^-_i+a_{w'}(i)\geq \la^-_{i+1}+a_{w'}(i+1)$.
\end{df}

The meaning of $T^+$ shifted Yamanouchi can be explained equivalently as follows. 
Let $T$ be the tableau corresponding to $T^+$. Define the new word $w_{sh}(T)$ obtained by concatenating of the word $w(L^-_{can})$ first and $w(T^+)$ second.
Then $T^+$ is shifted Yamanouchi if and only if the word $w_{sh}(T)$ is a lattice word. 

Since $w(L^-_{can})$ is a lattice word, the condition that $w_{sh}(T)$ is a lattice word can be expressed as follows.
If $[j,l],[j+1,l]\in \mathcal{D}^-$ correspond to the $l$-th appearance of the symbol $m+j$ and $m+j+1$ in $w_{sh}(T)$, respectively, then 
the $l$-th appearance of $m+j$ in $w_{sh}(T)$ is before the $l$-th appearance of $m+j+1$ in $w_{sh}(T)$.

\begin{ex}\label{ex2}
Let $m=2$, $n=3$, $\la=(\la^+|\la^-)=(2,2|1,1)$, $\mu=(1,0)$, $\nu=(2,2,1)$
and 
$T^+=\begin{matrix}
&4\\
3&5
\end{matrix}$. Then 
$w(T^+)=453$, $z(T^+)=221$ and 
$T^-=
\begin{matrix}
&1\\
&2\\
2
\end{matrix}$.
Also, 
$w_{sh}(T)=45453$ and $T^+$ is shifted Yamanouchi.

\end{ex}

\begin{ex}\label{ex3}
Let $m=n=3$, $\la=(5,4,4|3,2,1)$ and $\mu=(2,2,1)$ which implies $\la'=(6,5,4,3,1)$, $\mu'=(3,2)$. Let $\nu=(5,5,4)$ and $T^+$ be given as 
$\begin{matrix}&&\\&&4\\4&5&5\\5&6&6\\6\end{matrix}$.
Then $w(T^+)=45546656$
and the tableau $T^-$ is 
$\begin{matrix}&&&3&1\\&&3&2&1\\&3&2&1\end{matrix}$. 
Also, $w_{sh}(T)=44455645546656$ and $T^+$ is shifted Yamanouchi.
\end{ex}

\subsection{Further properties of the operator $\tau$}

Let $X^-_j$ be the subgroup of $X^-$ consisting of all elements that permute only the $j$-th column of the diagram $\mathcal{D}^-$, and 
$X^+_i$ be the subgroup of $X^+$ consisting of all elements that permute only the $i$-th row of the diagram $\mathcal{D}^+$.
Then $\tau^-$ decomposes as a product of commuting operators $\tau^-_j$, where $\tau^-_j = \sum_{\sigma\in X^-_j} \sigma$, 
and $\tau^+$ decomposes as a product of commuting operators $\tau^+_i$, where $\tau^+_i = \sum_{\sigma\in X^+_i} \sigma$. 

The following lemma shows a special case when $\overline{\rho}(\tau T^+)$ vanishes.

\begin{lm}\label{zero}
If there are two different entries in the same column of $T^-$ such that the corresponding entries in $T^+$ lie in the same row of $T$, 
then $\overline{\rho}(\tau T^+)=0$. 
\end{lm}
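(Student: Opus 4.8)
The plan is to translate the statement into the signed (wedge) picture and exhibit a single antisymmetrisation that annihilates the two offending factors. First I would use the compatibility identities to rewrite $\overline{\rho}(\tau T^+)=\overline{\rho}(\tau^+\tau^- T^+)$. Applying (\ref{eq1}) to each tableau occurring in $\tau^- T^+$ gives $\overline{\rho}(\tau T^+)=\sigma^+\,\overline{\rho}(\tau^- T^+)$, and then (\ref{eq3}) together with (\ref{eq2}) gives $\overline{\rho}(\tau^- T^+)=\epsilon\,\sigma^-\overline{\rho}(T^-)=\epsilon\epsilon'\,\sigma^-\overline{\rho}(T^+)$ for signs $\epsilon,\epsilon'\in\{\pm1\}$. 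Hence $\overline{\rho}(\tau T^+)=\epsilon\epsilon'\,\sigma^+\sigma^-\,\overline{\rho}(T^+)$, and it suffices to prove $\sigma^+\sigma^-\overline{\rho}(T^+)=0$. The point of this reduction is that $\sigma^+=\sum_{\sigma\in X^+}(-1)^\sigma\sigma$ acts only on the $I$-component of $\overline{\rho}_{I|J}$ (by $\rho_{I|J}\mapsto\rho_{K|J}$) while $\sigma^-=\sum_{\sigma\in X^-}(-1)^\sigma\sigma$ acts only on the $J$-component (by $\rho_{I|J}\mapsto\rho_{I|L}$); in particular the two operators commute.

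Next I would isolate the two cells. By hypothesis there are two cells $c_1,c_2$ lying in one column of $\mathcal{D}^-$ whose $Rpos$-preimages lie in one row of $\mathcal{D}^+$ (equivalently, in one row of $T$). Let $g\in X^+$ be the transposition of $c_1,c_2$ inside that row of $\mathcal{D}^+$ and let $g'\in X^-$ be the transposition of the corresponding cells inside that column of $\mathcal{D}^-$; both are legitimate group elements, since a swap of two boxes in a common row of $\mathcal{D}^+$ is a row permutation and a swap of two boxes in a common column of $\mathcal{D}^-$ is a column permutation. Writing the two factors of $\overline{\rho}(T^+)$ attached to $c_1,c_2$ as $P=\rho_{i_1|r_1}\wedge\rho_{i_2|r_2}$ (all remaining factors suppressed, as they are untouched), the element $g$ swaps the $I$-indices and $g'$ swaps the $J$-indices of this pair, so $gP=\rho_{i_2|r_1}\wedge\rho_{i_1|r_2}$ and $g'P=\rho_{i_1|r_2}\wedge\rho_{i_2|r_1}$.

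Now I would decompose the antisymmetrisers through the order-two subgroups $\langle g\rangle\le X^+$ and $\langle g'\rangle\le X^-$, writing $\sigma^+=\sigma^+_{\mathrm{rest}}(e-g)$ and $\sigma^-=\sigma^-_{\mathrm{rest}}(e-g')$ by choosing coset representatives; since $\sigma^+$ and $\sigma^-$ act on disjoint components, it is enough to show that $(e-g)(e-g')$ annihilates $P$. The key observation is that applying both transpositions interchanges the two wedge factors outright, $gg'P=\rho_{i_2|r_2}\wedge\rho_{i_1|r_1}=-P$, by antisymmetry of $\wedge$; the same antisymmetry gives $g'P=-gP$. Therefore $(e-g)(e-g')P=P-g'P-gP+gg'P=P-(-gP)-gP-P=0$, and consequently $\sigma^+\sigma^-\overline{\rho}(T^+)=0$, so $\overline{\rho}(\tau T^+)=0$.

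The main obstacle, and the place requiring the most care, is the first reduction: one must track the three signs coming from (\ref{eq1}), (\ref{eq2}) and (\ref{eq3}) and verify that $\sigma^+$ and $\sigma^-$ genuinely act on disjoint ($I$- and $J$-) components, so that the coset factorisation and the restriction to the single pair $P$ are legitimate. The conceptual subtlety is that $g$ and $g'$ are distinct operators — one permuting the $GL(m)$-indices, the other the $GL(n)$-indices — and neither alone yields a sign on $P$; it is only their product, which realises the bare transposition of the two wedge factors and hence equals $-1$, that forces the cancellation. Once this is in place, the computation in the previous paragraph is immediate.
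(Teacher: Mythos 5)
Your proposal is correct and follows essentially the same route as the paper: both arguments factor the two antisymmetrizers through the order-two subgroups generated by the row transposition in $\mathcal{D}^+$ and the column transposition in $\mathcal{D}^-$, and observe that their composite realizes the bare swap of the two wedge factors, so the product of the two partial sums annihilates the offending pair by antisymmetry of $\wedge$. The only difference is presentational — you work with the signed operators $\sigma^{\pm}$ acting on $\overline{\rho}$ via the reduction $\overline{\rho}(\tau T^+)=\pm\,\sigma^+\sigma^-\overline{\rho}(T^+)$, whereas the paper keeps the unsigned $\tau^{\pm}$ on tableaux and lets the sign emerge from the identity $\rho_{i_1,j_2}\wedge\rho_{i_2,j_1}=-\rho_{i_2,j_1}\wedge\rho_{i_1,j_2}$ when translating between the $T^-$- and $T^+$-readings.
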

\begin{proof}
Assume that the symbols $i_1$ and $i_2$ appear in positions $(j_1, l)$ and $(j_2, l)$ of $T^-$ and the corresponding entries $m+j_1$ and $m+j_2$ appear in 
positions $(k,i_1)$ and $(k,i_2)$ of $T^+$.

Denote by $\nu^-_l$ the transposition of positions $(j_1,l)$ and $(j_2,l)$ in $\mathcal{D}^-$ and by $\nu^+_k$ the transposition 
of positions $(k,i_1)$ and $(k,i_2)$ of $\mathcal{D}^+$.
Let $X^+_k=\widetilde{X}^+_k \nu^+_k$ be a decomposition of $X^+_k$ as products of $\widetilde{X}^+_k$ and $\nu^+_k$, 
where $\widetilde{X}^+_k$ are representatives of left coset classes of $X^+_k$ by $\nu^+_k$.
Analogously, let $X^-_l=\tau^-_l \widetilde{X}^-_l$ be a decomposition of $X^-_l$ as 
products of $\nu^-_l$ and $\widetilde{X}^-_l$, which consist of representatives of right coset classes of $X^-_l$ by $\nu^-_l$.

Write 
\[\tau=\prod_{i\neq k} \tau^+_k (\sum_{\widetilde{\sigma}^+_k\in \widetilde{X}^+_k} \widetilde{\sigma}^+_k)  \nu^+_k
\nu^-_l (\sum_{\widetilde{\sigma}^-_l\in \widetilde{X}^-_l} \widetilde{\sigma}^-_l) \prod_{j\neq k} \tau^-_j\]
and denote by $Q^-$ a summand in $(\sum_{\widetilde{\sigma}^-_l\in \widetilde{X}^-_l} \widetilde{\sigma}^-_l) \prod_{j\neq k} \tau^-_j T^-$.
We will show that $\overline{\rho}(\nu^+_k \nu^-_l Q^-)=0$.

We have $\nu^-_l Q^- = Q^- + {Q'}^-$, where $(Q')^-$ is obtained from $Q^-$ by switching the entries $i_1$ and $i_2$ at the positions $(j_1,l)$ and $(j_2,l)$.
Let $Q^+$ be such that $Rp(Q^+)=Q^-$. The identity $\rho_{i_1,j_2}\wedge\rho_{i_2,j_1}=-\rho_{i_2,j_1}\wedge\rho_{i_1,j_2}$ shows that 
$\overline{\rho}(\nu^-_l Q^+) = \overline{\rho}(Q^+) - \overline{\rho}((Q')^+)$, where $(Q')^+$ is obtained from $Q^+$ by switching the entries $m+j_1$ and $m+j_2$ at the positions $(k,i_1)$ and $(k,i_2)$. 

Since $\overline{\rho}(\nu^+_k Q^+)=\overline{\rho}(\nu^+_k (Q')^+)=\overline{\rho}(Q^+)+\overline{\rho}((Q')^+)$, we obtain 
$\overline{\rho}(\nu^+_k\nu^-_l Q^-)=0$. Therefore 
$\overline{\rho}(\nu^+_k\nu^-_l(\sum_{\widetilde{\sigma}^-_l\in \widetilde{X}^-_l} \widetilde{\sigma}^-_l) \prod_{j\neq k} \tau^-_j T^+)=0$ and 
we conclude that $\overline{\rho}(\tau T^+)=0$.
\end{proof}

If there are two different entries in the same column of $T^-$ such that the corresponding entries in $T^+$ lie in the same row of $T^+$, then $T^+$ and $T^-$ are called {\it insignificant}. If $T^+$ and $T^-$ are not insignificant, we call them {\it significant}.

\begin{lm}\label{disjoint}
If $S^+$ appears as a term in the expression $\tau T^+$, then $\tau S^+=\tau T^+$.
\end{lm}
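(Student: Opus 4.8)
The plan is to read $\tau=\tau^{+}\tau^{-}$ as a composition of two \emph{symmetrizers}. Recall that $\tau^{+}=\sum_{p\in X^{+}}p$ is the sum over the row group $X^{+}$ of $\mathcal{D}^{+}$, decomposing as the product $\prod_i\tau^{+}_i$ of commuting single-row symmetrizers, and that $\tau^{-}=\sum_{\sigma\in X^{-}}\sigma$, acting on $T^{+}$ through the repositioning $Rp$, decomposes as $\prod_j\tau^{-}_j$ over single columns of $\mathcal{D}^{-}$. The only formal property I need from each factor is \emph{absorption}: for a group sum $e_A=\sum_{a\in A}a$ and any $a_0\in A$ one has $e_A(a_0 U)=e_A U$, simply by reindexing. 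Thus $\tau^{+}(pU)=\tau^{+}U$ for $p\in X^{+}$, and $\tau^{-}(\sigma U)=\tau^{-}U$ for $\sigma\in X^{-}$. Since $S^{+}$ occurs in $\tau T^{+}=\tau^{+}\tau^{-}T^{+}$, I may write $S^{+}=p\,\sigma\,T^{+}$ with $p\in X^{+}$ and $\sigma\in X^{-}$ (viewed via $Rp$ inside $\mathrm{Sym}(\mathcal{D}^{+})$), so that the claim $\tau S^{+}=\tau T^{+}$ becomes $\tau^{+}\tau^{-}(p\,\sigma\,T^{+})=\tau^{+}\tau^{-}T^{+}$.

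The core of the argument is then to show that the set of tableaux occurring in $\tau T^{+}$ is closed under $\tau$, i.e.\ that the occurring terms regenerate the whole sum. The clean way to see this is to establish that the row group $X^{+}$ and the repositioned column group $\widetilde{X}^{-}=Rp(X^{-})$ interact compatibly, so that $\tau^{+}\tau^{-}$ behaves, on the relevant tableaux, as a single symmetrizer over the group $H=\langle X^{+},\widetilde{X}^{-}\rangle$ generated by them. Granting a compatibility statement of the form $X^{+}\widetilde{X}^{-}=\widetilde{X}^{-}X^{+}$ on the terms in question, the product set $X^{+}\widetilde{X}^{-}$ is an $H$-coset-sum and $\tau$ is (a scalar multiple of) $e_H$; then $S^{+}=hT^{+}$ for some $h\in H$, and absorption gives $\tau S^{+}=e_H(hT^{+})=e_H T^{+}=\tau T^{+}$. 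Concretely I would reduce to the generators, transferring a single transposition $p=\nu^{+}_k\in X^{+}$ past $\tau^{-}$ by means of the commuting square defining $Rp(\sigma)=Rpos^{-1}\sigma\,Rpos$ (Subsection~\ref{synch}), exactly as transpositions were tracked in the proof of Lemma~\ref{zero}, and then reindexing the resulting $\tau^{-}$-sum.

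The hard part will be this interaction step, because $\tau^{+}$ and $\tau^{-}$ do \emph{not} commute as operators in general: rows of $\mathcal{D}^{+}$ and columns of $\mathcal{D}^{-}$ cross, and a naive commuting argument fails. What saves the statement is the rigidity of the repositioning map from the definition in Subsection~\ref{synch}: cells sharing a row of $\mathcal{D}^{+}$ and cells sharing a column of $\mathcal{D}^{-}$ are matched through $Rpos$ in a way that forces any obstruction to commutation to be accompanied by a coincidence of entries. I would therefore carry out the transposition transfer cell-by-cell, distinguishing the case where $\nu^{+}_k$ and the corresponding $\nu^{-}_l$ act on the same pair of cells (where they coincide under $Rp$, as in Lemma~\ref{zero}) from the generic case; degenerate terms, namely tableaux with a repeated entry in a column of $\mathcal{D}^{+}$, are exactly those with $\overline{\rho}=0$ and must be accounted for when matching multiplicities. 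Establishing that these coincidences are precisely enough to make $X^{+}\widetilde{X}^{-}$ closed on the occurring terms is the step I expect to require the most care, and it is where the combinatorics of $Rpos$ does the essential work.
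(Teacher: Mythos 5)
Your plan correctly isolates where the difficulty lies, but the step you defer is precisely the one that cannot be granted, so as it stands the argument has a genuine gap. You propose to treat $\tau=\tau^{+}\tau^{-}$ as (a scalar multiple of) a single group symmetrizer $e_{H}$ for $H=\langle X^{+},\widetilde{X}^{-}\rangle$ and then invoke absorption $e_{H}(hT^{+})=e_{H}T^{+}$. But $e_{X^{+}}e_{\widetilde{X}^{-}}$ is a Young-symmetrizer--type product: the product set $X^{+}\widetilde{X}^{-}$ is not a subgroup in general, the group $H$ it generates is typically much larger than $X^{+}\widetilde{X}^{-}$ (often all of $\mathrm{Sym}(\mathcal{D}^{+})$), and $e_{X^{+}}e_{\widetilde{X}^{-}}\neq c\,e_{H}$. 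The compatibility $X^{+}\widetilde{X}^{-}=\widetilde{X}^{-}X^{+}$ that you ask to be ``granted'' is exactly the assertion that would make the product set a group, and it fails for crossing rows and columns; so absorption is not available. A second, independent problem is that you treat $\widetilde{X}^{-}=Rp(X^{-})$ as a fixed subgroup of $\mathrm{Sym}(\mathcal{D}^{+})$: the conjugating bijection $Rpos$ is attached to the tableau, and $\tau S^{+}$ is computed with the repositioning data of $S^{+}$ (the transported map, under which $s^{-}_{\sigma_{-}(j_t),l}=\sigma_{+}(i_t)$), not with that of $T^{+}$. So even the reformulation ``$\tau^{+}\tau^{-}(p\sigma T^{+})=\tau^{+}\tau^{-}T^{+}$'' with a single fixed $\tau^{-}$ is not the statement to be proved.

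The paper's proof avoids any group-closure claim. It tracks entries explicitly through the chain
$T^{+}\to T^{-}\to R^{-}=\sigma_{-}T^{-}\to R^{+}\to S^{+}=\sigma_{+}R^{+}\to S^{-}$,
records the resulting formulas $s^{+}_{k_t,\sigma_{+}(i_t)}=m+\sigma_{-}(j_t)$ and $s^{-}_{\sigma_{-}(j_t),l}=\sigma_{+}(i_t)$, and then runs the reverse chain from $S^{+}$ with $\sigma_{-}^{-1}$ and $\sigma_{+}^{-1}$, using the repositioning data of $S^{+}$, to land back exactly on $T^{+}$ and $T^{-}$. This exhibits a bijection between the pairs $(\sigma_{-},\sigma_{+})$ indexing the terms of $\tau T^{+}$ and those indexing the terms of $\tau S^{+}$, which is what gives $\tau S^{+}=\tau T^{+}$; it is a groupoid-style reversibility statement, not a statement that $X^{+}\widetilde{X}^{-}$ is closed. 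To repair your write-up you would need to replace the symmetrizer/absorption framework by this explicit transport of the repositioning map along the permutations, in the spirit of the bookkeeping already done in Lemma~\ref{zero} and Subsection~\ref{synch}.
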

\begin{proof}
We will consider the following sequence of tableaux: 
\[\begin{CD}
T^+ @>Rp>> T^- @>\sigma_->> R^- @>(Rp)^{-1}>> R^+@>\sigma_+>> S^+ @>Rp>> S^-,
\end{CD}\]
where $\sigma_-\in X^-$ and $\sigma_+\in X^+$.
We will first assume that $\sigma_-\in X^-_l$ and then combine such permutations to a general $\sigma_-\in X^-$ later.

Keeping in mind the compatibility of maps $Rp$, $Rp^{-1}$ and actions of $\sigma_-$ and $\sigma_+$, 
we describe the entries in the $l$-th column of each tableau $T^-, R^-, S^-$ and the corresponding entries in $T^+, R^+, S^+$ as follows.

 Let 
$[j_1, l], \ldots, [j_u,l] \in \mathcal{D}^-$ are entries in the $l$-th column of $\mathcal{D}^-$, and corresponding to these there are entries
$[k_1,i_1], \ldots, [k_u,i_u] \in \mathcal{D}^+$ such that $t^+_{k_t,i_t}=m+j_t$ and $t^-_{j_t,j}=i_t$ for each $t=1, \ldots, u$.

The action of $\sigma_-$ is given as $\sigma_-[j_t,l]=[\sigma_-(j_t),l]=[j_{\sigma_-(t)},l]$ for each $t=1, \ldots, u$, where the action on indices $j_1, \ldots, j_u$ (denoted also by $\sigma_-$) is induced by $\sigma_-$.
Then $r^-_{\sigma_-(j_t),l}=t^-_{j_t,l}=i_t$ and $r^+_{k_t,i_t}=m+\sigma_-(j_t)$ for each $t=1, \ldots, u$.

The permutation $\sigma_+$ sends $[k_t,i_t]$ to $[k_t, \sigma_+(i_t)]=[k_t,i_{\sigma_+(i_t)}]$ for each $t=1, \ldots, u$, where the action on indices $i_1, \ldots, i_u$ (denoted also by $\sigma_+$) is induced by $\sigma_+$.
Then $s^+_{k_t,\sigma_+(i_t)}=r^+_{k_t,i_t}=m+\sigma_-(j_t)$ and $s^-_{\sigma_-(j_t),l}=\sigma_+(i_t)$ for each $t=1, \ldots, u$.
This compares to $t^+_{k_t,i_t}=m+j_t$ and $t^-_{j_t,j}=i_t$ for each $t=1, \ldots, u$.

Now consider the sequence of tableaux:
\[\begin{CD}
S^+ @>Rp>> S^- @>\sigma^{-1}_->> P^- @>(Rp)^{-1}>> P^+@>\sigma^{-1}_+>> Q^+ @>Rp>> Q^-.
\end{CD}\]
Using the above formulae, we obtain $q^+_{k_t,i_t}=m+j_t$ and $q^-_{j_t,l}=i_t$ for each $t=1, \ldots, u$, showing that $Q^+=T^+$ and $Q^-=T^-$.

This implies that $S^+$ appears as a term in the expression $\tau T^+$ if and only if $T^+$ appears as a term in the expression $\tau S^+$. Therefore, in this case $\tau S^+=\tau T^+$.
\end{proof}

\section{Operator $\tau$ and its action on Littlewood-Richardson tableaux}

Let us recall that the Littlewood-Richardson tableaux $T$ of shape $\la'/ \mu'$ and content $(0|\nu)$ play an important role later because they count a number of even-primitive vectors in the simple supermodule $L_G(\la)$. 

Instead of tableaux $T$, we will work with tableaux $T^+$ of shape $(\la^+/ \mu)'$.

\subsection{Clausen column and row preorders}

For a tableau $T^+$, and for every index $j$ corresponding to a column of $T^+$ and a number $m+1\leq k\leq m+n$, we define
$c_{jk}$ to be the number of occurences of symbols $\{m+1, \ldots, m+k\}$ in the columns of $T^+$ of index $j$ or higher.
We organize these entries in a form of a Clausen column matrix $C(T^+)=\begin{pmatrix}c_{jk}\end{pmatrix}$.

Additionally, for every index $i$ corresponding to a row of $T$ and a number $m+1\leq k\leq m+n$, we define 
$r_{ik}$ to be the number of occurences of symbols $\{m+1, \ldots, m+k\}$ in the rows of $T^+$ of index $i$ or lower.
We organize these entries in a form of a Clausen row matrix $R(T^+)=\begin{pmatrix}r_{ik}\end{pmatrix}$.

\begin{ex}\label{ex4}
Let $m=n=3$, $\la=(5,4,4|3,2,1)$, $\mu=(2,2,1)$ and $T^+$ be a tableau 
$\begin{matrix}&&\\&&4\\4&5&5\\5&6&6\\6\end{matrix}$  as in Example \ref{ex3}. Then the Clausen column matrix
$C(T^+)=\begin{pmatrix}2&1&1\\5&3&2\\8&5&3\end{pmatrix}$
and Clausen row matrix 
$R(T^+)=\begin{pmatrix}1&1&1\\2&4&4\\2&5&7\\2&5&8\end{pmatrix}$.
\end{ex}

It is clear that if $T'^+$ is obtained from $T^+$ by permuting entries in the same column, then $C(T'^+)=C(T^+)$, hence the Clausen column matrix 
can be defined for column tabloids, the equivalence classes of tableaux with respect to permutations of entries within columns.
Analogously, if $T'^+$ is obtained from $T^+$ by permuting entries in the same row, then $R(T'^+)=R(T^+)$, hence the Clausen row matrix 
can be defined for row tabloids, the equivalence classes of tableaux with respect to permutations of entries within rows.

The Clausen column preorder $\prec_c$ on the set of tableaux of the same skew shape is defined as follows.
Let $T^+$ and $T'^+$ be of the shape $(\la^+/ \mu)'$, $C(T^+)=(c_{jk})$ and $C(T'^+)=(c'_{jk})$. Then 
$T^+\prec_c T'^+$ if and only if $C(T^+)=C(T'^+)$ or for some $j$ and $k$ we have $c_{il}=c'_{il}$ for all $i>j$ and all $l=1, \ldots n$;
$c_{jl}=c'_{jl}$ for all $l<k$ and $c_{jk}<c'_{jk}$.
If $T^+\prec_c T'^+$ and $C(T^+)\neq C(T'^+)$, then we will write $T^+<_c T'^+$.

The Clausen row preorder $\prec_r$ on the set of tableaux of the same skew shape is defined as follows.
Let $T^+$ and $T'^+$ be of the shape $(\la^+/ \mu)'$, $R(T^+)=(r_{ik})$ and $R(T'^+)=(r'_{ik})$. Then 
$T^+\prec_r T'^+$ if and only if $R(T^+)=R(T'^+)$ or for some $i$ and $k$ we have $r_{jl}=r'_{jl}$ for all $j<i$ and all $l=1, \ldots n$;
$r_{il}=r'_{il}$ for all $l<k$ and $r_{ik}<r'_{ik}$.
If $T^+\prec_r T'^+$ and $R(T^+)\neq R(T'^+)$, then we will write $T^+<_r T'^+$.

\begin{lm}\label{l1}
The restriction of the Clausen preorder $\prec_c$, and $\prec_r$ respectively, to the set of semistandard tableaux of the skew shape $(\la^+/ \mu)'$ is a linear order. 
Consequently, the restriction of the Clausen preorders $\prec_c$ and $\prec_r$ to the set of Littlewood-Richardson tableaux of the same shape is a linear order.
\end{lm}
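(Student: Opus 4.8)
The plan is to observe that both $\prec_c$ and $\prec_r$ are, by construction, nothing more than the lexicographic comparison of the integer matrices $C(T^+)$ and $R(T^+)$, with the rows of these matrices read in the prescribed order (columns from the highest index downward, and within a fixed index from $l=1$ upward, in the case of $\prec_c$). As such, each relation is automatically reflexive, transitive, and \emph{total} on the set of all tableaux of shape $(\la^+/\mu)'$: any two such tableaux have comparable matrices. Consequently the only property that could fail, and the only one that genuinely uses semistandardness, is antisymmetry. I would therefore reduce the whole statement to showing that the assignment $T^+\mapsto C(T^+)$ (respectively $T^+\mapsto R(T^+)$) is \emph{injective} on semistandard tableaux of the given shape. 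Once this is known, $T^+\prec_c S^+$ together with $S^+\prec_c T^+$ forces $C(T^+)=C(S^+)$ and hence $T^+=S^+$, so $\prec_c$ (and likewise $\prec_r$) is a linear order.

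To prove injectivity of $C$, I would reconstruct $T^+$ column by column from $C(T^+)=(c_{jk})$. Setting $d_{jk}=c_{jk}-c_{j+1,k}$ (with $c_{j+1,k}=0$ once $j$ exceeds the last column) records the number of entries $\le m+k$ lying in column $j$ alone, so that $d_{jk}-d_{j,k-1}$ is exactly the number of cells of column $j$ carrying the symbol $m+k$; all of these quantities are determined by $C(T^+)$. Since $T^+$ is semistandard, each column is strictly increasing from top to bottom, hence every symbol occurs at most once in a column. Thus the multiset of entries of column $j$ is in fact a set, and knowing it determines the filling of that column uniquely, the entries being placed in increasing order down the fixed cells of the column. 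Letting $j$ range over all columns recovers $T^+$ completely, which proves that $C$ is injective.

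The argument for the row matrix is entirely parallel, with rows and columns and the strict/weak inequalities interchanged. From $R(T^+)=(r_{ik})$ one forms $r_{ik}-r_{i-1,k}$ to count the entries $\le m+k$ in row $i$, and a further difference in the $k$-index yields the number of cells of row $i$ equal to $m+k$. Here semistandardness only guarantees that rows are weakly increasing, but this suffices: the multiset of entries of a row, together with the requirement that it be weakly increasing along the fixed cells of that row, pins down the row uniquely. Hence $R$ is injective on semistandard tableaux as well, and $\prec_r$ is a linear order. The final assertion is then immediate, since Littlewood--Richardson tableaux are by definition semistandard and therefore form a subset of the semistandard tableaux of shape $(\la^+/\mu)'$, and the restriction of a linear order to any subset is again a linear order.

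I expect the reconstruction step to be the only real content of the proof; the point to watch is that semistandardness is invoked in two different guises, strict increase down columns for $C$ and weak increase along rows for $R$, and in each case one must check that the relevant monotonicity is precisely what upgrades ``knowing the multiset of a line'' to ``knowing the filling of that line.'' Everything else, namely totality and transitivity, is formal from the lexicographic definition of the preorders.
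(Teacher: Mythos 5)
Your proposal is correct and follows essentially the same route as the paper: totality and transitivity are formal from the lexicographic definition, and the only substantive point is that a semistandard tableau is determined by its Clausen matrix, which the paper also settles by observing that the matrix determines the multiset of entries in each line and that the monotonicity condition (strict down columns for $C$, weak along rows for $R$) then fixes the filling. Your write-up merely makes explicit the difference-taking reconstruction that the paper leaves implicit in the phrase ``since the entries in all columns are strictly increasing, we infer $T^+=T'^+$.''
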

\begin{proof}
Let $T^+$ and $T'^+$ be semistandard tableaux of the shape $(\la^+/ \mu)'$ with entries in the set $m+1, \ldots, m+n$; $T^+\prec_c T'^+$ and $T'^+\prec_c T^+$. 
Then $C(T^+)=C(T'^+)$, and since the entries in all columns are strictly increasing, we infer $T^+=T'^+$ and obtain an order on semistandard tableaux.
Since $\prec_c$ is linear order on tableaux with different Clausen matrices, the claim for semistandard tableaux and Littlewood-Richardson tableaux follows.
Analogous arguments work for the Clausen preorder $\prec_r$.
\end{proof}

\subsection{Linear independence of even-primitive vectors}

Assume that $\la$ is a hook partition and $\mu<\la$.
Denote by $LR((\la^+)'/ \mu',\nu)$ the set of all Littlewood-Richardson tableaux $T^+$ of shape $(\la^+)'/ \mu'$ and content $(0|\nu)$, where
$\nu$ is a partition containing $\omega$.

\begin{lm}\label{order}
Assume that $T^+$ is semistandard and shifted Yamanouchi. Let $i_1$ and $i_2$ be symbols appearing in the same column of $T^-$, 
say at its $j_1$-th and $j_2$-th row respectively, where $j_1<j_2$. If the corresponding symbols $m+j_1$ appear at the $k_1$-th row of $T^+$ and the corresponding symbol 
$m+j_2$ appears at the $k_2$-th row of $T^+$, then $k_1<k_2$.
\end{lm}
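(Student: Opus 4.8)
The plan is to translate the statement entirely into the language of the shifted reading word $w_{sh}(T)$ and then read the conclusion off from the lattice property of $w_{sh}(T)$ together with the semistandardness of $T^+$. Throughout I take $T^-=Rp(T^+)$ with the canonical repositioning map of Subsection \ref{synch}.

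First I would reinterpret the column index $l$ of $T^-$ in terms of appearances in $w_{sh}(T)$. By the definition of $Rp$, the entry $t^-_{i,\la^-_i+c}$ records (via $z(T^+)$) the column of $T^+$ carrying the $c$-th occurrence of $m+i$ read off $w(T^+)$. Since the prefix $w(L^-_{can})$ contributes exactly $\la^-_i$ copies of $m+i$, this $c$-th occurrence in $w(T^+)$ is precisely the $(\la^-_i+c)$-th appearance of $m+i$ in $w_{sh}(T)$; hence an entry sitting in column $l$ of row $i$ of $T^-$ corresponds to the $l$-th appearance of $m+i$ in $w_{sh}(T)$. Applying this to $i_1$ at $(j_1,l)$ and $i_2$ at $(j_2,l)$, both are the $l$-th appearance in $w_{sh}(T)$ of the symbols $m+j_1$ and $m+j_2$, respectively. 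Because $i_1,i_2$ lie in the skew part $\nu/\omega$ of $T^-$ we have $l>\la^-_{j_1}$ and $l>\la^-_{j_2}$, so both appearances fall inside the $w(T^+)$ portion of $w_{sh}(T)$, at reading positions $s_1$ and $s_2$ whose rows in $T^+$ are exactly $k_1$ and $k_2$, and whose columns are $i_1$ and $i_2$.

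Next I would invoke the shifted Yamanouchi hypothesis, namely that $w_{sh}(T)$ is a lattice word. For the values $m+j_1<m+j_2$ and the fixed count $l$, the lattice property forces the $l$-th appearance of $m+j_1$ to precede the $l$-th appearance of $m+j_2$ (in an initial segment ending just after the $l$-th $m+j_2$ the number of $m+j_1$ is at least $l$). This gives $s_1<s_2$. Since the reading word lists rows from top to bottom, $s_1<s_2$ immediately yields $k_1\le k_2$.

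The last, and main, point is to upgrade this to a strict inequality. Suppose $k_1=k_2=:k$. Then within $w(T^+)$ the positions $s_1<s_2$ lie in the same row $k$, and since each row is read right to left, the cell of $s_1$ lies strictly to the right of that of $s_2$, i.e. in a larger column of $T^+$. But $T^+$ is semistandard, so its $k$-th row is weakly increasing, forcing the cell further to the right to carry the larger value; this contradicts the fact that the right cell carries $m+j_1$ while the left cell carries $m+j_2$ with $j_1<j_2$. Hence $k_1\neq k_2$, and combined with $k_1\le k_2$ we conclude $k_1<k_2$. The only delicate bookkeeping is the identification in the first step of column $l$ of $T^-$ with the $l$-th appearance in $w_{sh}(T)$, together with the check that both appearances land in the $T^+$ part; once that dictionary is in place, the remainder is a short comparison of reading order against row order.
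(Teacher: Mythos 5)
Your proof is correct and follows essentially the same route as the paper's: the shifted Yamanouchi (lattice word) condition on $w_{sh}(T)$ forces the $l$-th appearance of $m+j_1$ to precede that of $m+j_2$, giving $k_1\le k_2$, and the case $k_1=k_2$ is excluded because the right-to-left row reading would place $m+j_1$ strictly to the right of $m+j_2$ in the same row, contradicting semistandardness. Your version merely spells out more explicitly the dictionary between columns of $T^-$ and ordinal appearances in $w_{sh}(T)$, which the paper leaves implicit.
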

\begin{proof}
Assume that the symbols $i_1$ and $i_2$ appear in positions $(j_1, l)$ and $(j_2, l)$ of $T^-$, respectively. 
Corresponding to this, symbols $m+j_1$ and $m+j_2$ appear at positions $(k_1,i_1)$ and $(k_2,i_2)$ in $T^+$.
Since $T^+$ is shifted Yamanouchi, we infer that $k_1\leq k_2$, and if $k_1=k_2$, then $i_2<i_1$. 
If $k_1=k_2$, then $T^+$ semistandard implies $m+j_2\leq m+j_1$, which is a contradiction. Therefore $k_1<k_2$.
\end{proof}

In the representation theory of Schur algebra, see for example sections 2.4 and 2.5 of \cite{martin}, an important role is played by bideterminants and their straigtening formula stating that every bideterminant is a linear combination of bideterminants based on a pair of semistandard tableaux of the same shape.

Analogous result is valid in the setting of the four-fold (or letter-place) algebra of \cite{grs}. A particular case of the straigtening formula (Theorem 8 of \cite{grs}) states 
that every bipermanent is a linear combination of bipermanents based on a pair of standard tableaux.

In our case, instead of a pair or (semi)standard tableaux, we will deal with a pair of related tableaux $(T^+, T^-)$ and the basis we will construct corresponds to the case when 
$T^+$ is semistandard and $T^-$ is anti-semistandard - see below. As a particular case of these pairs we will obtain Littlewood-Richardson tableaux. 

\begin{df}
A tableau is called anti-semistandard if the entries in its rows are strictly decreasing from left to right and entries in its columns are weakly decreasing from top to bottom.
\end{df}

\begin{lm}\label{rowsdecrease}
If $T^+$ is semistandard, then that entries in each row of $T^-$ are strictly decreasing from left to right.
\end{lm}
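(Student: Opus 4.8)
The plan is to translate the statement into a combinatorial condition on the reading word $w(T^+)$ and then verify it using the semistandardness of $T^+$. By the definition of the repositioning map $Rp$, the entries in the $i$-th row of $T^-$ (occupying columns $\la^-_i+1,\la^-_i+2,\dots$) are, read from left to right, precisely the values $z_s=z(T^+)_s$ recorded for the first, second, third, $\dots$ occurrences of the symbol $m+i$ in $w(T^+)$; here $z_s$ is the index of the column of $T^+$ in which that occurrence sits. Thus the lemma is equivalent to the assertion that, as one reads $w(T^+)$ and records the $T^+$-column of each successive appearance of a fixed symbol $m+i$, these column indices strictly decrease.

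Recall that $w(T^+)$ is formed by reading the rows of $T^+$ from top to bottom, each row from right to left. I would split the verification according to whether two consecutive appearances of $m+i$ lie in the same row of $T^+$ or in different rows. In the first case, since $T^+$ is semistandard its rows are weakly increasing, so the cells carrying $m+i$ in a fixed row form a contiguous block of columns; reading that block from right to left yields strictly decreasing column indices, which settles this case. The only remaining transitions are from the leftmost appearance of $m+i$ in a row $k$ to the rightmost appearance of $m+i$ in the next lower row $k'>k$ that contains the symbol $m+i$.

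The crux is therefore the inequality $p_k>q_{k'}$, where $p_k$ is the smallest column of row $k$ carrying $m+i$ and $q_{k'}$ is the largest column of row $k'$ carrying $m+i$. I would argue by contradiction: suppose $p_k\le q_{k'}$. Writing the shape $\mathcal{D}^+$ as a skew diagram $\alpha/\beta$, with the cell $(r,c)$ present exactly when $\beta_r<c\le\alpha_r$, the inequalities $q_{k'}\le\alpha_{k'}\le\alpha_k$ and $\beta_k<p_k\le q_{k'}$ show that the cell $(k,q_{k'})$ also belongs to $\mathcal{D}^+$. Since the columns of $T^+$ are strictly increasing, its entry is strictly less than the entry $m+i$ at $(k',q_{k'})$; since the rows of $T^+$ are weakly increasing and $p_k\le q_{k'}$, its entry is at least the entry $m+i$ at $(k,p_k)$. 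These two statements are contradictory, so $p_k>q_{k'}$, which completes the transition case.

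Combining the two cases gives that the recorded column indices strictly decrease throughout $w(T^+)$, which is exactly the claim. I expect the main obstacle to be this transition step: it is the only place where one must simultaneously exploit that rows are weakly increasing, that columns are strictly increasing, and that the intermediate cell $(k,q_{k'})$ genuinely exists in the (possibly gappy) skew shape $\mathcal{D}^+$. The within-row case and the bookkeeping relating the entries of $T^-$ to successive occurrences in the reading word are routine.
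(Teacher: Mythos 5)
Your proof is correct and takes essentially the same route as the paper's: both reduce the claim to comparing the columns of consecutive occurrences of a fixed symbol $m+i$ in the reading word, dispose of the same-row case by the right-to-left reading order, and in the different-row case derive a contradiction from semistandardness via the fourth corner of the rectangle spanned by the two cells (you use the corner in the upper row, the paper the one in the lower row, which is an immaterial difference).
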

\begin{proof}
Let $[j,l]$ and $[j,l+1]$ be two consecutive entries in the $j$-th row of $\mathcal{D}^-$, and $[k_1,i_1]$ and $[k_2,i_2]$ be the corresponding entries in $\mathcal{D}^+$, such that 
$t^+_{k_1,i_1}=m+j$ is the $l$-appearance and $t^+_{k_2, i_2}=m+j$ is the $(l+1)$-st appearance of $m+j$ in $w_{sh}(T)$. This implies $k_1\leq k_2$, 
and if $k_1=k_2$, then $i_1>i_2$.

Assume now that $k_1<k_2$ and $i_1\leq i_2$. Then the position $[k_2,i_1]\in \mathcal{D}^+$, and $T^+$ semistandard implies
$T^+_{k_1,i_1}=m+j < t^+_{k_2,i_1} \leq t^+_{k_2,i_2} =m+j$, which is a contradiction. Therefore $i_1>i_2$ and this 
shows that entries in the rows of $T^-$ are strictly decreasing from left to right.
\end{proof}

\begin{lm}\label{strict}
If $T^+$ semistandard and shifted Yamanouchi, then $T^-$ is anti-semistandard.
\end{lm}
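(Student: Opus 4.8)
The plan is to verify the two defining properties of an anti-semistandard tableau separately. The requirement that the entries in each row of $T^-$ strictly decrease from left to right is already furnished by Lemma \ref{rowsdecrease}, which uses only that $T^+$ is semistandard. Hence everything reduces to showing that the entries in each column of $T^-$ weakly decrease from top to bottom, and this is where the shifted Yamanouchi hypothesis has to enter. First I would observe that in the skew shape $\nu/\omega$ the cells of a fixed column $l$ occupy a contiguous block of rows, so it suffices to compare two consecutive entries $i_1=t^-_{j,l}$ and $i_2=t^-_{j+1,l}$ and prove $i_1\geq i_2$.

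Next I would translate this into a statement about $T^+$. By the construction of $Rp$, the entry $i_1$ is the column in $T^+$ of the $l$-th occurrence of $m+j$ in $w_{sh}(T)$, located at some position $(k_1,i_1)$, and $i_2$ is the column of the $l$-th occurrence of $m+(j+1)$, located at $(k_2,i_2)$. Applying Lemma \ref{order} to these two entries (which lie in the same column $l$ of $T^-$, at rows $j<j+1$) gives $k_1<k_2$. Thus I am reduced to the purely tableau-theoretic claim: if $m+j$ sits at $(k_1,i_1)$ and $m+(j+1)$ at $(k_2,i_2)$ with $k_1<k_2$, and these are the $l$-th appearances of their respective symbols in $w_{sh}(T)$, then $i_1\geq i_2$.

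For this claim I would argue by contradiction, assuming $i_1<i_2$. Semistandardness of $T^+$ then forces a rigid local configuration: the cell $(k_1,i_2)$ lies in $\mathcal{D}^+$ (since $(k_1,i_1)\in\mathcal{D}^+$ forces $\mu'_{k_1}<i_1<i_2\le(\la^+)'_{k_1}$), and comparing along the weakly increasing row $k_1$ and the strictly increasing column $i_2$ pins every entry of row $k_1$ between columns $i_1$ and $i_2$ to equal $m+j$ and every entry of row $k_2$ between these columns to equal $m+(j+1)$, with $k_2=k_1+1$. I would then read off the positions that these constant runs occupy in $w_{sh}(T)$ and compare the running counts of $m+j$ and $m+(j+1)$: together with the shift by $\la^-$ built into $w_{sh}(T)$, the strict inequality $i_1<i_2$ makes an occurrence of $m+(j+1)$ be read too early relative to the occurrences of $m+j$, producing an initial segment of $w_{sh}(T)$ in which $m+(j+1)$ appears strictly more often than $m+j$. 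This contradicts the lattice-word characterisation of shifted Yamanouchi, so $i_1\geq i_2$, and $T^-$ is anti-semistandard.

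I expect the last step—extracting the violating prefix of $w_{sh}(T)$ from the assumption $i_1<i_2$—to be the main obstacle. The difficulty is that the forbidden behaviour is genuinely global rather than local: the constant rectangle by itself is perfectly compatible with the lattice condition (it occurs, with $i_1=i_2$, in ordinary semistandard Littlewood-Richardson tableaux), so the contradiction cannot come from the rectangle alone and must instead exploit how the prefix of $w_{sh}(T)$ coming from $L^-_{can}$ (i.e.\ the shift by $\la^-$ relating the occurrence index $l$ to the indices $l-\la^-_j$ and $l-\la^-_{j+1}$ inside $w(T^+)$) interacts with the strict inequality $i_1<i_2$. Making this count precise, rather than the local semistandard bookkeeping, is the heart of the argument.
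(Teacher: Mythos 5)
Your proposal follows the paper's proof essentially step for step: rows handled by Lemma \ref{rowsdecrease}, the reduction to two consecutive entries of a column of $T^-$ with $k_1<k_2$ supplied by Lemma \ref{order}, the rigid two-row configuration forced by semistandardness under the assumption $i_1<i_2$, and a contradiction with the lattice-word property of $w_{sh}(T)$. The counting step you flag as the main obstacle is exactly how the paper concludes and is shorter than you fear: letting $i_2'$ be the last column of the run of $(m+j+1)$'s in row $k_2=k_1+1$, the prefix of $w_{sh}(T)$ ending at position $[k_1,\,i_1+(i_2'-i_2+1)]$ contains equally many (namely $l-(i_2'-i_2+1)$) copies of $m+j$ and of $m+j+1$ and ends in an $m+j$, so deleting that last letter yields a prefix violating the lattice condition.
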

\begin{proof}
By Lemma \ref{rowsdecrease}, entries in the rows of $T^-$ are strictly decreasing from left to right.

Next, let $[j, l]$ and $[j+1,l]$ be two consecutive entries in the $l$-th column of $\mathcal{D}^-$ and $[k_1,i_1]$ and $[k_2,i_2]$ be the corresponding entries in $\mathcal{D}^+$ such that $t^+_{k_1,i_1}=m+j$ is the $l$-appearance of $m+j$ and $t^+_{k_2, i_2}=m+j+1$ is the $l$-st appearance of $m+j+1$ in $w_{sh}(T)$. Then $k_1< k_2$ by Lemma \ref{order}.

We claim that $i_1\geq i_2$. Assume to the contrary that $i_1<i_2$ and assume that index $i_{2}'$ is maximal such that $[k_2,i_2']\in\mathcal{D}^+$ and 
$t_{k_2, i_2'}=m+j+1$.
Then the position $[k_1, i_2']$ belongs to the diagram $\mathcal{D}^+$. Since $T^+$ is semistandard, we must have $t^+_{k_2,a}=m+j+1$ for 
$a=i_2, \ldots, i_2'$; $k_2=k_1+1$ and $t^+_{k_1, b}=m+j$ for each $b=i_1, \ldots, i_2'$. 
The initial part of $w_{sh}(T)$ that ends at the position $[k_1,i_1+(i_2'-i_2+1)]\in \mathcal{D}^+$ has the last symbol $m+j$ and it contains the same number 
$l-(i_2'-i_2+1)$ of symbols $m+j$ as $m+j+1$. 
This is a contradiction with the assumption that $w_{sh}(T)$ is a lattice word.

Therefore entries in the columns of $T^-$ are weakly decreasing from top to bottom and $T^-$ is anti-semistandard.
\end{proof}

\begin{pr}\label{symmetry}
Assume $T^+$ is semistandard. Then $T^+$ is shifted Yamanouchi if and only if $T^-$ is anti-semistandard.
\end{pr}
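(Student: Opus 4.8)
The forward implication, that $T^+$ shifted Yamanouchi forces $T^-$ anti-semistandard, is exactly Lemma \ref{strict}, so the plan is to establish only the converse: assuming $T^+$ semistandard and $T^-$ anti-semistandard, I would show that $T^+$ is shifted Yamanouchi, that is, that $w_{sh}(T)$ is a lattice word. I would use the reformulation recorded after the definition: $w_{sh}(T)$ is a lattice word if and only if, for every $j$ and every $l$, whenever $m+j+1$ has an $l$-th occurrence in $w_{sh}(T)$ there is also an $l$-th occurrence of $m+j$ preceding it. Note that only the ``columns weakly decreasing'' half of anti-semistandardness will be needed.

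First I would dispose of the cases where one of the two relevant occurrences lies in the canonical prefix $w(L^-_{can})$. The symbol $m+j$ occurs $\nu_j$ times in $w_{sh}(T)$, the first $\omega_j=\la^-_j$ of these lying in $w(L^-_{can})$ and the rest in $w(T^+)$, with the $l$-th occurrence for $l>\omega_j$ corresponding to the cell $[j,l]\in\mathcal{D}^-$. If $l\le\omega_{j+1}$ both occurrences lie in $w(L^-_{can})$, where the whole of row $j$ precedes row $j+1$; if $\omega_{j+1}<l\le\omega_j$ the $l$-th $m+j$ still lies in the prefix while the $l$-th $m+j+1$ lies in $w(T^+)$; in both situations the required precedence is immediate. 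This leaves the essential case $l>\omega_j$, where both $[j,l]$ and $[j+1,l]$ lie in $\mathcal{D}^-$ (here $l\le\nu_{j+1}\le\nu_j$).

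In the essential case I would translate precedence into the geometry of $T^+$. Writing the $l$-th occurrence of $m+j$ as the cell $[k_1,i_1]$ of $T^+$ and the $l$-th occurrence of $m+j+1$ as $[k_2,i_2]$, the definition of the repositioning map identifies $t^-_{j,l}$ with the column $i_1$ and $t^-_{j+1,l}$ with the column $i_2$, so anti-semistandardness of the $l$-th column of $T^-$ yields $i_1\ge i_2$. Since $w(T^+)$ reads rows top-to-bottom and each row right-to-left, the desired precedence amounts to $k_1<k_2$. I would argue by contradiction: the case $k_1=k_2$ is excluded because $i_1\ge i_2$ with weak increase along the row would give $m+j\ge m+j+1$; and if $k_1>k_2$, the weak decrease of the left row-endpoints in the skew diagram $\mathcal{D}^+=(\la^+)'/\mu'$ guarantees that $[k_1,i_2]\in\mathcal{D}^+$, whereupon strict increase down column $i_2$ forces its entry to exceed $m+j+1$ while weak increase along row $k_1$ forces it to be at most $m+j$, a contradiction. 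Hence $k_1<k_2$ and the lattice condition holds.

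The step I expect to be most delicate is the geometric one, verifying that the intermediate cell $[k_1,i_2]$ genuinely belongs to $\mathcal{D}^+$. This rests on the fact that, because the inner boundary $\mu'$ is a partition, the leftmost occupied column weakly decreases down the rows, so that $i_2$, occupied in the higher row $k_2$, is at least the left endpoint of the lower row $k_1$; combined with $i_2\le i_1\le{}$right endpoint of row $k_1$, this places $[k_1,i_2]$ inside the skew shape. Once this is secured, the remainder is the bookkeeping matching the $l$-th occurrences in $w_{sh}(T)$ with the entries of $T^-$ and the cells of $T^+$ through the repositioning map, and combining the converse just proved with Lemma \ref{strict} gives the stated equivalence.
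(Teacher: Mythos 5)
Your argument is correct and follows essentially the same route as the paper's proof: one direction is delegated to Lemma \ref{strict}, and the converse is the same trichotomy on the rows $k_1,k_2$ of the two cells of $T^+$, using $i_1\ge i_2$ from the column condition on $T^-$ and semistandardness applied to the intermediate cell $[k_1,i_2]\in\mathcal{D}^+$ to rule out $k_1>k_2$. Your treatment is slightly more careful in two places where the paper is terse — the bookkeeping for occurrences lying in the prefix $w(L^-_{can})$, the justification that $[k_1,i_2]$ lies in the skew diagram, and the observation that $k_1=k_2$ is in fact impossible rather than merely harmless — but these are refinements of the identical argument.
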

\begin{proof}
The necessary condition is established in Lemma \ref{strict}.

For the sufficient condition, assume that $T^+$ is semistandard and $T^-$ is anti-semistandard. 
Let $[j_1,l]$ and $[j_2,l]$ belong to $\mathcal{D}^-$ and $j_1<j_2$. Then $t^-_{j_1,l}=i_1\geq i_2=t^-_{j_2,l}$ since $T^-$ is anti-semistandard.
Denote by $[k_1,i_1]$ and $[k_2,j_2]$ the corresponding elements of $\mathcal{D}^+$ such that $t^+_{k_1,i_1}=m+j_1$ and $t^+_{k_2,i_2}=m+j_2$.

If $k_1=k_2$, then $i_1>i_2$ and $m+j_1\leq m+j_2$ because $T^+$ is semistandard. This means that the smaller entry $m+j_1$ appear to the right of the bigger entry $m+j_1$ in the same row of $T^+$, hence the lattice condition in $w_{sh}(T)$ is satisfied for this pair.

If $k_1>k_2$, then $[k_1,i_2]\in \mathcal{D}^+$ and $T^+$ semistandard implies $m+j_2=t^+_{k_2,i_2}<t^+_{k_1,i_2}\leq t^+_{k_1,i_1}=m+j_1$, which is a contradiction.

If $k_1<k_2$, then the smaller entry $m+j_1$ appears in the higher row of $T^+$ than the bigger entry $m+j_2$, hence the lattice condition in $w_{sh}(T)$ is satisfied for this pair.

Therefore $T^+$ is shifted Yamanouchi.
\end{proof}

\begin{df}
If $T^+$ is semistandard and the corresponding $T^-$ is anti-semistandard, then $T^+$ is called marked. The set of all marked tableaux $T^+$ of the shape $(\la^+/ \mu)'$ and content 
$(0|\nu/\omega)$ is denoted by $M((\la^+)'/ \mu',\nu/\omega)$.
\end{df}

We will analyze tableaux $T'$ appearing in the expression $\tau T^+$ for $T^+$ marked. 

\begin{lm}\label{l2}
Assume $T^+\in M((\la^+)'/ \mu',\nu/\omega)$. If $T'^+$ appears in the expression for $\tau^- T^+$, then $T'^+\prec_c T^+$ and $T'^+\prec_r T^+$.
The tableau $T^+$ appears in $\tau^- T^+$ with the coefficient one.
\end{lm}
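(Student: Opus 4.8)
The plan is to reduce both order assertions to a single entrywise monotonicity of the Clausen matrices, and to settle the coefficient statement by a short injectivity argument.

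First I would recall, from the definition of $\tau^-$ and the correspondence analyzed in Lemma~\ref{disjoint}, that $\tau^- T^+=\sum_{\sigma\in X^-}\sigma T^+$ and that each $\sigma\in X^-$ acts on $T^+$ by permuting, \emph{within each column group} $P_l$ (the positions of $\mathcal{D}^+$ that correspond to the $l$-th column of $\mathcal{D}^-$), the values sitting at those positions, while fixing every other entry. Fix a column $l$ and list its positions in $\mathcal{D}^+$ as $[k_1,i_1],\dots,[k_u,i_u]$ with $t^+_{k_t,i_t}=m+j_t$ and $j_1<\dots<j_u$, so the associated values $v_t=m+j_t$ increase with $t$. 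Since $T^+$ is marked, $T^-$ is anti-semistandard and, by Proposition~\ref{symmetry}, $T^+$ is semistandard and shifted Yamanouchi; hence Lemma~\ref{order} gives $k_1<\dots<k_u$, while anti-semistandardness of the columns of $T^-$ gives $i_1\ge\dots\ge i_u$. Thus inside $P_l$ the three orderings are aligned: a smaller index $t$ means a smaller value, a smaller row, and a larger (or equal) column.

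The heart of the argument is the resulting monotonicity. Let $T'^+=\sigma T^+$ be any summand and fix $j,k$. Because $i_1\ge\dots\ge i_u$, the positions of $P_l$ lying in columns $\ge j$ are exactly the prefix $t=1,\dots,a_j$, which in $T^+$ carry the $a_j$ smallest values $v_1<\dots<v_{a_j}$; their contribution to $c_{jk}(T^+)$ is therefore $\min(a_j,N_k)$, where $N_k=\#\{t: v_t\le m+k\}$ is the total over $P_l$. After applying $\sigma$ these same positions carry some $a_j$-element subset of $\{v_1,\dots,v_u\}$, whose number of elements $\le m+k$ is at most $\min(a_j,N_k)$. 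Summing over all column groups $l$, which are disjoint and independently permuted, yields $c_{jk}(T'^+)\le c_{jk}(T^+)$ for all $j,k$. Since every entry only weakly decreases, either $C(T'^+)=C(T^+)$ or the first entry at which they differ (in the scan defining $\prec_c$) is a strict decrease; in either case $T'^+\prec_c T^+$. The row statement is verbatim with columns replaced by rows: since $k_1<\dots<k_u$, the positions of $P_l$ in rows $\le i$ are the prefix $t=1,\dots,b_i$, again carrying the smallest values, so $r_{ik}(T'^+)\le r_{ik}(T^+)$ for all $i,k$ and $T'^+\prec_r T^+$. For the coefficient, I would note that $\sigma\mapsto\sigma T^+$ is injective on $X^-$: the formulas above show $\sigma T^+=T^+$ forces $m+\sigma(j_t)=m+j_t$ at every position, i.e.\ $\sigma$ fixes each $j_t$ in each column, and since the row indices $j_t$ in a column of $\mathcal{D}^-$ are distinct this forces $\sigma=\mathrm{id}$. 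As each summand carries coefficient $+1$ and distinct $\sigma$ give distinct tableaux, $T^+$ is produced only by the identity and so appears exactly once.

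The main obstacle is the clean packaging of the monotonicity step: one must observe that the ``column $\ge j$'' selection and the ``row $\le i$'' selection each pick the \emph{same} prefix of the sorted list of positions in a column group, so that in $T^+$ this prefix carries precisely the smallest values and therefore maximizes the number of entries $\le m+k$. This alignment is exactly what Lemma~\ref{order} together with anti-semistandardness of $T^-$ provides; once it is in place, the entrywise inequalities $c_{jk}(T'^+)\le c_{jk}(T^+)$ and $r_{ik}(T'^+)\le r_{ik}(T^+)$ make both order statements immediate, and the remainder is bookkeeping.
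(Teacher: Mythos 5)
Your proof is correct and rests on exactly the same key inputs as the paper's: Lemma~\ref{order} and the anti-semistandardness of $T^-$ give, within each column group of $\mathcal{D}^-$, the alignment of the value order with the row order and the reverse column order in $\mathcal{D}^+$. You package the conclusion as entrywise domination of the Clausen matrices ($c_{jk}(T'^+)\le c_{jk}(T^+)$ and $r_{ik}(T'^+)\le r_{ik}(T^+)$, a slightly stronger statement) and get the coefficient from injectivity of $\sigma\mapsto\sigma T^+$, whereas the paper argues via the first row/column changed by $\sigma$; both routes are valid and essentially equivalent.
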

\begin{proof}
Denote the entries in the $l$-th column of $\mathcal{D}^-$, listed from top to bottom by $[j_1,l], \ldots, [j_s,l]$ (so that $j_t=j_1+t-1$ for each $1\leq t \leq s$), and by $\sigma\in X^-$ a permutation of these entries. Also, denote by
$[k_1, i_1], \ldots, [k_s,i_s]$ the corresponding entries in $\mathcal{D}^+$. 
Then the entry at the position $[k_t,i_t]$ in the tableau $T^+$ is the $l$-th appearance of $m+j_t$ in $w_{sh}(T)$, for each $t=1, \ldots, s$.
Then $i_1\geq i_2\geq \ldots \geq i_s$ since $T^-$ is anti-semistandard. Also, Lemma \ref{order} shows $k_1<k_2<\ldots <k_s$. 

Let $\sigma\in X^-$ be arbitrary such that $\sigma\neq 1$, and let $k$ be the smallest index such that there is a position $[k,i]\in \mathcal{D}^+$ that is been moved by $\sigma$. Then those entries in the $k$-th row of $T^+$ that are replaced in $T'^+=\sigma T^+$ (and there is at least one) are replaced by entries that are higher than the original entries 
(because $k_t<k_u$ implies $t^+_{k_t,i_t}=m+j_t<m+j_u=t^+_{k_u,i_u}$). Therefore, $T'^+<_r T^+$ and the tableau $T^+$ appears in $\tau^- T^+$ with the coefficient one.

If $\sigma\in X^-$ only permutes entries in the same column of $T^+$, then $C(T^+)=C(T'^+)$, where $T'^+=\sigma T^+$. In this case $T'^+\prec_c T^+$.
Otherwise, let $i$ be the highest index such that there is an entry in the $i$-th column of $T^+$ that is been moved by $\sigma$ to a different column. 
Then all entries in the $i$-th column of $T^+$ either remain the same or (on at least one occasion) are replaced in $T'^+$ by entries that are higher than the original entries 
(because $i_t > i_u$ implies $t^+_{k_t,i_t}=m+j_t<m+j_u=t^+_{k_u,i_u}$). Therefore $T'^+<_c T^+$.
\end{proof}

Because of the requirements we have imposed on tableaux $T$ and $T^{opp}$, it is clear that there is a one-to-one correspondence between $T$ and $T^+$ and between 
$T^{opp}$ and $T^-$. We will denote $\overline{\rho}(T)=\overline{\rho}(T^+)$, $\overline{\rho}(T^{opp})=\overline{\rho}(T^-)$,
$\overline{\rho}(\tau T)=\overline{\rho}(\tau T^+)$ and so on.
To a tableau $T^+$ we have assigned a pair of multiindices $I|J$ and vectors $v_{I|J}$, $\rho_{I|J}$ and $\overline{\rho}_{I|J}$. Let us denote $cont(T^+)=cont(I|J)$ and $v^+_T=v_{I|J}$.
It is clear that $cont(I|J)=cont(K|L)$ implies $v_{I|J}=v_{K|L}$. Since all tableaux $T'^+$ appearing in the expression for $\tau T^+$ have the same content as $T^+$, 
we have $v^+_{T'}=v^+_T$ and the vector $v^+_T\rho(\tau T^+)$, which is a linear combination of primitive vectors $\pi_{I|L}$, is an even-primitive vector.
Analogously, $v^+_T\overline{\rho}(\tau T^+)$, which is a linear combination of primitive vectors $\overline{\pi}_{I|L}$, is an even-primitive vector.

\begin{teo}\label{jeden} 
The vectors $\overline{v}(T^+)=v^+_T\overline{\rho}(\tau T^+)$ for $T^+\in M((\la^+)'/ \mu',\nu/\omega)$ are linearly independent over a ground field $K$.
\end{teo}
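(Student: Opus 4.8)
The plan is to prove linear independence by a leading-term (triangularity) argument, with the Clausen orders as the organizing principle and Lemma \ref{l2} as the engine. Since every $T^+\in M((\la^+)'/\mu',\nu/\omega)$ has the same shape $(\la^+/\mu)'$ and the same content $(0|\nu/\omega)$, the factor $v^+_T=v_{I|J}$ is one and the same element $v$ for all of them, so I will work directly with the vectors $\overline{v}(T^+)=v\,\overline{\rho}(\tau T^+)$ inside the $K$-space $H^0_{G_{ev}}(\la)\otimes\wedge^k Y$ and exhibit, for each marked $T^+$, a distinguished basis coordinate (a bideterminant tensored with a wedge monomial in the $y_{r,m+s}$) on which the family is triangularly supported.

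First I would analyze $\tau T^+$ at the level of formal tableau-combinations. By Lemma \ref{l2}, $\tau^- T^+=T^+ + \sum_{T'^+}T'^+$ with every $T'^+\prec_c T^+$ and $\prec_r T^+$, all coefficients positive (the $\tau$-operators carry no signs). The operator $\tau^+$ permutes entries within the rows of $\mathcal{D}^+$, hence preserves the row-Clausen matrix $R(\cdot)$; moreover $\tau^+=\sum_{\sigma\in X^+}\sigma$ is constant on $X^+$-orbits, and two tableaux of the given shape share the same $R$-matrix exactly when they have the same multiset of entries in each row, i.e.\ lie in one $X^+$-orbit. Consequently $\tau^+T'^+=\tau^+T^+$ for every $T'^+$ occurring in $\tau^-T^+$ with $R(T'^+)=R(T^+)$, so the part of $\tau T^+$ lying in the top row-Clausen class equals $N\,\tau^+T^+$ for an integer $N\ge 1$ (no cancellation, as all coefficients are positive). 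Within $\tau^+T^+=\sum_{\sigma\in X^+}\sigma T^+$ the companion of Lemma \ref{l2} for within-row symmetrization picks out $T^+$ itself (marked, hence semistandard, so the column-sorted representative of its $X^+$-orbit) as the $\prec_c$-extremal term, with positive coefficient. Ordering tableaux first by $\prec_r$ and then by $\prec_c$ — a linear order on the semistandard, hence marked, tableaux by Lemma \ref{l1} — and orienting so that column-sorted representatives are maximal, this identifies $T^+$ as the leading term of $\tau T^+$ with nonzero coefficient.

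Next I would pass from tableaux to vectors. Fix the order on single symbols $y_{r,m+s}\succ y_{r',m+s'}$ iff $r<r'$, or $r=r'$ and $s>s'$, and extend it to wedge monomials by sorting; then the leading monomial of $\rho_{i,j}$ is $y_{i,m+j}$ with the nonzero coefficient $D^+(1,\ldots,i)D^-(m+1,\ldots,m+j-1)$, so the leading monomial of $\overline{\rho}(S^+)$ is $y(S^+)=\bigwedge_l y_{i_l,m+j_l}$ where $(I|J)=Q^+(S^+)$, and every other monomial is strictly smaller. The multiset of pairs $\{(i_l,j_l)\}$ (column, entry over the boxes) recovers the column-sorted tableau, so distinct marked tableaux yield distinct leading monomials $y(T^+)$. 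Granting that the chosen monomial order is compatible with the combined Clausen order — $S^+\prec T^+$ implies $y(S^+)\prec y(T^+)$ — the only term of $\tau T^+$ capable of producing $y(T^+)$ in $\overline{v}(T^+)$ is $T^+$ itself. Hence the coefficient of $y(T^+)$ in $\overline{v}(T^+)$ is $N$ times $v$ times the leading bideterminant of $\overline{\rho}(T^+)$, which by the computation in the proof of Theorem \ref{p1} is a nonzero bideterminant of shape $\la$, i.e.\ a nonzero element of $H^0_{G_{ev}}(\la)$. Thus each $\overline{v}(T^+)$ is supported on $y(T^+)$ with nonzero coefficient and on no strictly larger monomial; since the $y(T^+)$ are pairwise distinct, the expansion matrix is triangular with nonzero diagonal, and the $\overline{v}(T^+)$ are linearly independent over $K$.

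The main obstacle is the verification that the single-symbol order above is genuinely compatible with the Clausen column preorder (so that $S^+\prec_c T^+$ forces $y(S^+)\prec y(T^+)$), together with the companion statement to Lemma \ref{l2} controlling $\tau^+$ within an $X^+$-orbit — i.e.\ that column-sorting is $\prec_c$-extremal. These are finite combinatorial facts about how the Clausen statistics change under the transpositions generating $X^+$ and $X^-$, and I expect them to follow from the same extensible-minor and lattice-word bookkeeping already used in Lemmas \ref{l2}, \ref{order} and \ref{strict}. Alternatively, one may route the conclusion through the straightening formula for bipermanents (Theorem 8 of \cite{grs}): a marked $T^+$ is precisely a pair $(T^+,T^-)$ with $T^+$ semistandard and $T^-$ anti-semistandard, and the independence of standard bipermanents yields the claim once $\overline{v}(T^+)$ is translated into that language.
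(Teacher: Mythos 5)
Your overall strategy---isolate $T^+$ as the leading term of $\tau T^+$ using Lemma \ref{l2} and the Clausen orders, then conclude by unitriangularity---matches the paper's at the level of tableaux, and several of your side observations are correct (all marked $T^+$ of the given shape and content share the same $v_{I|J}$; distinct marked tableaux give distinct multisets of pairs; the coefficient of the distinguished wedge monomial in $v\,\overline{\rho}(T^+)$ is the nonzero highest weight vector $v$). The gap is in the bridge you build from tableaux to vectors, namely the monomial order on $\wedge^k Y$. With the order you fix ($y_{r,m+s}\succ y_{r',m+s'}$ iff $r<r'$, or $r=r'$ and $s>s'$), take any row of $T^+$ with two cells in columns $i_1<i_2$ carrying distinct values $m+j_1<m+j_2$ (such a row exists for any marked $T^+$ with a nonconstant row, by semistandardness); the transposition $\sigma\in X^+$ swapping these values replaces $\{y_{i_1,m+j_1},y_{i_2,m+j_2}\}$ by $\{y_{i_1,m+j_2},y_{i_2,m+j_1}\}$ in the leading multiset, and since $y_{i_1,m+j_2}\succ y_{i_1,m+j_1}$ the leading monomial of $\overline{\rho}(\sigma T^+)$ is strictly \emph{larger} than $y(T^+)$. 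So the $\tau^+$ half already breaks your triangularity in the wrong direction. Reversing the order in the second coordinate repairs $\tau^+$ but then breaks $\tau^-$: a column transposition of $T^-$ turns a ``discordant'' pair of cells ($i$ decreasing as $j$ increases, which is exactly what anti-semistandardness of $T^-$ guarantees) into a ``concordant'' one, the opposite move to the one $\tau^+$ performs. Since a wedge monomial is determined precisely by the multiset of pairs $(i_l,j_l)$, no single total order on these monomials can make $y(T^+)$ simultaneously extremal over the $X^+$-orbit and over the $X^-$-images. This incompatibility is the actual difficulty of the theorem and cannot be deferred as ``finite combinatorial bookkeeping''; the companion extremality statement for $\tau^+$ that you request does not exist in the form you need it.

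The paper circumvents this by never ordering monomials of $\wedge^k Y$: it invokes the theorem that the bipermanents $\overline{\rho}(\tau^+ Q^+)$, for $Q^+$ semistandard of fixed skew shape and content, are linearly independent (Theorem 4.4 of \cite{t2}, generalizing \cite{grs}), which disposes of the entire $X^+$-direction structurally. One then expands $\overline{\rho}(\tau T^+)=\sum_{Q^+}\alpha_{Q^+}\overline{\rho}(\tau^+Q^+)$ over semistandard $Q^+$ and uses only Lemma \ref{l2} (the $\tau^-$ half), together with the fact that $X^+$ preserves the row Clausen matrix, to get $\alpha_{T^+}=1$ and $\alpha_{Q^+}\neq 0\Rightarrow Q^+<_r T^+$; unitriangularity with respect to $\prec_r$, which is a linear order on semistandard tableaux by Lemma \ref{l1}, then finishes the proof. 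Your closing sentence gestures at exactly this route; the repair is to promote that ``alternative'' to the main argument and abandon the monomial order on $\wedge^k Y$.
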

\begin{proof}
It is enough to show that vectors $\overline{\rho}(\tau T^+)$ for $T^+\in M((\la^+)'/ \mu',\nu/\omega)$ are linearly independent.
We will show that $\tau T^+$ has $T^+$ as its leading element with respect to the preorder $\prec_r$ and all other semistandard tableaux $Q^+$ appearing in the expression for $\tau T^+$satisfy $Q^+<_r T^+$.

Consider $\sigma\in X^+$. If $\sigma$ interchanges only identical entries in $T^+$, then $\sigma T^+ = T^+$. If $\sigma T^+ \neq T^+$, then $\sigma T^+$ is not semistandard and 
$\sigma T^+<_r T^+$. 
It follows from Lemma \ref{l2} that all nontrivial summands $T'^+$ in $\tau^- T^+$ (those $T'^+=\sigma T^+$ for $\sigma\in X^-$ such that $\sigma\neq 1$) satisfy $T'^+<_r T^+$.
Since $\sigma \in X^+$ permutes the rows of $T^+$ and $T'^+$, all tableaux $T''^+$ appearing as summands in $\tau^+ T'^+$ for $T'^+<_r T^+$ also satisfy $T''^+ <_r T^+$.
Therefore all summands $T''^+$ of $\tau T^+$ either equal to $T^+$, or otherwise $T''^+<_r T^+$ (and $T''^+$ is not semistandard). In any case $T''^+\prec_r T^+$.

According to Theorem 4.4 of \cite{t2} (see also Section 5 of \cite{t1} or Section 5.7 of \cite{bu}), there is a basis for bipermanents of fixed skew shape and content given by bipermanents corresponding to semistandard tableaux. This statement is a generalization of the classical result of \cite{grs}.

We can express $\overline{\rho}(\tau T^+)$ as 
\[\overline{\rho}(\tau T^+)=\sum_{Q^+ semistandard} \alpha_{Q^+} \overline{\rho}(\tau^+ Q^+),\]
a linear combination of the basis elements $\overline{\rho}(\tau^+ Q^+)$, where $Q^+$ are semistandard tableaux of 
the shape $(\la^+/ \mu)'$. 
Due to the above observations, $\alpha_{T^+}=1$ and $\alpha_{Q^+}\neq 0$ implies $Q^+<_r T^+$.
Therefore the terms in this expression for $\overline{\rho}(\tau T^+)$ are ordered with respect to $\prec_r$ and the highest term is 
$\overline{\rho}(\tau^+ T^+)$. 

Since $\prec_r$ restricted on semistandard tableaux is a linear order by Lemma \ref{l1}, we conclude that the expressions $\overline{\rho}(\tau T^+)$ for 
$T^+\in M({\la'}^+/ \mu',\nu/\omega)$ are linearly independent over a ground field $K$ of characteristic zero.
\end{proof}
\begin{rem}
The above theorem could be proved analogously using the ordering $\prec_c$ instead of $\prec_r$. 
\end{rem}

\subsection{$H^0_G(\la)$ irreducible} 
The indecomposable universal highest weight modules $V(\la)$ over classical Lie superalgebras were investigated by Kac in \cite{kac2, kac3}. He proved that 
$V(\la)$ is irreducible if and only if the weight $\la$ is {\it typical}. Using the contravariant duality given in Proposition 5.8 of \cite{z} that is induced by the supertransposition $\tau: c_{ij}\mapsto (-1)^{|i|(|j|+1)} c_{ji}$, we infer that the induced module $H^0_G(\la)$ is irreducible if and only if $\la$ is typical.

By Theorem 3.20 of \cite{br}, a weight $(\mu|\nu)$ is polynomial if and only if it is given by a hook partition $\la$.
If $\la$ is a hook partition, then by Theorem 4.1 of \cite{fm2}, $H^0_G(\la)$ is irreducible if and only if $\la^+_m\geq n$. 
In this case the even-primitive vectors in $H^0_G(\la)=L_G(\la)$ are in bijective correspodence with Littlewood-Richardson tableaux $LR(\la'/\mu',\nu)$ as was mentioned earlier.

We will show that if $\la^+_m\geq n$, then there is a bijective correspondence between $M((\la^+)'/ \mu',\nu/\omega)$ and $LR(\la'/ \mu',\nu)$. 

\begin{lm}\label{ss}
If $\la^+_m \geq n$, then $T^+$ semistandard implies $T$ semistandard. 
\end{lm}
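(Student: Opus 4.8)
The plan is to split $T$ into the two regions that define it, check the semistandard conditions region by region, and thereby reduce everything to a single inequality along the boundary column.

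First I would write $T$ as the union of $T^+$, occupying the cells $[i,j]$ with $j\le m$, and the canonical part, occupying the cells with $j>m$, where by construction the entry at $[i,j]$ equals $m+i$. These two regions are adjacent, separated by the line between columns $m$ and $m+1$, and no column of $T$ meets both. The canonical part is semistandard on its own: down any column $j>m$ its entries read $m+1,m+2,\dots$ (strictly increasing), and along any row they are constant. Since columns do not straddle the boundary, the strict increase down columns of $T$ follows at once from that of $T^+$ (the hypothesis) and of the canonical part. Rows of the skew shape $\la'/\mu'$ are contiguous, and within each region a row is weakly increasing, so the only genuinely new requirement is weak increase across the junction. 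Concretely, I must show that whenever a row $i$ has a cell $[i,m]$ in $T^+$ and continues into the canonical part at $[i,m+1]$, one has $t^+_{i,m}\le m+i$, the canonical value in that row.

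Next I would locate column $m$ of $T^+$. Using $\la_m=\la^+_m$ and that $\mu$ has at most $m$ parts, one checks that $[i,m]\in\la'/\mu'$ exactly when $\mu_m<i\le\la^+_m$; this is a contiguous block of rows whose bottom cell lies in row $\la^+_m$. The rows that actually require the junction inequality, namely those with $[i,m]\in T^+$ and $\la'_i\ge m+1$ (equivalently $i\le\la_{m+1}\le n$), all lie inside this block because $i\le\la_{m+1}\le n\le\la^+_m$. Hence it suffices to bound $t^+_{i,m}$ for all $\mu_m<i\le\la^+_m$.

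Then I would run the decisive estimate. Since $T^+$ is semistandard, the entries of column $m$ strictly increase from row $\mu_m+1$ down to row $\la^+_m$ and all lie in $\{m+1,\dots,m+n\}$, so the bottom entry satisfies $t^+_{\la^+_m,m}\le m+n$. Moving up from row $\la^+_m$ to row $i$ crosses $\la^+_m-i$ strict decreases, whence
\[
t^+_{i,m}\le (m+n)-(\la^+_m-i)=m+i+(n-\la^+_m)\le m+i,
\]
the last step being exactly where the hypothesis $\la^+_m\ge n$ enters. This is precisely the junction inequality, so $T$ is semistandard. The main obstacle is not this final estimate but the bookkeeping that precedes it: confirming that the only new condition is the single junction inequality (so that all column conditions and all interior row conditions are automatic), and verifying that every row needing that inequality lies in the contiguous block of column $m$ with bottom row $\la^+_m$. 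Once that reduction is in place, $\la^+_m\ge n$ turns the "strict increase read off from the bottom of the column" bound into $t^+_{i,m}\le m+i$, completing the argument.
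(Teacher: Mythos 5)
Your proof is correct and follows essentially the same route as the paper's: reduce to the single junction inequality $t^+_{i,m}\le m+i$ between column $m$ of $T^+$ and the canonical column $m+1$, then obtain it by reading the strict increase of column $m$ upward from its bottom cell in row $\la^+_m$, whose entry is at most $m+n$, so that $\la^+_m\ge n$ closes the estimate. The only difference is that you make explicit the bookkeeping (which rows actually meet the junction, contiguity of rows, the degenerate case where column $m$ of the skew shape is empty) that the paper treats more tersely.
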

\begin{proof}
If $T^+$ is semistandard, then entries in $T$ are increasing down each column. Assume that the last column of $T^+$ has an index $i\leq m$.
If $i<m$, then the second part of the tableau $T$ ($L^-_{can}$ of shape $\omega$) splits off from the first part $T^+$ of the tableau $T$, and then $T$ is automatically semistandard. Hence, assume $i=m$.
 Then its lowest entry appears at the 
$\la^+_i$-th row and its value is at most $m+n$. Since $\la^+_i\geq \la^+_m\geq n$, and entries in the $i$-th column of $T^+$ are increasing, then any entry in $T^+$ in the 
$k$-th row and $i$-th column has to be smaller or equal to $m+k$. Since the entry in the $k$-th row and the $(m+1)$-st column of $T$ (if any) equals to $m+k$, we conclude that $T$ is semistandard.
\end{proof}

The following is an example when $T^+$ is semistandard but $T$ is not.

\begin{ex}
Let $\la=(2,1|1,0)$, $\mu=(1,0)$ and $\nu=(2,1)$ in $GL(2|2)$. Then
$T=\begin{matrix}
&4&3\\
3
\end{matrix}$
is not semistandard while
$T^+=\begin{matrix}&4\\3 \end{matrix}$ is semistandard.
\end{ex}

Analogously to the map $Rp$ defined in \ref{synch}, we define a map $Opp$ which sends each tableau $T$ of shape $(\la/ \mu)'$ to a tableau $T^{opp}$ of the shape $\nu$
as follows.

\begin{df}
The tableau $T^{opp}=Opp(T)$ is obtained in the following way. 
When reading the word $w(T)$, if the symbol $w_s=m+i$ appears for the $j$-th time, then $t^{opp}_{i,\la^-_i+j}=z_s$, where $z=z(T)$.
The map $Oppos:\mathcal{D}\to \mathcal{D}^{opp}$ is defined to correspond to this setup.
\end{df}

\begin{ex}\label{ex5}
In the setup of Example \ref{ex2} we have 
$T=\begin{matrix}
&4&3\\
3&5&4
\end{matrix}$,
$w(T)=34453$, $z(T)=32321$,
and 
$T^{opp}=
\begin{matrix}
3&1\\
2&3\\
2
\end{matrix}$.
\end{ex}

The previous example shows that we cannot expect in general that the tableau $T^-$ is a subtableau of $T^{opp}$. 
However, such a property is important in relation to Littlewood-Richardson tableaux of shape $(\la/ \mu)'$.

\begin{df}
Tableaux $T$ and $T^{opp}$ are called behaved if the following equivalent diagrams are commutative, where the vertical arrows are natural inclusions.
\[\begin{CD} T @>Opp>> T^{opp}\\
@AAA @AAA\\
T^+ @>Rp>> T^-
\end{CD}\qquad \qquad 
\begin{CD} \mathcal{D}@>Oppos>> \mathcal{D}^{opp}\\
@AAA @AAA\\
\mathcal{D}^+ @>Rpos>> \mathcal{D}^-
\end{CD}\]
\end{df}

Next Lemma shows that there is a large class of tableaux that are behaved.

\begin{lm}\label{beh}
If a tableau $T$ as above is semistandard, then it is behaved.
\end{lm}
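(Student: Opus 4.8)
The plan is to prove that the position-level square
\[\begin{CD}\mathcal{D}@>Oppos>>\mathcal{D}^{opp}\\@AAA@AAA\\\mathcal{D}^+@>Rpos>>\mathcal{D}^-\end{CD}\]
commutes; the tableau-level square then follows formally, because both $Opp$ and $Rp$ transport the column index (the $z$-value) of a cell without change and the vertical inclusions preserve entries, so once the underlying cells are matched the recorded values agree automatically. So I fix a cell $c\in\mathcal{D}^+$, lying in row $k$ and column $p$ of $T$ and carrying the entry $t_{k,p}=m+i$. Under $Rpos$ the cell $c$ is sent into row $i$ of $\mathcal{D}^-$ at the slot determined by how many times $m+i$ has been read in $w(T^+)$ up to and including $c$; under $Oppos$ it is sent into row $i$ of $\mathcal{D}^{opp}$ at the slot determined by the corresponding count in $w(T)$. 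The rows already match, so only the placement within row $i$ must be compared. Since the inclusion $\mathcal{D}^-\hookrightarrow\mathcal{D}^{opp}$ deletes the first $\omega_i=\la^-_i$ columns of row $i$, commutativity is equivalent to the single numerical assertion: \emph{the number of occurrences of $m+i$ preceding $c$ in $w(T)$ exceeds the number preceding $c$ in $w(T^+)$ by exactly $\la^-_i$}.

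The words $w(T)$ and $w(T^+)$ differ precisely by the canonical cells of $T$, namely the entries in columns $>m$, which by the standing hypothesis on $T$ equal $m+i$ in row $i$, number $\la^-_i$ in that row, and occur in no other row. Hence the assertion above reduces to the heart of the matter: for every $T^+$-cell $c$ of value $m+i$, all $\la^-_i$ canonical copies of $m+i$ are read before $c$ in $w(T)$. I would isolate this as the crux.

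To establish it I use semistandardness of $T$ together with the fact that $w$ reads each row from right to left and the rows from top to bottom. Suppose $c$ lies in row $k$. If row $k$ extends past column $m$, i.e. $\la^-_k>0$, then its canonical block has entry $m+k$ and, since $c$ sits to the left of that block in the same row, weak increase along rows forces $m+i=t_{k,p}\le m+k$, so $i\le k$; when $k=i$ the $\la^-_i$ canonical copies occupy the right end of row $i$ and are therefore read before $c$, and when $k>i$ the whole of row $i$ is read before row $k$, so in either subcase all $\la^-_i$ canonical copies precede $c$, and as there are none outside row $i$ the count is exactly $\la^-_i$. In the remaining case $\la^-_k=0$ row $k$ carries no canonical cells; since $\la^-$ is a partition, $\la^-_i\le\la^-_k=0$ whenever $i\ge k$, so there is nothing to check in that range, whereas if $i<k$ the full row $i$ again precedes row $k$. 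This settles the numerical assertion in all cases, so both squares commute and $T$, $T^{opp}$ are behaved.

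The main obstacle is the third step: pinning down the geometry of the canonical block inside the skew hook shape (that it is exactly row $i$ in columns $>m$, of length $\la^-_i$) and disposing of the degenerate rows with $\la^-_k=0$ — in particular all rows below the $n$-th — where the weak-increase argument is vacuous and one must instead invoke monotonicity of $\la^-$ and the top-to-bottom reading order. One should also confirm that the column offset produced by $\mathcal{D}^-\hookrightarrow\mathcal{D}^{opp}$ is indeed $\la^-_i$, matching the count, and that no hidden appeal to column-strictness is needed beyond the weak row increase that semistandardness already supplies.
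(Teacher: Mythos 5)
Your argument is correct and follows essentially the same route as the paper: both reduce "behaved" to the statement that the first $\la^-_i$ appearances of the symbol $m+i$ in $w(T)$ are the canonical cells of row $i$, and both derive this from weak row increase up to the canonical block (equivalently, the entries $m+1,\ldots,m+k$ in column $m+1$) together with the right-to-left, top-to-bottom reading order. Your write-up is merely more explicit about the positional reduction and the degenerate rows with $\la^-_k=0$, which the paper leaves implicit.
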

\begin{proof}
Assume $T$ is semistandard and $k$ is such that $\la^-_k>0$. Since the first $k$ entries in the $m+1$-st column of $T$ are $m+1, \ldots, m+k$, $T$ semistandard implies that 
the first $\la^-_k$ appearances of symbol $m+k$ are in the $k$-th row of the second part of $T$, corresponding to $\la^-$. These appearances then transfer to the $k$-th row of 
the second part of $T^{opp}$ corresponding to $\omega$. Since this is true for all rows of $\la^-$, the tableau $T$ is behaved.
\end{proof}

\begin{ex}\label{ex6} In the setup of Example \ref{ex3}, 
$T= 
\begin{matrix}&&&4&4&4\\&&4&5&5\\4&5&5&6\\5&6&6\\6\end{matrix}$, \linebreak $w(T)=44455465546656$ and  
$T^{opp}=
\begin{matrix}6&5&4&3&1\\5&4&3&2&1\\4&3&2&1\end{matrix}$.
Hence $T$ is behaved.
\end{ex}

The following examples show that there is no particular correlation between properties $T^+$ semistandard and $T^+$ shifted Yamanouchi (even for behaved tableaux).

\begin{ex}
Let $\la=(3,2,2|1,1,0)$, $\mu=(2,1)$ and $\nu=(3,2,1)$ in $GL(3|3)$. 
Let $T=\begin{matrix}
&&6&4\\
&4&4&5\\
5&
\end{matrix}$.
Then 
$T^+=\begin{matrix}
&&6\\
&4&4\\
5&
\end{matrix}$,
$T^{opp}=
\begin{matrix}
4&3&2\\
4&1\\
3
\end{matrix}$
and 
$T^-=
\begin{matrix}
&3&2\\
&1\\
3
\end{matrix}$, 
showing that $T$ is behaved.
Then $T^+$ is not semistandard but $T^+$ is shifted Yamanouchi since the second appearance of symbol $4$ is before the second appearance of $5$. 
$T$, however, is not Yamanouchi 
because symbol $6$ appears before symbol $5$.
\end{ex}

\begin{ex}
Let $\la=(2,2|1,0,0)$, $\mu=(0,0)$ and $\nu=(2,1,1,1)$ in $GL(2|4)$. 
Let $T=\begin{matrix}
4&6&4\\
5&7&
\end{matrix}$.
Then
$T^+=\begin{matrix}
4&6\\
5&7
\end{matrix}$,
$T^{opp}=
\begin{matrix}
3&1\\
1&\\
2&\\
2&
\end{matrix}$
and 
$T^-=
\begin{matrix}
&1\\
1&\\
2&\\
2&
\end{matrix}$ showing that $T$ is behaved.
Then $T^+$ is semistandard and $T^+$ is not shifted Yamanouchi because $6$ appears before $5$.
\end{ex}

Nevertheless, there is the following relationship.

\begin{lm}\label{ssY}
If $T$ is semistandard and $T^+$ is shifted Yamanouchi, then $T$ is Yamanouchi.
\end{lm}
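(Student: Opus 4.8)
The plan is to argue entirely with reading words, using the two word-characterizations already recorded in the text: $T^+$ is shifted Yamanouchi exactly when $w_{sh}(T)=w(L^-_{can})\,w(T^+)$ is a lattice word, while $T$ is Yamanouchi exactly when $w(T)$ is a lattice word. First I would make the shapes of the two words explicit. Writing $B_i=(m+i)^{\la^-_i}$ for the block coming from the $i$-th row of $L^-_{can}$ and $r_i$ for the $i$-th row of $T^+$ read right to left, the defining property that the entry at $[ij]$ equals $m+i$ for $j>m$, together with the fact that the $L^-_{can}$-part occupies the right end of each row, gives
\[w_{sh}(T)=B_1B_2\cdots B_p\,r_1r_2\cdots r_p,\qquad w(T)=B_1r_1B_2r_2\cdots B_pr_p,\]
where $p$ is the number of rows. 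So the two words carry the same letters and differ only in how the blocks $B_i$ are interleaved among the $r_j$, and the lemma amounts to showing that the lattice property of the ``sorted'' word $w_{sh}(T)$ forces it for the ``interleaved'' word $w(T)$.

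The key step, and what I expect to be the main obstacle, is the placement estimate $(\star)$: if $T$ is semistandard and $T^+$ is shifted Yamanouchi, then $m+k$ occurs in $T$ only in rows $i\ge k$, equivalently every entry in row $i$ is $\le m+i$. Semistandardness alone does not give $(\star)$ (one can exhibit a semistandard $T$ with $m+k$ placed too high for which $w_{sh}(T)$ fails to be a lattice word), so both hypotheses must be used here. I would prove $(\star)$ as follows. For a row $i\le\ell(\la^-)$ the rightmost cell of that row lies in $L^-_{can}$ and equals $m+i$, so weak increase along the row forces all its entries to be $\le m+i$. For the remaining rows I argue by contradiction: choose the smallest $i>\ell(\la^-)$ carrying an entry $m+k$ with $k>i$; by minimality together with the staircase case, no symbol $\ge m+i$ occurs in any row $<i$. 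Evaluating the lattice condition of $w_{sh}(T)$ on the prefix ``all of $w(L^-_{can})$ followed by rows $1,\dots,i$ of $T^+$'' shows the counts of $m+i,m+i{+}1,\dots$ are weakly decreasing there (every $\la^-_l$ with $l\ge i$ vanishes), so $m+i{+}1$ must also occur in row $i$. Reading $w_{sh}(T)$ up to the point inside row $i$ just after all entries exceeding $m+i$ but before the first $m+i$, the count of $m+i$ is $0$ while that of $m+i{+}1$ is $\ge1$, contradicting that $w_{sh}(T)$ is a lattice word.

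Granting $(\star)$, I would finish by verifying the lattice inequality $N_{m+k}(w')\ge N_{m+k+1}(w')$ for every prefix $w'$ of $w(T)$, where $N_a$ counts occurrences of $a$; such a $w'$ reads rows $1,\dots,i-1$ in full and then an initial piece of row $i$ (inside $B_i$ for Type (a), or all of $B_i$ and part of $r_i$ for Type (b)). If $k\ge i$, then by $(\star)$ the letters $m+k$ and $m+k{+}1$ are absent from what has been read, apart from the harmless case $N_{m+i}(w')\ge 0=N_{m+i+1}(w')$, so the inequality is immediate. If $k\le i-1$, I compare $w'$ with the genuine prefix $w''$ of the lattice word $w_{sh}(T)$ that reads all of $B_1,\dots,B_p$ and then the same $r$-content as $w'$; since the blocks in $w''$ but not in $w'$ are $B_{i+1},\dots,B_p$ (and, in Type (a), the unread tail of $B_i$), all involving only letters $>m+(i-1)\ge m+k$, the counts of $m+k$ and $m+k{+}1$ in $w'$ and $w''$ agree (in Type (a) the partial block $B_i$ can only \emph{lower} $N_{m+i}(w')$, which only helps the inequality for $k=i-1$). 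As $w''$ is a prefix of a lattice word, the inequality holds there and hence for $w'$. This shows $w(T)$ is a lattice word, i.e.\ $T$ is Yamanouchi.
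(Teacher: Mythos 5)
Your proof is correct, and it takes a more explicit route than the paper's. The paper's proof is a two-sentence argument phrased in terms of individual occurrences rather than prefix counts: it declares that the only thing left to check is the lattice condition for the $l$-th occurrence of $m+j_1$ when that occurrence lies in the $L^-_{can}$-part of row $j_1$ (i.e.\ $l\leq\la^-_{j_1}$), the complementary case being absorbed into the definition of shifted Yamanouchi, and then disposes of that case by noting that semistandardness forces rows $1,\dots,j_1-1$ to carry only entries $\leq m+j_1-1$, so the $l$-th copy of $m+j_1$ sitting at the front of row $j_1$ precedes every larger symbol. Your argument instead verifies the prefix inequality $N_{m+k}(w')\geq N_{m+k+1}(w')$ for all prefixes $w'$ of $w(T)$, splitting on whether $k\geq i$ or $k\leq i-1$ for the row $i$ where $w'$ ends, and the case $k\leq i-1$ (comparison with the prefix $w''$ of $w_{sh}(T)$ with the same $r$-content, whose extra letters all exceed $m+k+1$ except for the harmless partial block $B_i$ when $k=i-1$) is essentially the paper's implicit reduction made explicit. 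What is genuinely new in your write-up is the placement estimate $(\star)$ that every entry of row $i$ is at most $m+i$, proved for the rows below $\ell(\la^-)$ by a contradiction argument against the lattice property of $w_{sh}(T)$; the paper never states or needs this (it only uses the weaker bound on rows meeting $L^-_{can}$, which follows from semistandardness alone), but $(\star)$ is correct, it is the natural analogue of the classical fact that Littlewood--Richardson tableaux have entries $\leq i$ in row $i$, and it buys you a cleaner, fully checkable case analysis at the cost of length. Both arguments are sound; yours trades the paper's brevity for a verification in which every step is explicit.
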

\begin{proof}
In order to show that $T$ is Yamanouchi we only need to verify the following. If the entries $i_1$ and $i_2$ appear at the position $(j_1, l)$ and $(j_2, l)$ of $T^{opp}$
such that $j_1<j_2$ and $l\leq \la^-_{j_1}$ (meaning that the position $(j_1,l)$ belongs to $\la^-$-part of $T^{opp}$), then the $l$-th appearance of $j_1$ in $w(T)$ comes before the
$l$-th appearance of $j_2$ in $w(T)$.

Since $T$ is semistandard, its first $j-1$ rows can only contain entries that do not exceed $m+j-1$. Since $l\leq l^-_j$, the first $l$ entries in the $j$-th row of $T$, read from right to left, are equal to $m+j$. Therefore, all the entries that are larger than $m+j$ appear only after the $l$-th appearance of the symbol $m+j$ in $w(T)$.
\end{proof}

\begin{pr}\label{compare}
If $\la^+_m\geq n$, then $T\in LR(\la'/ \mu',\nu)$ if and only if $T^+\in M((\la^+)'/ \mu',\nu/\omega)$.
\end{pr}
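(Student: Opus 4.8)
The plan is to reduce both memberships to purely combinatorial conditions and then compare them. Under the standing correspondence $T\leftrightarrow T^+$, the tableau $T^+$ is exactly the restriction of $T$ to the columns of index $\le m$, while the columns of index $>m$ form the canonical region, whose entry in row $i$ is $m+i$; in particular these canonical rows have weakly decreasing lengths $a_1\ge a_2\ge\cdots$. By the definition of $M$ together with Proposition \ref{symmetry}, membership $T^+\in M((\la^+)'/\mu',\nu/\omega)$ is equivalent to ``$T^+$ semistandard and shifted Yamanouchi,'' whereas $T\in LR(\la'/\mu',\nu)$ means ``$T$ semistandard and Yamanouchi'' (the content condition being automatic from the bijection). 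So the proposition reduces to the equivalence of these two statements.

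For the implication $T^+\in M\Rightarrow T\in LR$ I would just chain the available lemmas. Since $T^+$ is semistandard and $\la^+_m\ge n$, Lemma \ref{ss} gives that $T$ is semistandard; since moreover $T^+$ is shifted Yamanouchi, Lemma \ref{ssY} gives that $T$ is Yamanouchi. Hence $T$ is a Littlewood--Richardson tableau of the correct shape and content. This is the direction where the hypothesis $\la^+_m\ge n$ is actually consumed.

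For the reverse implication $T\in LR\Rightarrow T^+\in M$, semistandardness of $T^+$ is immediate: it is the restriction of the semistandard $T$ to a skew subshape, so weak increase along rows and strict increase down columns are inherited. The substantive point — and the step I expect to be the main obstacle — is to deduce that $T^+$ is shifted Yamanouchi, i.e. that $w_{sh}(T)=w(L^-_{can})\,w(T^+)$ is a lattice word, knowing only that $w(T)$ is a lattice word. This is a converse to Lemma \ref{ssY} that is not available as a prior result, so it must be proved here.

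I would establish it by reducing the lattice condition to pairs of consecutive symbols: it suffices to show for each $j$ that the subword of $w_{sh}(T)$ on $\{m+j,m+j+1\}$ is a lattice word. Passing from $w(T)$ to $w_{sh}(T)$ only moves the canonical entries to the front, so on $\{m+j,m+j+1\}$ the word $w(T)$ reads $(m+j)^{a_j}$, then the $m+j$-entries of the $j$-th row of $T^+$, then $(m+j+1)^{a_{j+1}}$, then the remaining $T^+$-entries, while $w_{sh}(T)$ reads $(m+j)^{a_j}(m+j+1)^{a_{j+1}}$ followed by all $T^+$-entries in the same order. Two structural facts make the transfer work: first $a_j\ge a_{j+1}$, so the initial canonical block already satisfies the prefix inequality; second, semistandardness of $T$ forces every $T^+$-entry in row $j$ to be $\le m+j$ (it lies to the left of the canonical $m+j$ of that row when $a_j>0$, and when $a_j=0$ one has $a_{j+1}=0$ so the canonical blocks are empty and the two subwords coincide). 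Consequently any prefix of the $w_{sh}(T)$-subword reaching into the shared $T^+$-tail carries exactly the same pair of counts as a corresponding prefix of the $w(T)$-subword, while the purely canonical (or canonical-plus-$m+j$) prefixes satisfy the inequality directly from $a_j\ge a_{j+1}$. Hence latticeness transfers, $T^+$ is shifted Yamanouchi, and $T^+\in M$. Combining the two implications gives the equivalence; alternatively one could phrase the hard direction through Proposition \ref{symmetry} by verifying instead that $T^-$ is anti-semistandard.
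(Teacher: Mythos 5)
Your proof is correct, and the forward direction ($T^+\in M\Rightarrow T\in LR$) is exactly the paper's argument: Lemma \ref{ss} to get $T$ semistandard from $\la^+_m\geq n$, then Lemma \ref{ssY} to get $T$ Yamanouchi. Where you diverge is the converse. The paper disposes of it in two lines by citing Lemma \ref{beh}: a semistandard $T$ is \emph{behaved}, so $T^-$ sits inside $T^{opp}$ as a subtableau, and the Yamanouchi property of $T$ is read off as the shifted Yamanouchi property of $T^+$. You instead prove the needed implication from scratch by restricting $w(T)$ and $w_{sh}(T)=w(L^-_{can})\,w(T^+)$ to each pair of consecutive symbols $\{m+j,m+j+1\}$ and transferring the prefix inequalities directly; the two structural inputs you isolate ($\la^-_j\geq\la^-_{j+1}$, and semistandardness of $T$ forcing the row-$j$ entries of $T^+$ to be at most $m+j$ when $\la^-_j>0$) are precisely what makes the paper's ``behaved'' mechanism work, so the arguments are morally the same but yours is self-contained where the paper leans on an auxiliary lemma. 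I checked your case analysis: prefixes of the restricted $w_{sh}(T)$ ending in the canonical block are handled by $\la^-_j\geq\la^-_{j+1}$, prefixes ending in row $j$ of the $T^+$-tail have counts $(\la^-_j+x,\la^-_{j+1})$ versus $(\la^-_j+x,0)$ in $w(T)$ and are still fine, and prefixes ending in rows $\geq j+1$ have identical counts in both words; the degenerate case $\la^-_j=0$ collapses the two subwords. So the argument is complete. Your route costs a little more writing but makes the mechanism explicit; the paper's route is shorter but only because Lemma \ref{beh} has already been established.
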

\begin{proof}
It is clear that $T$ semistandard implies $T^+$ semistandard.
Since $T$ is behaved by Lemma \ref{beh}, the tableau $T^-$ is a subtableau of $T^{opp}$. Therefore, $T$ Yamanouchi implies $T^+$ shifted Yamanouchi. 
This shows that $T\in LR(\la'/ \mu',\nu)$ implies $T^+\in M((\la^+)'/ \mu',\nu/\omega)$.

Conversely, if $T^+$ is semistandard, then $T$ is semistandard by Lemma \ref{ss}. Then Lemma \ref{ssY} shows that $T^+$ shifted Yamanouchi implies $T$ Yamanouchi.
Therefore, 
$T^+\in M((\la^+)'/ \mu',\nu/\omega)$ implies $T\in LR(\la'/ \mu',\nu)$.
\end{proof}

The following example shows that the assumption $\la^+_m\geq n$ cannot be removed from Proposition \ref{compare}.

\begin{ex}
Let $\la=(2,2|1,1,0)$, $\mu=(0,0)$ and $\nu=(2,2,1,1)$ in $GL(2|4)$. 
Let $T=\begin{matrix}
4&6&4\\
5&7&5
\end{matrix}$.
Then $T^+=\begin{matrix}
4&6\\
5&7
\end{matrix}$,
$T^{opp}=
\begin{matrix}
3&1\\
3&1\\
2&\\
2&
\end{matrix}$
and 
$T^-=
\begin{matrix}
&1\\
&1\\
2&\\
2&
\end{matrix}$ showing that $T$ is behaved.
Then $T^+$ is semistandard and shifted Yamanouchi but $T$ is neither semistandard nor Yamanouchi (because $6$ appears before $5$).
\end{ex}

If $\la$ is a hook partitition such that $\la^+_m\geq n$, then a complete desription of $G_{ev}$-primitive vectors in $H^0_G(\la)$ is given in the following theorem.

\begin{teo}\label{LR}
Assume $\la$ is a hook partition and $H^0_G(\la)$ is irreducible.
The set of all elements $\overline{v}(T^+)=v^+_T\overline{\rho}(\tau T^+)$ for $T^+\in M((\la^+)'/ \mu',\nu/\omega)$, form a basis of all even-primitive vectors of weight $(\mu|\nu)$ in $H^0_G(\la)$.
\end{teo}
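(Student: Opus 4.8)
The plan is to assemble three facts already in place: every $\overline{v}(T^+)$ is an even-primitive vector of weight $(\mu|\nu)$ lying in $H^0_G(\la)$; the family of these vectors is linearly independent; and its cardinality equals the dimension of the even-primitive weight space. Since $H^0_G(\la)$ is assumed irreducible, the hook criterion recalled above (Theorem 4.1 of \cite{fm2}) gives $\la^+_m\geq n$ together with the identification $H^0_G(\la)=L_G(\la)$, which is exactly what is needed to invoke the combinatorial comparison.

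First I would record that, for each $T^+\in M((\la^+)'/ \mu',\nu/\omega)$, Theorem \ref{p2} shows $\overline{v}(T^+)=v^+_T\overline{\rho}(\tau T^+)$ is a $G_{ev}$-primitive vector of $H^0_G(\la)$. Its weight is $(\mu|\nu)$: by the map $Q^+$ the associated multiindex $(I|J)$ satisfies $cont(I)=\la^+-\mu$ and $cont(J)=\nu-\la^-$, so $cont(I|J)=(\la^+-\mu\mid \nu-\la^-)$, and every $\overline{\pi}_{I|L}$ appearing in $\overline{v}(T^+)$ has weight $(\la^+-(\la^+-\mu)\mid \la^-+(\nu-\la^-))=(\mu|\nu)$. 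Hence all candidate vectors sit inside the even-primitive subspace of weight $(\mu|\nu)$.

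Next, linear independence of $\{\overline{v}(T^+):T^+\in M((\la^+)'/ \mu',\nu/\omega)\}$ over $K$ is precisely Theorem \ref{jeden} (in particular each such vector is nonzero). It then remains only to match the number of these vectors with the dimension of the target space. Because $\la^+_m\geq n$, Proposition \ref{compare} yields a bijection $M((\la^+)'/ \mu',\nu/\omega)\leftrightarrow LR(\la'/ \mu',\nu)$, so the two sets have the same cardinality. On the other hand, since $H^0_G(\la)=L_G(\la)$ is irreducible, the dimension of the space of even-primitive vectors of weight $(\mu|\nu)$ equals the Littlewood-Richardson coefficient $C^{\la'}_{\mu'\nu}$, which by the Littlewood-Richardson rule counts exactly the tableaux in $LR(\la'/ \mu',\nu)$ (Theorem 6.11 of \cite{br}, as recalled in Section 1). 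Comparing, the linearly independent family $\{\overline{v}(T^+)\}$ has cardinality equal to the dimension of the even-primitive weight space, and so it is a basis.

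Since the heavy lifting resides in Theorems \ref{p2} and \ref{jeden} and in Proposition \ref{compare}, the only real care needed here is bookkeeping: verifying that the indexing set $M((\la^+)'/ \mu',\nu/\omega)$, the counting set $LR(\la'/ \mu',\nu)$, and the coefficient $C^{\la'}_{\mu'\nu}$ genuinely refer to one and the same number, and that the irreducibility hypothesis enters only to license $\la^+_m\geq n$ (for Proposition \ref{compare}) and the identification $H^0_G(\la)=L_G(\la)$ (for the dimension formula). In the degenerate case where $(\mu|\nu)$ is not of the form $(\la^+-\iota\mid\la^-+\kappa)$ for a genuine pair of skew shapes, all three quantities vanish and the statement holds vacuously, so no separate argument is required.
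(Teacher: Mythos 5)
Your proposal is correct and follows essentially the same route as the paper: Theorem \ref{p2} for membership, Theorem \ref{jeden} for linear independence, and the count via Theorem 6.11 of \cite{br}, the Littlewood--Richardson rule, and Proposition \ref{compare}. You are in fact slightly more careful than the paper in spelling out that irreducibility is what licenses $\la^+_m\geq n$ (needed for Proposition \ref{compare}) and in verifying the weight computation, but the argument is the same.
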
 
\begin{proof}
If $H^0_G(\la)$ is irreducible, then the multiplicity of even-primitive vectors on $H^0_G(\la)=L_G(\la)$ of weight $(\mu|\nu)$ equals the Littlewood-Richardson coefficient $C^{\la'}_{\mu',\nu}$ by Theorem 6.11 of \cite{br}. 
This coefficient equals the cardinality of elements of the set $LR(\la'/ \mu',\nu)$ 
by Proposition 5.3 of \cite{fulton}.
By Proposition \ref{compare}, this is the same as the cardinality of elements of the set $M((\la^+)'/ \mu',\nu/\omega)$.

Therefore, using Theorem \ref{jeden}, we conclude that vectors $\overline{v}(T^+)$ for $T^+$ from $ M((\la^+)'/ \mu',\nu/\omega)$ form a basis of all even-primitive vectors of $H^0_G(\la)$ of the weight $(\mu|\nu)$.
\end{proof}

Theorem \ref{LR} gives a satisfactory description of even-primitive vectors in $H^0_G(\la)$ in the case when $\la$ is hook partition and $\la_m\geq n$. In the next section we will remove the condition $\la_m \geq n$ and together with the induced supermodule $H^0_G(\la)$ we will also consider its subsupermodule $\nabla_G(\la)$.

According to \cite{zubcom}, over a ground field of characteristic different from $2$, we have $\nabla_G(\la)=H^0_G(\la)$ if and only if $\lambda_m\geq n$ and $\lambda_{m+n}\geq 0$. If $char(K)=0$, then this implies $H^0_G(\la)=L(\la)$ is irreducible, the case investigated above.

Finally, we can describe even-primitive vectors in an arbitrary induced supermodule $H^0_G(\la)$ using the following observation.

\begin{pr}
Let $\la=(\la_1, \ldots, \la_m; \la_{m+1}, \ldots, \la_{m+n})$ be a dominant weight. Denote $\alpha^+=(1, \ldots, 1; 0, \ldots, 0)$ and $\alpha^-=(0, \ldots, 1;1, \ldots, 1)$.
Then there are nonnegative integers $a^+$, $a^-$ such that for the weight $\mu=\la+a^+ \alpha^+ + a^- \alpha^-$ there is $\nabla(\mu)=H^0_G(\mu)$ and 
$H^0_G(\la)\simeq H^0_G(\mu) \otimes D^+(1, \ldots, m)^{a^+} \otimes D^-(m+1, \ldots, m+n)^{a^-}$ as $G_{ev}$-supermodules.
\end{pr}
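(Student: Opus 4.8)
The plan is to reduce the assertion to the classical behaviour of induced $GL(m)$- and $GL(n)$-modules under twisting by powers of the determinant, and then to invoke the criterion of \cite{zubcom} for the equality $\nabla(\mu)=H^0_G(\mu)$. First I would pass to the $G_{ev}$-structure. By the isomorphism $\tilde{\phi}$ of \cite{z} recalled in Section 1, for every dominant weight $\nu$ one has $H^0_G(\nu)\simeq H^0_{G_{ev}}(\nu)\otimes \Lambda(Y)$ as $G_{ev}$-supermodules, where the factor $\Lambda(Y)$ does not depend on $\nu$. Hence it suffices to establish the corresponding isomorphism
\[H^0_{G_{ev}}(\la)\simeq H^0_{G_{ev}}(\mu)\otimes D^+(1,\ldots,m)^{\pm a^+}\otimes D^-(m+1,\ldots,m+n)^{\pm a^-},\]
and then tensor both sides with $\Lambda(Y)$; since the twisting factors are one-dimensional, purely even $G_{ev}$-modules, they commute with $\otimes\,\Lambda(Y)$ without introducing signs, so the desired isomorphism of the $H^0_G$'s follows.

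Second, I would factor $H^0_{G_{ev}}(\nu)$ as the external tensor product $H^0_{GL(m)}(\nu^+)\boxtimes H^0_{GL(n)}(\nu^-)$, where $\nu^+=(\nu_1,\ldots,\nu_m)$ and $\nu^-=(\nu_{m+1},\ldots,\nu_{m+n})$, and treat the two factors separately. On restriction to $G_{ev}$ the off-diagonal blocks vanish, so $D^+(1,\ldots,m)=\det(C_{11})$ is the determinant character of $GL(m)$ (of weight $\alpha^+$) and $D^-(m+1,\ldots,m+n)=\det(C_{22})$ is the determinant character of $GL(n)$ (of weight $\alpha^-$). The classical fact recalled in Section 1, that $H^0_{GL(m)}(\xi)\simeq H^0_{GL(m)}(\xi-(c,\ldots,c))\otimes\det(C_{11})^{c}$, shows that twisting by a power of $D^+(1,\ldots,m)$ shifts the $GL(m)$-highest weight by a constant vector, and analogously for $GL(n)$ and $D^-(m+1,\ldots,m+n)$. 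Since $\mu^+=\la^++(a^+,\ldots,a^+)$ and $\mu^-=\la^-+(a^-,\ldots,a^-)$, combining the two twists yields the required $G_{ev}$-isomorphism, with the exponents governed by $a^+$ and $a^-$.

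Third, I would choose the integers explicitly. Put $a^+=\max\{0,\,n-\la_m\}$ and $a^-=\max\{0,\,-\la_{m+n}\}$; these are nonnegative. Adding $a^+$ to the first $m$ components and $a^-$ to the last $n$ components preserves both chains of inequalities, so $\mu=\la+a^+\alpha^++a^-\alpha^-$ is again dominant, and by construction $\mu_m=\la_m+a^+\geq n$ and $\mu_{m+n}=\la_{m+n}+a^-\geq 0$. By the criterion of \cite{zubcom}, valid since $char(K)=0\neq 2$, these two inequalities give exactly $\nabla(\mu)=H^0_G(\mu)$.

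I do not expect a serious obstacle, since the statement is essentially the supergroup packaging of the classical determinant-twisting isomorphism. The only points demanding care are: (i) checking that the reduction to $G_{ev}$ is legitimate, namely that $\Lambda(Y)$ is genuinely weight-independent and that the even one-dimensional twists pass through $\otimes\,\Lambda(Y)$ unchanged; and (ii) matching the sign of the exponents of $D^+(1,\ldots,m)$ and $D^-(m+1,\ldots,m+n)$ with the direction of the shift $\mu=\la+a^+\alpha^++a^-\alpha^-$, so that the twisting factors carry precisely the weights needed to convert $H^0_{G_{ev}}(\mu)$ back into $H^0_{G_{ev}}(\la)$.
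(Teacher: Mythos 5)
Your proposal is correct and follows essentially the same route as the paper: reduce to the $G_{ev}$-isomorphism $H^0_G(\nu)\simeq H^0_{G_{ev}}(\nu)\otimes\Lambda(Y)$, twist $H^0_{G_{ev}}$ by powers of the two determinants, and choose $a^+$, $a^-$ so that $\mu_m\geq n$ and $\mu_{m+n}\geq 0$, whence $\nabla(\mu)=H^0_G(\mu)$ by the criterion of \cite{zubcom}. You merely make explicit what the paper leaves implicit (the concrete values of $a^+$, $a^-$ and the classical determinant-twisting fact), and your caution about the sign of the exponents is warranted, since the paper itself uses $D^+(1,\ldots,m)^{-a^+}\otimes D^-(m+1,\ldots,m+n)^{-a^-}$ in the remark following the proposition.
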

\begin{proof}
Choose $a^+$ and $a^-$ such that $\mu_m\geq n$ and $\mu_{m+n}\geq 0$. Then $\nabla(\mu)=H^0_G(\mu)$ by \cite{zubcom} or by direct observation that $D^+(1, \ldots ,m)^n \Lambda(Y)$ is polynomial.
We have $H^0_{G_{ev}}(\mu)\simeq H^0_{G_{ev}}(\la)\otimes D^+(1, \ldots, m)^{a^+} \otimes D^-(m+1, \ldots, m+n)^{a^-}$ as $G_{ev}$-supermodules.
Since $H^0_G(\mu)\simeq H^0_{G_{ev}}(\mu)\otimes \Lambda(Y)$ and $H^0_G(\la)\simeq H^0_{G_{ev}}(\la)\otimes \Lambda(Y)$ as $G_{ev}$-supermodules, 
the claim follows.
\end{proof}
Therefore even-primitive vectors in $H^0_G(\la)$ are obtained by tensoring even-primitive vectors in $H^0_G(\mu)$ (described in Theorem \ref{LR}) with 
$D^+(1, \ldots, m)^{-a^+} \otimes D^-(m+1, \ldots, m+n)^{-a^-}$.
 
\section{The supermodule $\nabla(\la)$}

\subsection{Schur superalgebras}
The category of polynomial supermodules over $GL(m|n)$ is equivalent to the category of supermodules over the corresponding Schur superalgebra $S(m|n)$, see \cite{br} or Section 2.2 of \cite{muir}. Under this equivalence, the supermodule $\nabla_G(\la)$ is identified with the costandard module $\nabla(\la)$ of $S(m|n)$. 
Moreover, the category of $S(m|n)$-supermodules is semisimple; its simple supermodules $L_S(\la)$ are parametrized by $(m|n)$-hook partitions $\la$, and their characters are given by Hook Schur functions $HS_{\la}(x,y)$. The function $HS_{\la}(x,y)$ is given by summing certain monomials corresponding to $(m|n)$-semistandard tableaux. In particular, by 6.11 of \cite{br} there is
\begin{equation}\label{HS} HS_{\la}(x,y)=  \sum_{\mu<\la} \sum_{\nu} C^{\la'}_{\mu' \nu} S_{\mu}(x)S_{\nu}(y),\end{equation}
where $S_{\mu}(x)$ is the Schur function on variables $x_1, \ldots, x_m$ corresponding to $c_{11}, \ldots,$ $ c_{mm}$ and 
$S_{\nu}(x)$ is the Schur function on variables $y_1, \ldots, y_n$ correspoding to $c_{m+1, m+1},$ $\ldots,  c_{m+n,m+n}$, and 
$C^{\la'}_{\mu' \nu}$ is the Littlewood-Richardson coefficient.

It follows that the multiplicity of even-primitive vectors of weight $(\mu|\nu)$ in $\nabla(\la)$ is $C^{\la'}_{\mu' \nu}$.
For more details the reader is asked to consult \cite{br} or \cite{muir}.

\subsection{$G_{ev}$-primitive vectors in $\nabla(\la)$}

From now on we will assume that a $(m|n)$-hook weight $\la$ is fixed and $(I|J)$ is admissible of length $k$ and content $cont(I|J)=(\iota|\kappa)$, such that $\mu=(\mu_1, \ldots, \mu_m)=\la^+-\iota$ and $\nu=(\nu_{m+1}, \ldots, \nu_{m+n})$ $=\la^-+\kappa$ are dominant partitions.
Denote by $P(\mu|\nu)=P_k(\mu|\nu)$ the set of even-primitive vectors in $H^0_{G_{ev}}(\la)\otimes \wedge^k Y$ (viewed as a subspace of $H^0_G(\la)$) of weight $(\mu|\nu)$ that are linear combinations of elements $\overline{\pi}_{K|L}$ for admissible $(K|L)$
of content $(\iota|\kappa)$.

Previously we have constructed the even-primitive vector $\overline{v}(T^+)$ in $H^0_G(\la)$ corresponding to a tableaux $T^+$, such that its weight is $(\mu|\nu)$ as above.
Denote by $k$ the length of $T^+$, which is the cardinality of the diagram $\mathcal{D}^+$. Theorem \ref{p2} shows that the vector $\overline{v}(T^+)=v^+_T\overline{\rho}(\tau T^+)$ is an even-primitive vector of weight $(\mu|\nu)$ in $P_k(\mu|\nu)$.
Since this construction is based on a tableau $T^+$, the weight $(\mu|\nu)$ of the vector $\overline{v}(T^+)$ is polynomial. 
In this section we will show that the vectors $\overline{v}(T^+)$ for $T^+\in M((\la^+)'/\mu', \nu/\omega)$ form a basis of all $G_{ev}$-primitive vectors of weight $(\mu|\nu)$ 
in the supermodule $\nabla(\la)$.

If $\la_m<n$, then $H^0_G(\la)$ is not a polynomial supermodule, and there is an even-primitive vector in $H^0_G(\la)$ that cannot be of the form $\overline{v}(T^+)$. 
However, every even-primitive vector from $P(\mu|\nu)$ has polynomial weight $(\mu|\nu)$ and belongs to $\nabla(\la)$.

We have already addressed the question raised in the comments after Theorem 4.4. of \cite{fm} and confirmed that every even-primitive vector in $H^0_G(\la)$ is a linear combinations of elements $\overline{\pi}_{K|L}$. We generalize this statement for even-primitive vectors in $\nabla(\la)$ as follows.

\begin{tr}\label{con1}
Every even-primitive vector of $\nabla(\la)$ of weight $(\mu|\nu)$ belongs to $P(\mu|\nu)$. A basis of even-primitive vectors in $\nabla(\la)$ of the weight $(\mu|\nu)$ consists
of vectors $\overline{v}(T^+)$ for $T^+\in M((\la^+)'/\mu', \nu/\omega)$.
\end{tr}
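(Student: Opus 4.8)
The plan is to prove Theorem~\ref{con1} by combining three ingredients already established in the paper: the linear independence of the vectors $\overline{v}(T^+)$, the multiplicity count coming from the Littlewood--Richardson coefficient, and the membership of arbitrary even-primitive vectors of $\nabla(\la)$ in $P(\mu|\nu)$. The key point is that, unlike in Theorem~\ref{LR}, we no longer assume $\la^+_m\geq n$, so $H^0_G(\la)$ need not be irreducible and the argument via $L_G(\la)$ is unavailable. Instead, I would pass through the Schur superalgebra $S(m|n)$ and use its semisimplicity.

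First I would establish that the dimension of the space of even-primitive vectors of weight $(\mu|\nu)$ in $\nabla(\la)$ equals the Littlewood--Richardson coefficient $C^{\la'}_{\mu'\nu}$. This follows from the character formula \eqref{HS} for the hook Schur function: since the category of $S(m|n)$-supermodules is semisimple, $\nabla(\la)=L_S(\la)$ decomposes as a $G_{ev}$-module according to $HS_\la(x,y)=\sum_{\mu}\sum_\nu C^{\la'}_{\mu'\nu}S_\mu(x)S_\nu(y)$, and the coefficient of the $L_{ev}(\mu|\nu)$-isotypic component is exactly $C^{\la'}_{\mu'\nu}$, which is the number of even-primitive vectors of that weight. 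Next I would invoke Proposition~\ref{compare}; however, since $\la^+_m\geq n$ is no longer assumed, I would need the bijection between $M((\la^+)'/\mu',\nu/\omega)$ and $LR(\la'/\mu',\nu)$ to hold without that hypothesis, or else argue directly that $|M((\la^+)'/\mu',\nu/\omega)|=C^{\la'}_{\mu'\nu}$. This is the main obstacle: the restrictive Lemma~\ref{ss} used $\la^+_m\geq n$, so I would instead reduce to the irreducible case by the final Proposition of the preceding section, tensoring $\nabla(\la)$ (equivalently its ambient $H^0_G(\la)$) with suitable powers of $D^+(1,\dots,m)$ and $D^-(m+1,\dots,m+n)$ to pass to a weight $\mu$ with $\mu_m\geq n$, where Theorem~\ref{LR} and Proposition~\ref{compare} apply, and observing that this tensoring induces a content-preserving bijection on the relevant sets of marked tableaux and commutes with the construction $T^+\mapsto\overline{v}(T^+)$.

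With the count in hand, I would combine it with Theorem~\ref{jeden}, which already guarantees that the vectors $\overline{v}(T^+)=v^+_T\overline{\rho}(\tau T^+)$ for $T^+\in M((\la^+)'/\mu',\nu/\omega)$ are linearly independent over $K$. By Theorem~\ref{p2} each such $\overline{v}(T^+)$ is a $G_{ev}$-primitive vector of weight $(\mu|\nu)$ lying in $H^0_G(\la)$; since its weight is polynomial, it belongs to $\nabla(\la)$, and since it is an integral linear combination of vectors $\overline{\pi}_{K|L}$ with $\mathrm{cont}(K|L)=(\iota|\kappa)$, it lies in $P(\mu|\nu)$. Thus we have $|M((\la^+)'/\mu',\nu/\omega)|=C^{\la'}_{\mu'\nu}$ linearly independent even-primitive vectors inside a space of dimension exactly $C^{\la'}_{\mu'\nu}$, so they form a basis.

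Finally I would prove the first assertion, that \emph{every} even-primitive vector of $\nabla(\la)$ of weight $(\mu|\nu)$ lies in $P(\mu|\nu)$. Since the $\overline{v}(T^+)$ form a basis of the full space of even-primitive vectors of that weight (by the dimension match just obtained) and each $\overline{v}(T^+)\in P(\mu|\nu)$, any even-primitive vector of weight $(\mu|\nu)$ is a $K$-linear combination of the $\overline{v}(T^+)$ and hence lies in the span $P(\mu|\nu)$ of the $\overline{\pi}_{K|L}$. This closes the circle: the existence of enough independent vectors of the prescribed form, together with the exact multiplicity from the Schur superalgebra, forces both that these vectors exhaust the even-primitive space and that every such vector is expressible through the $\pi_{K|L}$. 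I expect the only genuinely delicate step to be the removal of the hypothesis $\la^+_m\geq n$ in the cardinality identification, which the reduction-by-determinant-twist argument handles.
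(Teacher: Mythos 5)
Your proposal is correct and follows essentially the same route as the paper: the paper likewise reduces to the irreducible case by shifting $\la^+$ and $\mu$ by $n$ (i.e.\ tensoring with $D^+(1,\ldots,m)^n$), notes that this shift preserves the cardinality of the set of marked tableaux and the multiplicity of even-primitive vectors, identifies that common cardinality with the Littlewood--Richardson coefficient via Proposition~\ref{compare} and Theorem~\ref{LR}, and concludes with the linear independence from Theorem~\ref{jeden}. Your additional appeal to the semisimplicity of $S(m|n)$ for the count in $\nabla(\la)$ is a minor repackaging of the same multiplicity statement the paper quotes from Theorem 6.11 of Berele--Regev.
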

\begin{proof}
Denote by $\tilde{\la}$ and $\tilde{\mu}$ the partitions such that $\tilde{\la}^+_i=\la^+_i +n$ and $\tilde{\mu}_i=\mu_i+n$ for every $1\leq i\leq m$, and $\tilde{\la}^-_j=\la^-_j$ for every $1\leq j\leq n$.
Then the skew shape $(\tilde{\la}^+)'/\tilde{\mu}'$ and its diagram $\tilde{\mathcal{D}}^+$ are obtained by shifting the skew shape $(\la^+)'/\mu'$ and its diagram $\mathcal{D}^+$
by $n$ rows downwards. Therefore, the cardinalities of the sets $M((\tilde{\la}^+)'/\tilde{\mu}', \nu/\omega)$ and $M((\la^+)'/\mu', \nu/\omega)$ coincide.

Since $\tilde{\la}_m\geq n$, by Proposition \ref{compare} the cardinalities of of the sets $LR(\tilde{\la}'/\tilde{\mu}',\nu)$ and $M((\tilde{\la}^+)'/\tilde{\mu}', \nu/\omega)$
are the same, and by Theorem \ref{LR} and Theorem 6.11 of \cite{br} they are equal to the multiplicity $C^{\tilde{\la}'}_{\tilde{\mu}',\nu}$ of $G_{ev}$-primitive vectors of weight $(\tilde{\mu},\nu)$ in $H^0_G(\tilde{\la})$.

Using the representation of induced supermodules $H^0_G(\la)$ as $H^0_{G_{ev}}(\la) \otimes \wedge(Y)$, we derive that tensoring an arbitrary even-primitive vector 
of weight $(\mu|\nu)$ in $H^0_G(\la)$ with $D^+(1, \ldots, m)^n$ gives an even-primitive vector of weight $(\tilde{\mu}|\nu)$ in 
$H^0_G(\tilde{\la})$, and vice versa, tensoring an arbitrary even-primitive vector of weight $(\tilde{\mu}|\nu)$ in 
$H^0_G(\tilde{\la})$ with $D^+(1, \ldots, m)^{-n}$ gives an even-primitive vector of weight $(\mu|\nu)$ in $H^0_G(\la)$.
Since the linear independence of vectors is preserved by tensoring, we conclude that the multiplicity of even-primitive vectors of weight $(\mu|\nu)$ in $H^0_G(\la)$ is the same as 
the multiplicity of even-primitive vectors of weight $(\tilde{\mu}|\nu)$ in $H^0_G(\tilde{\la})$.

Since the cardinality of $M((\la^+)'/\mu', \nu/\omega)$ is the same as the multiplicity of all even-primitive vectors of weight $(\mu|\nu)$ in $H^0_G(\la)$, and 
elements $\overline{v}(T^+)$ for $T^+\in M((\la^+)'/\mu', \nu/\omega)$ are linearly independent by Theorem \ref{jeden}, we conclude that
the vectors $\overline{v}(T^+)$ for $T^+\in M((\la^+)'/\mu', \nu/\omega)$ form a basis of all even-primitive vectors in $\nabla(\la)$ of the weight $(\mu|\nu)$.
\end{proof}

\begin{cor}
Even-primitive vectors in $\nabla(\la)$ are exactly those even-primitive vectors in $H^0_G(\la)$ of polynomial weight.
\end{cor}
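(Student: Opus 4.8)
The plan is to prove the asserted equality of two sets by establishing two inclusions, treating each weight space separately; this is legitimate because every even-primitive vector is, by definition, homogeneous of a single weight $(\mu|\nu)$.

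First I would record the forward inclusion, which costs nothing new. By definition $\nabla(\la)$ is the largest polynomial subsupermodule of $H^0_G(\la)$, so it is polynomial, and hence all of its weights have nonnegative components, i.e.\ are polynomial. Moreover $\nabla(\la)$ is a $G_{ev}$-subsupermodule of $H^0_G(\la)$, so the $U^-_{ev}$-action on it is the restriction of the action on $H^0_G(\la)$; consequently any vector annihilated by $U^-_{ev}$ inside $\nabla(\la)$ is also annihilated by $U^-_{ev}$ inside $H^0_G(\la)$. Combining these, every even-primitive vector of $\nabla(\la)$ is an even-primitive vector of $H^0_G(\la)$ whose weight is polynomial.

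For the reverse inclusion I would fix a polynomial weight $(\mu|\nu)$ and compare dimensions. Write $E_\nabla$ and $E_H$ for the spaces of even-primitive vectors of weight $(\mu|\nu)$ in $\nabla(\la)$ and in $H^0_G(\la)$ respectively; the forward inclusion gives $E_\nabla \subseteq E_H$. Theorem \ref{con1} exhibits the basis $\{\overline{v}(T^+): T^+\in M((\la^+)'/\mu',\nu/\omega)\}$ of $E_\nabla$, so $\dim E_\nabla = |M((\la^+)'/\mu',\nu/\omega)|$. On the other hand, the proof of Theorem \ref{con1} already establishes that the multiplicity of all even-primitive vectors of weight $(\mu|\nu)$ in $H^0_G(\la)$ equals the same number $|M((\la^+)'/\mu',\nu/\omega)|$ (obtained there by shifting to $\tilde\la$ with $\tilde\la_m\geq n$, applying Proposition \ref{compare} and Theorem \ref{LR}, and transporting back by tensoring with $D^+(1,\ldots,m)^n$). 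Hence $\dim E_\nabla = \dim E_H < \infty$, and together with $E_\nabla \subseteq E_H$ this forces $E_\nabla = E_H$; in particular every even-primitive vector of $H^0_G(\la)$ of polynomial weight lies in $\nabla(\la)$.

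Assembling the two inclusions then finishes the argument: an even-primitive vector of $H^0_G(\la)$ of non-polynomial weight cannot lie in the polynomial supermodule $\nabla(\la)$, while one of polynomial weight does by the dimension equality above; thus the even-primitive vectors of $\nabla(\la)$ are exactly those even-primitive vectors of $H^0_G(\la)$ of polynomial weight. I do not anticipate a genuine obstacle, since all the substantive content is already contained in Theorem \ref{con1} and its proof; the one point requiring care is the uniform bookkeeping that $\dim E_\nabla = \dim E_H$ over every polynomial weight, which is exactly what lets the inclusion $E_\nabla \subseteq E_H$ be upgraded to an equality.
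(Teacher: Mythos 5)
Your proposal is correct and follows exactly the route the paper intends: the paper states this corollary without a separate proof because it is immediate from Theorem \ref{con1} and the dimension count inside its proof (the multiplicity of even-primitive vectors of polynomial weight $(\mu|\nu)$ in $H^0_G(\la)$ equals $|M((\la^+)'/\mu',\nu/\omega)|$, which is already realized inside $\nabla(\la)$), combined with the trivial observation that $\nabla(\la)$, being polynomial, contains no vectors of non-polynomial weight. Your two-inclusion bookkeeping is a faithful expansion of that argument.
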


\subsection{A connection of marked tableaux to pictures}

We will explain the connection of marked tableaux to pictures in the sense of Zelevinsky - see \cite{zel} and \cite{van1}. Our notation is a hybrid of \cite{zel} and \cite{van1}.

We will denote the set of partitions by $\mathcal{P}$ and for $\alpha, \beta,\kappa \in \mathcal{P}$ the Littlewood-Richardson coefficients by $C^{\alpha}_{\beta,\kappa}$.

\begin{df}
For a skew partition $\alpha/\beta$ define the partial orders $\leq_{\nwarrow}$ and $\leq_{\swarrow}$ on the entries $(i,j)$ of the diagram $[\alpha/\beta]$ as follows.

$(i,j)\leq_{\nwarrow} (i',j')$ if and only if $i\leq i'$ and $j\leq j'$;

$(i,j)\leq_{\swarrow} (i',j')$ if and only if $i\leq i'$ and $j\geq j'$.

We will also use the total ordering $\leq_r$ that refines $\leq_{\swarrow}$ and is defined as 
$(i,j)\leq_r (i',j')$ if and only if $i< i'$ or ($i=i'$ and $j\geq j'$).

\end{df}

\begin{df} Let $\alpha/\beta$ and $\gamma/\delta$ be skew partitions, and $f:\alpha/\beta \to \gamma/\delta$ is a map.
A map $f$ is called a picture from $\alpha/\beta$ to $\gamma/\delta$ if
\begin{itemize}
\item $f$ is a bijection
\item If $x \leq_{\nwarrow} y$ then $f(x)\leq_{\swarrow} f(y)$
\item If $f(x)\leq_{\nwarrow} f(y)$ then $x\leq_{\swarrow} y$.
\end{itemize}
The set of pictures from $\alpha/\beta$ to $\gamma/\delta$ is denoted by $Pict(\alpha/\beta,\gamma/\delta)$.
\end{df}

\begin{df}
Let $f:\alpha/\beta \to \gamma/\delta$ be a picture. Define the row reading of $f$ to be the tableau $E^+$ of shape $\alpha/\beta$ such that the entry at the position 
$(i,j)$ is the first coordinate of $f(i,j)$, and define the column reading of $f$ to be the tableau $E^-$ of shape $\gamma/\delta$ such that the entry at the position 
$(i,j)$ is the second coordinate of $f^{-1}(i,j)$.
\end{df}

It is immediate from the definitions that the row reading $E^+$ of a picture $f$ is a semistandard tableau and the columm reading $E^-$ is an anti-semistandard tableau.

\begin{ex}
Let $\alpha=(3,3,2,1)$, $\beta=(2,0,0,0)$, $\gamma=(4,2,2,1)$ and $\delta=(1,1,0,0)$. Let the picture $f:\alpha/\beta \to \gamma/\delta$ be given as
\[\begin{array}{ccc}
&&f\\
a&d&g\\
b&e&\\
c&&
\end{array}\mapsto 
\begin{array}{cccc}
&f&d&a\\
&g&&\\
e&b&&\\
c&&&
\end{array}.\]
Then the row and collumn readings of $f$ are given as 
\[E^+=\begin{array}{ccc}
&&1\\
1&1&2\\
3&3&\\
4&&
\end{array}\text{ and } 
E^-=\begin{array}{cccc}
&3&2&1\\
&3&&\\
2&1&&\\
1&&&
\end{array}.\]
\end{ex}

\begin{lm}\label{van}
A tableau $E$ of shape $\alpha/\beta$ is a row reading tableau of a picture $f:\alpha/\beta \to \gamma/\delta$ if and only if
$E$ is semistandard and $E$ is shifted Yamanouchi.
\end{lm}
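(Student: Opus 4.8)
The plan is to reduce the statement to Proposition \ref{symmetry} by identifying the picture formalism of this subsection with the repositioning machinery of Subsection \ref{synch}. Concretely, I would match the inner shape $\delta$ of the target with the offset $\omega=\la^-$, the row reading $E$ with a tableau $T^+$ of shape $\alpha/\beta$, and the column reading with the associated $T^-$. The first task is to make this identification canonical. I would show that a picture is determined by its row reading and that its column reading is exactly $Rp(E)$: inside a fixed row $r$ of $\gamma/\delta$, applying the second condition in the definition of a picture to the pairs $(r,c)\leq_{\nwarrow}(r,c')$ with $c<c'$ forces the preimages of $(r,\delta_r+1),(r,\delta_r+2),\dots$ to form a $\leq_{\swarrow}$-chain, hence to occur in the total order $\leq_r$ (top-to-bottom, right-to-left) that defines $w(E)$. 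This is precisely the rule defining $Rp$, so the column reading of any picture with row reading $E$ coincides with $Rp(E)$.

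Granting this, the forward implication is short. If $E$ is the row reading of a picture $f$, then $E$ is semistandard and, as noted after the definition of the row and column readings, the column reading is anti-semistandard; since that column reading equals $Rp(E)$, Proposition \ref{symmetry} gives that $E$ is shifted Yamanouchi (with respect to the offset $\delta$). For the converse, suppose $E$ is semistandard and shifted Yamanouchi. Proposition \ref{symmetry} then yields that $E^-:=Rp(E)$ is anti-semistandard, and I would take $f$ to be the bijection encoded by the pair $(E,E^-)$, whose row reading is $E$ by construction. It remains only to verify that $f$ satisfies the two defining conditions of a picture.

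This verification is the crux, and I would reduce each condition to covering pairs, using transitivity of $\leq_{\swarrow}$. For the first condition, a horizontal or vertical cover $x\leq_{\nwarrow}y$ in $\alpha/\beta$ gives the first-coordinate inequality of $f(x),f(y)$ directly from $E$ semistandard (rows weakly increasing, columns strictly increasing); the real content is the second-coordinate (column) inequality, which I would extract from $E^-$ anti-semistandard. In the cross-row case $f(x)=(a,b)$, $f(y)=(c,d)$ with $a<c$, I would assume $b<d$ for contradiction and exhibit the auxiliary cell $(a,d)$, which lies in $\gamma/\delta$ because $\delta_a<b<d\leq\gamma_c\leq\gamma_a$; the strict decrease along row $a$ and the weak decrease down column $d$ of $E^-$ then clash. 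The second condition is handled symmetrically: reducing to covers in $\gamma/\delta$, the column inequality on preimages comes from $E^-$ anti-semistandard and the row inequality from $E$ semistandard, again by producing an auxiliary cell $(i_2,j_1)$ in $\alpha/\beta$ to eliminate the bad case. I expect this production of the auxiliary cell, and the contradiction it yields, to be the main obstacle: it is the one step that uses the genuine two-dimensional geometry of the inner and outer partitions rather than the one-dimensional lattice-word bookkeeping, and it is exactly what turns the equivalence ``$E^-$ anti-semistandard'' $\Leftrightarrow$ ``$f$ a picture'' into a clean statement.
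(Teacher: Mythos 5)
Your proposal is correct, but it takes a genuinely different route from the paper: the paper's entire proof of Lemma \ref{van} is a citation, namely ``adjust the arguments in the proof of Proposition 2.6.1 of van Leeuwen \cite{van1} and the remark following it,'' together with a warning about the transposed conventions for $\leq_{\swarrow}$, whereas you give a self-contained argument that routes through the paper's own Proposition \ref{symmetry}. Your key structural move --- showing that the third picture axiom forces the preimages of each row of $\gamma/\delta$ to form a $\leq_{\swarrow}$-chain, hence to occur in the reading order $\leq_r$, so that the column reading of any picture with row reading $E$ is exactly $Rp(E)$ --- is exactly the bridge needed to make Proposition \ref{symmetry} applicable, and it reduces the forward implication to the observation (already recorded in the paper) that row readings are semistandard and column readings anti-semistandard. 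Your converse, verifying the two picture axioms on covering pairs via the auxiliary cells $(a,d)$ and $(i_2,j_1)$, checks out: the membership of the auxiliary cell in the skew diagram follows from the partition inequalities you list, and the clash between strict decrease along rows and weak decrease down columns of $E^-$ (respectively, weak increase along rows and strict increase down columns of $E$) closes each case; the reduction to covers is licensed because $\leq_{\nwarrow}$-comparable cells of a skew diagram are joined by a right/down path inside the diagram and $\leq_{\swarrow}$ is transitive. What your approach buys is a proof internal to the paper's machinery, making the equivalence ``$E^-$ anti-semistandard $\Leftrightarrow$ $f$ a picture'' explicit rather than imported; what the paper's approach buys is brevity and an explicit pointer to the standard source. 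The only caveats are cosmetic: you invoke ``the second condition'' where you mean the third item in the definition (the implication from $f(x)\leq_{\nwarrow}f(y)$), and one should say explicitly that Proposition \ref{symmetry} and the lemmas feeding it, though stated for the shapes $(\la^+/\mu)'$ and $\nu/\omega$, are purely combinatorial statements about arbitrary skew shapes with ``shifted Yamanouchi'' read relative to the inner shape $\delta$ of the target.
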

\begin{proof}
The proof is obtained by adjusting arguments in the proof of Proposition 2.6.1 of \cite{van1} and the remark following it.
Here we need to keep in mind that the definitions of pictures and the partial order $\leq_{\swarrow}$ in \cite{van1} differs from ours (which conform to those given in \cite{zel}) by transposition of domain an image side - see the footnote on page 325 of \cite{van1}.
\end{proof}

\begin{pr}\label{corresp}
There is a bijective correspondence between pictures $f:(\la^+)'/\mu' \to \nu/\omega$ and marked tableau $T^+\in M((\la^+)'/\mu',\nu/\omega)$. Under this correspondence the
tableau $T^+$ is the row reading of $f$, and vice versa, $f$ is given by the repositioning maps $Rpos:\mathcal{D}^+ \to \mathcal{D}^-$ corresponding to the tableau $T^+$.
\end{pr}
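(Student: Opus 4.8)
The plan is to route everything through Lemma \ref{van}, which already characterizes the row readings of pictures $f:(\la^+)'/\mu'\to\nu/\omega$ as exactly the tableaux of shape $(\la^+)'/\mu'$ that are simultaneously semistandard and shifted Yamanouchi. First I would record the trivial relabeling identifying a marked tableau $T^+$, whose entries lie in $\{m+1,\dots,m+n\}$, with the tableau obtained by subtracting $m$ from every entry, whose values are then the target-row indices $\{1,\dots,n\}$; under this relabeling the content $(0|\nu/\omega)$ of $T^+$ matches the requirement that $f$ be a bijection onto $[\nu/\omega]$, since each target row $i$ then receives exactly $\nu_i-\omega_i$ cells. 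By the definition of marked together with Proposition \ref{symmetry}, the set $M((\la^+)'/\mu',\nu/\omega)$ consists precisely of the semistandard, shifted Yamanouchi tableaux of shape $(\la^+)'/\mu'$ and content $(0|\nu/\omega)$. Combined with Lemma \ref{van}, this shows that the image of the row-reading map and the set $M((\la^+)'/\mu',\nu/\omega)$ coincide, so it remains to promote this equality of images to a genuine bijection and to identify that bijection with $Rpos$.

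Next I would establish that the row-reading map is injective, i.e. that a picture is recovered from its row reading $E^+$. The cells of $\mathcal{D}^+$ that $f$ sends into a fixed target row $i$ are exactly those carrying the entry $i$; what remains is to pin down the column each such cell occupies. I would argue that the two picture axioms (compatibility of $\leq_{\nwarrow}$ with $\leq_{\swarrow}$ in both directions) force these cells, listed in the total order $\leq_r$ on $\mathcal{D}^+$, to fill the cells of row $i$ of $\nu/\omega$ from left to right. Since $\leq_r$ refines $\leq_{\swarrow}$ and is precisely the order in which the reading word $w(T^+)$ visits the cells, the second coordinates of $f$ are completely determined by $E^+$. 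This yields injectivity, hence a bijection between $Pict((\la^+)'/\mu',\nu/\omega)$ and $M((\la^+)'/\mu',\nu/\omega)$.

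Finally I would verify that this bijection is exactly the repositioning map. Directly from the definition of $Rp$ in Subsection \ref{synch}, the $j$-th appearance of the symbol $m+i$ in $w(T^+)$ is assigned the cell $[i,\la^-_i+j]=[i,\omega_i+j]$ of $\mathcal{D}^-$; this is precisely the left-to-right, reading-word filling of target row $i$ described in the previous step. Hence $Rpos:\mathcal{D}^+\to\mathcal{D}^-$ coincides with the unique picture $f$ whose row reading is $T^+$: its first coordinate at a cell $[k,i]$ is the index $j$ for which $t^+_{k,i}=m+j$, so $T^+$ is the row reading of $f$ up to the relabeling above, and conversely $f$ is reconstructed from $Rpos$. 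To close the loop I would observe that the picture axioms for $Rpos$ are already contained in the earlier lemmas: Lemma \ref{order} (together with $T^-$ anti-semistandard) and Lemmas \ref{rowsdecrease} and \ref{strict} are exactly the monotonicity statements demanded by the two axioms, with their converses supplied by Proposition \ref{symmetry}.

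The main obstacle I anticipate is the second step: matching the order in which $w(T^+)$ enumerates the cells mapping to a given target row with the order the picture axioms impose on the corresponding target columns, and checking that this makes $Rpos$ the \emph{unique} picture with the prescribed row reading rather than merely \emph{a} picture with the right first coordinates. This is the point at which the conventions of \cite{zel} and \cite{van1} (which, as noted after Lemma \ref{van}, differ from ours by transposing the domain and image sides) must be reconciled carefully; once the conventions are aligned, the identification of $Rpos$ with the inverse of the row-reading map is forced.
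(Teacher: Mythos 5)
Your proposal is correct and follows essentially the same route as the paper's proof, which simply combines Lemma \ref{van} with Proposition \ref{symmetry}. Your extra steps verifying that the row-reading map is injective and that the resulting picture coincides with $Rpos$ (via the reading-word order $\leq_r$ filling each target row left to right) supply details the paper leaves implicit but are entirely consistent with its argument.
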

\begin{proof}
By Lemma \ref{van} we know that pictures $f:(\la^+)'/\mu' \to \nu/\omega$ correspond to tableau $T^+$ of the shape $(\la^+)'/\mu'$, which are semistandard and shifted Yamanouchi. 

Using Proposition \ref{symmetry}, we conclude that such $T^+$ is the row ordering of a picture $f:(\la^+)'/\mu' \to \nu/\omega$ if and only if
$T^+\in M((\la^+)'/\mu',\nu/\omega)$.
\end{proof}

Note that Proposition \ref{symmetry} allows us to characterize a picture $f:(\la^+)'/\mu' \to \nu/\omega$ in a completely symmetric way, using marked tableau $T^+$. 
The symmetry is given by the requirement that $T^+$ is semistandard and $T^-$ is anti-semistandard. Usually pictures are characterized using lattice permutations 
(see Proposition 2.6.1 of \cite{van1} and the remark following it) and the description does not seem symmetric. With our approach, we are using a pair of tableaux - semistandard $T^+$
and anti-semistandard $T^-$ connecting using the repositioning map $Rp$. It is the repositioning map $Rp$ that provides the connection betwen $T^+$ and $T^-$, namely $Rp(T^+)=T^-$, 
and the specific definition of $Rp$ we are using is related to the lattice condition. 

An immediate consequence of Proposition \ref{corresp} is the following result.

\begin{pr}
In the notation as above we have 
\[C^{\la'}_{\mu' \nu}=\sum_{\kappa\in \mathcal{P}} C^{(\la^+)'}_{\mu',\kappa} C^{\nu}_{\omega, \kappa}.\]
\end{pr}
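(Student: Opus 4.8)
The plan is to read the claimed identity as an equality between a representation-theoretic multiplicity (the left-hand side) and a purely combinatorial picture count (the right-hand side), and to bridge the two by a chain of three equalities of cardinalities. Nothing here requires new computation; everything follows by assembling Theorem \ref{con1}, Proposition \ref{corresp}, and the classical evaluation of the number of pictures.

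First I would identify the left-hand side with a count of marked tableaux. By Theorem \ref{con1} the vectors $\overline{v}(T^+)$ for $T^+\in M((\la^+)'/\mu',\nu/\omega)$ form a basis of the space of even-primitive vectors of weight $(\mu|\nu)$ in $\nabla(\la)$, while by \eqref{HS} (Theorem 6.11 of \cite{br}) the dimension of that space is exactly the Littlewood--Richardson coefficient $C^{\la'}_{\mu'\nu}$. Hence $|M((\la^+)'/\mu',\nu/\omega)|=C^{\la'}_{\mu'\nu}$. Next I would apply Proposition \ref{corresp}, which gives a bijection between $M((\la^+)'/\mu',\nu/\omega)$ and the set $Pict((\la^+)'/\mu',\nu/\omega)$, so that $|M((\la^+)'/\mu',\nu/\omega)|=|Pict((\la^+)'/\mu',\nu/\omega)|$. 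Thus the left-hand side already equals the number of pictures from $(\la^+)'/\mu'$ to $\nu/\omega$.

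The final step is the classical evaluation of the number of pictures between two skew shapes, due to Zelevinsky \cite{zel} (see also \cite{van1}): for skew partitions $\alpha/\beta$ and $\gamma/\delta$ one has $|Pict(\alpha/\beta,\gamma/\delta)|=\sum_{\kappa\in\mathcal{P}} C^{\alpha}_{\beta\kappa}C^{\gamma}_{\delta\kappa}$. Specializing to $\alpha/\beta=(\la^+)'/\mu'$ and $\gamma/\delta=\nu/\omega$ produces exactly $\sum_{\kappa\in\mathcal{P}} C^{(\la^+)'}_{\mu'\kappa}C^{\nu}_{\omega\kappa}$, the right-hand side. Concatenating the three equalities gives $C^{\la'}_{\mu'\nu}=\sum_{\kappa\in\mathcal{P}} C^{(\la^+)'}_{\mu'\kappa}C^{\nu}_{\omega\kappa}$, as claimed.

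The only nonroutine ingredient is the picture-counting formula invoked in the last step. I expect the main obstacle to be bookkeeping of conventions rather than genuine difficulty: as remarked after Lemma \ref{van}, the definitions of picture and of the order $\leq_{\swarrow}$ used here (following \cite{zel}) differ from those of \cite{van1} by a transposition of the domain and image, so one must verify that our orientation conventions are the ones under which the count $\sum_{\kappa} C^{\alpha}_{\beta\kappa}C^{\gamma}_{\delta\kappa}$ is stated. If a self-contained derivation is preferred over a citation, I would instead obtain the factorization by grouping pictures according to the common Littlewood--Richardson content $\kappa$ rectified from $\alpha/\beta$ on the domain side and from $\gamma/\delta$ on the image side, the row reading $E^+$ (semistandard and shifted Yamanouchi by Lemma \ref{van}) recording the first factor and the column reading $E^-$ (anti-semistandard by Proposition \ref{symmetry}) the second.
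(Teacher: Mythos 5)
Your proposal is correct and follows exactly the paper's own argument: Theorem \ref{con1} together with (\ref{HS}) identifies $C^{\la'}_{\mu'\nu}$ with the cardinality of $M((\la^+)'/\mu',\nu/\omega)$, Proposition \ref{corresp} identifies that set with $Pict((\la^+)'/\mu',\nu/\omega)$, and Theorem 1 of \cite{zel} evaluates the number of pictures as $\sum_{\kappa}C^{(\la^+)'}_{\mu',\kappa}C^{\nu}_{\omega,\kappa}$. The paper cites the same three ingredients in the same order, so no further comment is needed.
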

\begin{proof}
By Theorem \ref{con1}, the cardinality of $M((\la^+)'/\mu',\nu/\omega)$ equals the multiplicity of even-primitive vectors of weight 
$(\mu|\nu)$ in $\nabla(\la)$. It follows from (\ref{HS}) that this multiplicity equals $C^{\la'}_{\mu' \nu}$.

On the other hand, the cardinality of the set $Pict((\la^+)'/ \mu',\nu/\omega)$ equals
\[\sum_{\kappa\in \mathcal{P}} C^{(\la^+)'}_{\mu',\kappa} C^{\nu}_{\omega, \kappa}\] by Theorem 1 of \cite{zel}.

Proposition \ref{corresp} concludes the proof.
\end{proof}

\end{document}